\newtheorem{theorem}{Theorem}[section]
\newtheorem{proposition}[theorem]{Proposition}
\newtheorem{lemma}[theorem]{Lemma}
\newtheorem{corollary}[theorem]{Corollary}
\newtheorem{remark}[theorem]{Remark}
\newtheorem{conjecture}[theorem]{Conjecture}
\theoremstyle{definition}
\newtheorem{definition}[theorem]{Definition}
\newcommand{\pushoutcorner}[1][dr]{\save*!/#1+1.2pc/#1:(1,-1)@^{|-}\restore}
\newcommand{\pullbackcorner}[1][dr]{\save*!/#1-1.2pc/#1:(-1,1)@^{|-}\restore}
\newcommand{\cat}[1]{\mathbb{#1}}
\newcommand{\catc}{\cat{C}}
\newcommand{\set}{\mathbf{Set}}
\newcommand{\asm}{\mathbf{Asm}}
\newcommand{\btwo}{\mathbf{2}}
\newcommand{\smcat}[1]{\mathcal{#1}}
\newcommand{\cod}{\operatorname{cod}}
\newcommand{\intv}{\mathbb{I}}
\newcommand{\names}{\mathbb{A}}
\newcommand{\cons}{\mathtt{cons}}
\newcommand{\concat}{\mathtt{trans}}
\newcommand{\mrgl}{\mathtt{reduceleft}}
\newcommand{\mrgr}{\mathtt{reduceright}}
\newcommand{\ext}{\mathtt{extn}}
\newcommand{\nat}{\mathbb{N}}
\newcommand{\realno}{\mathbb{R}}
\newcommand{\klone}{\mathcal{K}_1}
\newcommand{\kltwo}{\mathcal{K}_2}
\newcommand{\czf}{\mathbf{CZF}}
\newcommand{\wisc}{\mathbf{WISC}}
\newcommand{\yoneda}{\mathbf{y}}
\newcommand{\im}{\operatorname{im}}
\newcommand{\eff}{\mathcal{E}ff}
\newcommand{\kv}{\mathcal{KV}}
\newcommand{\rt}{\mathbf{RT}}
\newcommand{\gn}[1]{\ulcorner #1 \urcorner}
\newcommand{\dm}{\mathbf{dM}}
\title{$W$-Types with Reductions and the Small Object Argument}
\author{Andrew W Swan}
\begin{document}

\maketitle

\begin{abstract}
  We define a simple kind of higher inductive type generalising
  dependent $W$-types, which we refer to as \emph{$W$-types with
    reductions}. Just as dependent $W$-types can be characterised as
  initial algebras of certain endofunctors (referred to as
  \emph{polynomial endofunctors}), we will define our generalisation
  as initial algebras of certain pointed endofunctors, which we will
  refer to as \emph{pointed polynomial endofunctors}.

  We will show that $W$-types with reductions exist in all $\Pi
  W$-pretoposes that satisfy a weak choice axiom, known as
  \emph{weakly initial set of covers} ($\wisc$). This includes all
  Grothendieck toposes and realizability toposes as long as $\wisc$ holds
  in the background universe.

  We will show that a large class of $W$-types with reductions in
  internal presheaf categories can be constructed without using
  $\wisc$.

  We will show that $W$-types with reductions suffice to construct some
  interesting examples of algebraic weak factorisation
  systems (awfs's). Specifically, we will see how to construct awfs's
  that are cofibrantly generated with respect to a codomain
  fibration, as defined in a previous paper by the author.
\end{abstract}

\section{Introduction}
\label{sec:introduction}

A key idea in type theory is that of inductively generated types. The
essential idea is that one specifies a way to construct new elements
of a type from old, and an inductively generated type is the ``least''
type matching this specification. The simplest example is the natural
numbers, $\nat$. It is the type inductively generated by the
requirements that $0$ is an element of $\nat$ and $S(n)$ is an
element of $\nat$ whenever $n$ is. Since $\nat$ is the least such
type, we can prove a formula $\varphi$ holds for all natural numbers $n$,
by first proving $\varphi$ for $0$, then showing $\varphi$ holds for
$S(n)$ whenever it holds for $n$.

An important class of inductive types is that of $W$-types. These have
elegant categorical semantics due to Moerdijk and Palmgren
\cite{moerdijkpalmgrenwtypes}, and later developed further to
dependent $W$-types by Gambino and Hyland \cite{gambinohylanddepw}. In
these semantics, $W$-types are implemented as initial algebras of a
certain class of endofunctors, known as \emph{polynomial
  endofunctors}. Type theoretically the idea (for the simpler non
dependent case) is that we are given a
type $Y$ that we refer to as \emph{constructors} and a family of types
$X_y$ indexed by the elements of $y$, which we refer to as
\emph{arities}. We then construct a type $W$,
which contains an element of the form $\sup(y, \alpha)$ whenever $y
\in Y$ and $\alpha \colon X_y \rightarrow W$.

Higher inductive types are one of the main ideas in homotopy type
theory \cite{hottbook}, in which one defines a new type by specifying
not only how to construct elements of a type, but also how to
construct proofs of equality between elements (and also proofs of
equality between proofs of equality, etc). A lot of the time the aim
here is to construct types with nontrivial higher type structure that
represent interesting topological spaces (such as $n$ dimensional
spheres) type theoretically. However, there are examples of higher
inductive types that are non trivial even when working in an
extensional setting, where
UIP holds (any two proofs of equality are equal).
Many years before the term ``higher inductive
type'' was even coined, it was known that free algebras can be
constructed for (infinitary) varieties, and as observed by Blass, this
can even be carried out internally in a topos with a natural numbers
object satisfying the internal axiom of choice \cite[Section
8]{blassfreealg}. As observed by Lumsdaine and Shulman in the
introduction to
\cite{lumsdaineshulmanhit}, this can now be viewed as a
kind of higher inductive type.
More recently, in \cite{acdfqiit}
Altenkirch, Capriotti, Dijkstra and Forsberg developed a class of higher
inductive types, which they call \emph{quotient inductive-inductive
  types} which also
have interesting structure even within extensional type theory.
See also the
earlier work on \emph{quotient inductive types} by Altenkirch and
Kaposi in \cite{altenkirchkaposiqit}.

We will develop an idea for a simple kind of higher inductive type
that we will call $W$-type with reductions. Essentially, we identify
$\sup(y, \alpha)$ with some of the elements $\alpha(x)$ used to
construct it.

Although $W$-types with reductions are relatively simple, we will see
that they have an interesting application in homotopical algebra and
the semantics of homotopy type theory. A well known construction in
homotopical algebra is Garner's small object argument
\cite{garnersmallobject}, in which a cofibrantly generated algebraic
weak factorisation system (awfs) is constructed, making essential use
of transfinite colimits. In an earlier paper \cite{swanliftprob} the
author defined a new generalised definition of cofibrantly generated
within a Grothendieck fibration, and showed that to construct a
cofibrantly generated awfs in this new sense, it suffices to show that
certain pointed endofunctors have initial algebras. We will show that
when working over the codomain fibration for a locally cartesian
closed category, these initial algebras can be seen as $W$-types
with reductions. This will then be used to construct some interesting,
previously unknown examples of awfs's.

$W$-types with reductions may turn out to be
special cases of free algebras for varieties and/or QIITs,
and just like with those they are non trivial
even when working in extensional type theory. Indeed throughout this
paper we will be working with locally cartesian closed categories
which we think of as models for extensional type theory.
However, the relative simplicity of $W$-types with
reductions will have some important advantages. We will show how the
semantics for dependent $W$-types can be generalised to also give us
semantics for $W$-types with reductions. We will then show that
$W$-types with reductions can be implemented in any $\Pi W$-pretopos
satisfying a weak choice axiom known as $\wisc$ (such categories are
sometimes referred to as \emph{predicative toposes}
\cite{vdbergpredtop}).
An interesting
aspect of this is that currently approaches to the semantics of higher
inductive types such as the work of Lumsdaine and Shulman in
\cite{lumsdaineshulmanhit} use transfinite colimits for the
construction of the underlying objects. On the other hand, there are
interesting examples of predicative toposes based on realizability
that do not have infinite colimits, that we will see in section
\ref{sec:exampl-prev-unkn}.
The key is that we will construct the types within the
internal logic of the predicative topos using $W$-types.

The main focus of this paper is on semantics, in the same spirit
as Gambino and Hyland in \cite{gambinohylanddepw}. We will,
however give an intuitive explanation of what $W$-types with
reductions look like in the internal logic of a $\Pi W$-pretopos,
which will suggest what a syntax for $W$-types with reductions might
look like.

\subsection{On Internal Languages for Locally Cartesian Closed
  Categories}
\label{sec:note-use-internal}

Throughout this paper we will use type theoretic notation for objects
in a locally cartesian closed category, and type theory style
arguments for some of the proofs. Often, given a map $f \colon X
\rightarrow Y$ we will think of it as a family of types
indexed by $Y$, written as $X_y$ or $X(y)$. This is justified
by the well known paper by Seely \cite{seelylcctt}, although
strictly speaking, in order to really interpret extensional type
theory one needs the later work by Hofmann in \cite{hofmannlcc}.

One can also add disjoint coproducts, propositional truncation and
effective quotients to the type theory, as long as the locally
cartesian closed category possesses the appropriate structure. See
e.g. the work of Maietti in \cite{maiettimodular}.

Furthermore, as shown by Moerdijk and Palmgren $W$-types in type
theory correspond closely to the categorical definition that we will
use here. See \cite{moerdijkpalmgrenwtypes} for more details.

In \cite[Remark 5.9]{moerdijkpalmgrenwtypes} Moerdijk and Palmgren
point out a subtle issue to bear in mind when working with
$W$-types. If we are constructing a map from a $W$-type, $W$ to an
object $A$, then it is very straightforward to convert an
argument by recursion in type theory into a direct argument using the
initial algebra property of $W$. However, sometimes in proofs we want
to construct a predicate on $W$ by induction. In this case there is
not a straightforward way to interpret such arguments in an arbitrary
locally cartesian closed category. However, as Moerdijk and Palmgren
show in \cite{moerdijkpalmgrenast1}, such arguments can be interpreted
in the richer structure of a \emph{stratified pseudotopos}, and that
many natural examples of $\Pi W$-pretoposes possess this additional
structure. In this paper we will sometimes see such arguments, since
they are often the most natural and easy to understand
proofs. However, our results do apply to arbitrary locally cartesian
categories and we will also include brief explanations of how the
proofs can be adapted to work in general.

\section{$W$-Types with Reductions}
\label{sec:w-types-with}

\subsection{Definition}

We recall from \cite{gambinohylanddepw} that Gambino and Hyland
defined the following notions of polynomial, dependent polynomial
endofunctor and dependent $W$-type, which we will
generalise.
Throughout we assume that we are given a locally cartesian
closed and finitely cocomplete category $\catc$.

\begin{definition}[Gambino and Hyland]
  \label{def:depwtype}
  A \emph{polynomial} is a diagram of the following form.
  \begin{equation*}
    \begin{gathered}
      \xymatrix { & X \ar[r]^f \ar[dl]_h & Y \ar[dr]^g \\
        Z & & & Z}    
    \end{gathered}
  \end{equation*}

  A \emph{dependent polynomial endofunctor} is an endofunctor
  $\catc/Z \rightarrow \catc/Z$ of the form $\Sigma_g \Pi_f h^\ast$,
  where $g$, $f$ and $h$ are as above. We denote
  this endofunctor as $P_{f, g, h}$.

  A \emph{dependent $W$-type} is an initial object in the category of
  $P_{f, g, h}$-algebras for some dependent polynomial endofunctor
  $P_{f, g, h}$.
\end{definition}

We now give the new more general definition of polynomial with
reductions and pointed polynomial endofunctor with reductions.

\begin{definition}
  \label{def:polyred}
  Suppose we are given maps $f, g, h$ and $r$ as in the following
  diagram.
  \begin{equation}
    \label{eq:27}
    \begin{gathered}
      \xymatrix { R \ar[r]^k & X \ar[r]^f \ar[dl]_h & Y \ar[dr]^g \\
        Z & & & Z}    
    \end{gathered}
  \end{equation}
  We say the diagram is \emph{coherent}, or satisfies the
  \emph{coherence condition} if $g \circ f \circ k = h \circ k$.

  We say that a diagram as in \eqref{eq:27} satisfying the coherence
  condition is a \emph{polynomial with reductions}.

  We refer to the subdiagram consisting of $f$, $g$ and $h$ as the
  \emph{underlying polynomial}, and to $R$ and $k$ as the
  \emph{reductions}.
\end{definition}

\begin{proposition}
  Polynomials in the sense of definition \ref{def:depwtype} correspond
  precisely to polynomials with reductions where $R$ is the initial
  object in $\catc$.
\end{proposition}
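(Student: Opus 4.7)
The plan is to exhibit a bijection between the two classes of data witnessed in both directions by essentially trivial uniqueness arguments. Given a polynomial with reductions whose reduction object $R$ is initial in $\catc$, we forget $R$ and $k$ to obtain an underlying polynomial $(h, f, g)$ in the sense of Definition \ref{def:depwtype}. Conversely, given such a polynomial we set $R$ to be the initial object $0$ of $\catc$ and take $k \colon 0 \to X$ to be the unique map from the initial object.

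The key observations are first that, since $0$ is initial, there is exactly one morphism $0 \to X$, so the extension of a polynomial to a polynomial with reductions with $R = 0$ is unique; and second, that the coherence condition $g \circ f \circ k = h \circ k$ is automatically satisfied in this case, because both sides are morphisms $0 \to Z$ and there is only one such morphism. This shows the extension to a coherent diagram exists and is unique, so the two assignments are mutually inverse on the nose.

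There is no real obstacle here: the argument rests entirely on the universal property of the initial object and requires no further structure on $\catc$ beyond the existence of $0$, which is guaranteed by our standing assumption that $\catc$ is finitely cocomplete.
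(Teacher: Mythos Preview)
Your proposal is correct and follows exactly the same approach as the paper: the paper's proof simply notes that the coherence condition is vacuous when $R$ is initial and declares the rest obvious, whereas you have spelled out the bijection and the reason the coherence condition holds automatically. There is nothing to add.
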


\begin{proof}
  We draw attention to the fact that the coherence condition is
  vacuous when $R$ is initial. Aside from this it is obvious.
\end{proof}

\begin{definition}
  Suppose we are given a polynomial with reductions as in definition
  \ref{def:polyred}.
  
  We construct a pointed endofunctor $P_{f,g,h,k}$ as follows.

  Note that the coherence conditions gives us the isomorphism
  (equality, in fact) $\Sigma_g \Sigma_f \Sigma_k \cong \Sigma_h
  \Sigma_k$. We construct
  a map $\Sigma_h \Sigma_k k^\ast f^\ast \Pi_f h^\ast \rightarrow \operatorname{Id}_{\catc/Z}$
  as follows. Note that we have an evaluation map
  $f^\ast \Pi_f \rightarrow \operatorname{Id}_{\catc/Z}$ in $\catc/X$ (which is just the counit
  of the adjunction $f^\ast \dashv \Pi_f$). We also have a map
  $\Sigma_k k^\ast \rightarrow \operatorname{Id}_{\catc/Z}$ over $X$ given by the counit of the
  adjunction $\Sigma_k \dashv k^\ast$ (which recall is just one of the
  projection maps in the pullback). We have a similar such map for
  $h$. We put these together in the following composition:
  \begin{equation*}
    \Sigma_h \Sigma_k k^\ast f^\ast \Pi_f h^\ast \longrightarrow
    \Sigma_h \Sigma_k k^\ast h^\ast \longrightarrow
    \Sigma_h h^\ast \longrightarrow
    \operatorname{Id}_{\catc/Z}
  \end{equation*}

  Again using the counits of $\Sigma$ and pullback adjunctions we get
  a composition
  \begin{equation*}
    \Sigma_g \Sigma_f \Sigma_k k^\ast f^\ast \Pi_f h^\ast
    \longrightarrow
    \Sigma_g \Sigma_f f^\ast \Pi_f h^\ast \longrightarrow
    \Sigma_g \Pi_f h^\ast
  \end{equation*}

  Finally, we combine these together to get two maps out of $\Sigma_h
  \Sigma_k k^\ast f^\ast \Pi_f h^\ast$ in $\catc/Z$ and then take the
  pushout.
  \begin{equation}
    \label{eq:30}
    \begin{gathered}
      \xymatrix{ \Sigma_h \Sigma_k k^\ast f^\ast \Pi_f h^\ast \ar[r] \ar[d]
        & \operatorname{Id}_{\catc/Z} \ar[d]
        \\
        \Sigma_g \Pi_f h^\ast \ar[r] & P_{f,g,h,k} \pushoutcorner}
    \end{gathered}
  \end{equation}
  This defines a pointed endofunctor on $\catc/Z$ with the point given
  by the right hand inclusion of the pushout.

  We will refer to pointed endofunctors defined in this way as
  \emph{pointed polynomial endofunctors}.
\end{definition}

We first note that we get in this way a generalisation of Gambino and
Hyland's notion of dependent polynomial endofunctor in the following
proposition.

\begin{proposition}
  If $R$ is an initial object, then $P_{f,g,h,k}$ is just
  $P_{f,g,h} + 1$, which is a pointed endofunctor with a category of
  algebras isomorphic to the algebras of the dependent polynomial
  endofunctor on the underlying polynomial.
\end{proposition}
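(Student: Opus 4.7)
The plan is to trace what happens to the various functorial components when $R$ is initial, and then observe that the pushout defining $P_{f,g,h,k}$ collapses to a coproduct, giving the standard "free pointed endofunctor" construction on $P_{f,g,h}$.

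First I would show that $\Sigma_k k^\ast$ is the constant functor at the initial object of $\catc/X$. Since $R$ is initial in $\catc$, any object over $R$ must itself be initial (a morphism to an initial object exists only from initial objects, in a category where the initial object is strict, which holds in any locally cartesian closed category). Hence for any $A \to X$, the pullback $k^\ast(A \to X) = A \times_X R \to R$ has initial domain, and then $\Sigma_k$ of this is $A \times_X R \to X$, which is the initial object of $\catc/X$. Composing with $\Sigma_h$, which preserves colimits and hence initial objects, I conclude that the whole functor $\Sigma_h \Sigma_k k^\ast f^\ast \Pi_f h^\ast$ is the constant functor at the initial object $0$ of $\catc/Z$.

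Next I would compute the pushout \eqref{eq:30}. With the upper-left corner equal to $0$ pointwise, the two structural maps out of it are forced, and the pushout of $0 \to \operatorname{Id}_{\catc/Z}$ and $0 \to \Sigma_g \Pi_f h^\ast$ is simply their coproduct. Thus $P_{f,g,h,k} \cong \Sigma_g \Pi_f h^\ast + \operatorname{Id}_{\catc/Z} = P_{f,g,h} + \operatorname{Id}_{\catc/Z}$, and the pointing (the right-hand leg of the pushout square) is exactly the coproduct inclusion of $\operatorname{Id}_{\catc/Z}$. This is the standard free pointed endofunctor on $P_{f,g,h}$, which I shall denote $P_{f,g,h} + 1$.

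Finally, I would check the equivalence of algebra categories. A pointed algebra for $P_{f,g,h} + \operatorname{Id}_{\catc/Z}$ consists of an object $A \in \catc/Z$ together with a map $\alpha \colon P_{f,g,h}(A) + A \to A$ such that the composite $A \hookrightarrow P_{f,g,h}(A) + A \xrightarrow{\alpha} A$ is the identity. By the universal property of the coproduct, $\alpha$ decomposes as a pair consisting of a map $P_{f,g,h}(A) \to A$ and a map $A \to A$, and the pointedness condition forces the latter to be $\operatorname{id}_A$. Thus specifying a pointed $(P_{f,g,h} + \operatorname{Id})$-algebra is the same as specifying a $P_{f,g,h}$-algebra, and one checks similarly that morphisms correspond. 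The main (very mild) obstacle is just being careful that strictness of the initial object is available so that pulling back along $k$ genuinely produces initial objects; this is automatic in any locally cartesian closed category, so nothing further is required.
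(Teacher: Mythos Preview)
The paper states this proposition without proof, treating it as an immediate observation. Your argument is correct and supplies exactly the details one would expect: strictness of the initial object in a locally cartesian closed category forces the upper-left corner of the defining pushout \eqref{eq:30} to be pointwise initial, so the pushout degenerates to a coproduct, and the resulting pointed endofunctor $P_{f,g,h} + \operatorname{Id}_{\catc/Z}$ has its pointed-algebra category identified with ordinary $P_{f,g,h}$-algebras via the standard decomposition of maps out of a coproduct.
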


\begin{definition}
  Let $f, g, h, k$ be a polynomial with reductions. We refer to the
  initial object of the category of $P_{f, g, h, k}$-algebras (if it
  exists) as the \emph{$W$-type with reductions} on $f, g, h, k$.
\end{definition}

\begin{proposition}
  If $R$ is initial, then the $W$-type with reductions is just the
  dependent $W$-type on the underlying polynomial.
\end{proposition}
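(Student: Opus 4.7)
The plan is to deduce the statement directly from the preceding proposition. That proposition already establishes that, when $R$ is initial, the category of $P_{f,g,h,k}$-algebras is isomorphic to the category of $P_{f,g,h}$-algebras. Since isomorphisms of categories preserve and reflect initial objects, the initial object on the left, which by definition is the $W$-type with reductions on $(f,g,h,k)$, corresponds to the initial object on the right, which is the dependent $W$-type on the underlying polynomial $(f,g,h)$. So the conclusion is essentially immediate.

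The only step requiring some attention is unpacking why the two initial objects really do go to one another under this identification. Here I would invoke the standard fact that pointed algebras for $F + 1$ (with point the coproduct inclusion from the identity functor) coincide with plain $F$-algebras: a pointed algebra structure $FA + A \to A$ whose restriction along the identity summand is $\mathrm{id}_A$ is determined entirely by its other component $FA \to A$, and this correspondence is functorial in algebra morphisms. Applied with $F = P_{f,g,h}$, this yields the required isomorphism of categories, and initial objects on the two sides match up canonically.

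I do not anticipate any real obstacle: the substantive content has already been dealt with in the previous proposition, and the present statement amounts to transporting initial objects along the isomorphism of algebra categories. If one wished to be fully explicit, one could verify that the $W$-type with reductions $W$ equipped with its structure map factors, via the pushout defining $P_{f,g,h,k}$, into a map of the form $P_{f,g,h}(W) \to W$, and that this recovers exactly the initial $P_{f,g,h}$-algebra structure on $W$.
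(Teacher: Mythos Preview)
Your proposal is correct and matches the paper's approach: the paper states this proposition without proof, treating it as an immediate consequence of the preceding proposition (that when $R$ is initial, $P_{f,g,h,k} \cong P_{f,g,h} + 1$ with isomorphic algebra categories). Your argument simply makes explicit the one-line justification the paper leaves implicit, namely that isomorphisms of categories carry initial objects to initial objects.
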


\subsection{A Formulation in the Internal Language of a Category}
\label{sec:form-intern-lang}

We will often work in the internal logic of $\catc$. In this case it
is useful to reformulate the definition in a more intuitive way as
follows. We will view $g \colon Y \rightarrow Z$ as a family of types
$Y_z$ indexed by $z \in Z$, and $f \colon X \rightarrow Y$ as a family
of types $X_{z, y}$ indexed by $z \in Z$ and $y \in Y_z$. We view $k$
as a family of types $R_{z, y, x}$ for $x \in X_{z, y}$.

We refer to $Y_z$ as the \emph{constructors} over $z \in Z$. For
$y \in Y_z$, we refer to $X_{z,y}$ as the \emph{arity} of the
constructor $y$. We will refer to the map $h \colon X \rightarrow Z$
as the \emph{reindexing map}.

Suppose we are given a family $(W_z)_{z \in Z}$ over $Z$. Now we can
reformulate the pointed polynomial endofunctor with reductions at $W$
as the following pushout using type theoretic notation as below.
\begin{equation*}
  \begin{gathered}
    \xymatrix{ \Sigma_{z : Z} \Sigma_{y : Y(z)} \Sigma_{r :
        R(y)} \Pi_{x : X(y)} W(h(x)) \ar[r] \ar[d] & W \ar[d]
      \\
      \Sigma_{z : Z} \Sigma_{y : Y(z)} \Pi_{x : X(y)} W(h(x))
      \ar[r] & P_{f,g,h,k}(W) \pushoutcorner}
  \end{gathered}
\end{equation*}

Then note that by the universal property of the pushout,
$P_{f, g, h, k}$-algebra structures on $W$ correspond precisely to
commutative triangles of the form below.
\begin{equation*}
  \begin{gathered}
    \xymatrix{ \Sigma_{z : Z} \Sigma_{y : Y(z)} \Sigma_{r :
        R(y)} \Pi_{x : X(y)} W(h(x)) \ar[drr]^{\lambda z, y, r,
        \alpha.\alpha(x)} \ar[d] & &
      \\
      \Sigma_{z : Z} \Sigma_{y : Y(z)} \Pi_{x : X(y)} W(h(x))
      \ar[rr]_c & & W}
  \end{gathered}
\end{equation*}
We can rephrase this as the following.
\begin{enumerate}
\item For each $z \in Z$, each constructor $y \in Y(z)$, and each
  element $\alpha$ of type $\Pi_{x : X(y)} W(h(x))$, we are given a
  choice of element $c(y, \alpha)$ of type $W(z)$.
\item For each $y \in Y_z$ and each $x \in X(y)$, if there exists
  $r \in R(x)$ then the equation $c(y, \alpha) = \alpha(x)$ is
  true. We refer to such equations as \emph{reduction equations} or
  just \emph{reductions}.
\end{enumerate}

\begin{remark}
  Note that the coherence condition ensures that whenever $y \in Y(z)$,
  $x \in X(y)$ and there exists $r \in R(x)$, we have $h(x) = g(f(x))$
  and so $\alpha(x)$ lies in the fibre $W(z)$, the same as
  $c(y, \alpha)$.
\end{remark}

The first part is then the same as an algebra structure over the
underlying polynomial endofunctor,
and the second part is what we gain by adding
reductions.

The $W$-type with reductions is then the object inductively generated
by the first condition subject to the equations in the second condition.
The way we combine an inductively defined type with equations in this
way is an example of a \emph{higher inductive type}. These play an
important role in homotopy type theory (see \cite{hottbook}).

In the above we only talked about $R(x)$ being inhabited, and didn't
need to depend on any particular choice of element from $R(x)$. We
justify this with the following proposition.
\begin{proposition}
  Every pointed
  polynomial endofunctor with reductions is isomorphic to one derived
  from a polynomial with reductions where
  where $k$ is monic. Moreover, given any polynomial with reductions,
  we obtain an isomorphic pointed endofunctor by replacing $k$ with
  the inclusion with its image in $X$.
\end{proposition}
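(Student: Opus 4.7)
My plan is to prove the ``moreover'' clause directly; the first claim then follows by taking any polynomial with reductions, factoring $k$ through its image, and invoking the ``moreover'' part. Given $(f, g, h, k)$, factor $k \colon R \to X$ as $R \xrightarrow{e} R' \xrightarrow{k'} X$, with $e$ a regular epi (the coequalizer of the kernel pair of $k$) and $k'$ a mono; this factorization is standard in the paper's intended settings ($\Pi W$-pretoposes and the like). The coherence condition for $(f, g, h, k')$ follows from that for $(f, g, h, k)$: from $g f k' e = g f k = h k = h k' e$ and $e$ epi, we obtain $g f k' = h k'$.

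The morphism $e$ induces a natural transformation $\tau \colon \Sigma_h \Sigma_k k^\ast \Rightarrow \Sigma_h \Sigma_{k'} (k')^\ast$ on $\catc/X$; its component at an object $A$ is the map $A \times_X R \to A \times_X R'$ obtained by pulling back $e$ along the structure map $A \to X$. Pullback functors in a locally cartesian closed category are left adjoints (to $\Pi$), so they preserve regular epimorphisms; hence each $\tau_A$ is a regular epi. A direct computation, using that the counits of $\Sigma_k \dashv k^\ast$ and $\Sigma_{k'} \dashv (k')^\ast$ are the pullback projections onto $A$ and that $\tau_A$ commutes with these projections by construction, gives $\phi_k = \phi_{k'} \circ (\tau \cdot f^\ast \Pi_f h^\ast)$ and $\psi_k = \psi_{k'} \circ (\tau \cdot f^\ast \Pi_f h^\ast)$, where $\phi_\bullet, \psi_\bullet$ denote the two legs of the pushout square \eqref{eq:30} for each polynomial with reductions.

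Since $\tau$ is pointwise epi, a pair $(w \colon W \to D,\ c \colon \Sigma_g \Pi_f h^\ast W \to D)$ satisfies $w \circ \phi_k = c \circ \psi_k$ if and only if $w \circ \phi_{k'} = c \circ \psi_{k'}$: the ``if'' direction is obtained by composing with $\tau$, and the ``only if'' direction precomposes with $\tau$ and cancels it as an epimorphism. Thus the pushouts $P_{f, g, h, k}(W)$ and $P_{f, g, h, k'}(W)$ share the same universal property, so are canonically isomorphic, naturally in $W$ and compatibly with the points. The main obstacle is the explicit verification that $\phi_k$ and $\psi_k$ factor through $\tau$ — this is naturality bookkeeping, but must be done carefully because the construction of $\phi_k, \psi_k$ chains together several counits; the image factorization of $k$ and the epimorphicity of $\tau$ are routine in the intended settings.
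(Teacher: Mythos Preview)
Your proof is correct and follows essentially the same approach as the paper: take the regular epi--mono image factorisation of $k$, observe that the induced map between the top-left corners of the two pushout squares is an epimorphism, and conclude that the two pushouts coincide. The paper is terser---it simply asserts that ``by diagram chasing the outer rectangle is also a pushout''---whereas you spell out the coherence check for $k'$, the reason $\tau$ is epi (pullback preserves regular epis in an LCC category), and the universal-property comparison, but these are exactly the details that the paper's diagram chase implicitly relies on.
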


\begin{proof}
  Recall that the image factorisation of $k$ is defined as the (unique
  up to isomorphism) factorisation of $k$ as a regular epimorphism followed
  by a monomorphism, as in the diagram below.
  \begin{equation*}
    \begin{gathered}
      \xymatrix{ R \ar[rr]^k \ar@{->>}[dr] & & X \\
        & S \ar@{>->}[ur]_{l} &}
    \end{gathered}
  \end{equation*}
  Note that this factorisation always exists since $\catc$ is locally
  cartesian closed and finitely cocomplete and therefore regular.

  The epimorphism $R \twoheadrightarrow R'$ then gives us an
  epimorphism $k^\ast f^\ast \Pi_f h^\ast \twoheadrightarrow l^\ast
  f^\ast \Pi_f h^\ast$, and so an epimorphism in the top left map
  below.
  \begin{equation*}
    \begin{gathered}
      \xymatrix{ \Sigma_h \Sigma_k k^\ast f^\ast \Pi_f h^\ast
        \ar@{->>}[r] \ar[dr]
        &
        \Sigma_h \Sigma_l l^\ast f^\ast \Pi_f h^\ast \ar[d] \ar[r] &
        \operatorname{Id}_{\catc/Z} \ar[d] \\
        & \Sigma_g \Pi_f h^\ast \ar[r] & P_{f, g, h, l} \pushoutcorner}
    \end{gathered}
  \end{equation*}
  However, now by diagram chasing the outer rectangle is also a
  pushout, and so $P_{f, g, h, k} \cong P_{f, g, h, l}$.
\end{proof}

\subsection{Coproducts of Pointed Polynomial Endofunctors with
  Reductions}
\label{sec:copr-point-polyn}

In \cite[Section 5]{gambinohylanddepw}, Gambino and Hyland observe
that under suitable conditions, the class of dependent polynomial
endofunctors over a fixed object $Z$ is closed under coproduct. We
will now show the analogous result when reductions are added. Note
that since we are now working with pointed endofunctors, the
appropriate notion of coproduct is the coproduct in the category of
pointed endofunctors, which appears in the category of endofunctors as
pushout along the units of the pointed endofunctors.

\begin{proposition}
  \label{prop:ppercoprod}
  Suppose that $\catc$ is a finitely cocomplete locally cartesian
  closed category with disjoint coproducts.\footnote{It's useful to
    note that every such category is extensive, as a corollary of
    \cite[Proposition 2.14]{carbonilackwalters}.}  Then the class of
  pointed polynomial endofunctors over a fixed object $Z$ is closed
  under coproduct.
\end{proposition}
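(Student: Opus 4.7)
The plan is to construct a new polynomial with reductions whose associated pointed polynomial endofunctor realises the coproduct, simply by taking disjoint coproducts of the defining data. Given polynomials with reductions $(f_i, g_i, h_i, k_i)$ for $i = 1, 2$ over $Z$, I would set $Y := Y_1 + Y_2$, $X := X_1 + X_2$, $R := R_1 + R_2$ with induced maps $f := f_1 + f_2$, $g := g_1 + g_2$, $h := h_1 + h_2$, $k := k_1 + k_2$. Coherence of the two summands immediately yields coherence for this coproduct diagram. The claim is that the resulting pointed polynomial endofunctor $P_{f, g, h, k}$ is canonically isomorphic, as a pointed endofunctor, to the coproduct $P_1 \sqcup_{\operatorname{Id}} P_2$ of the original two.

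The coproduct of pointed endofunctors is by definition the pushout of $P_1 \leftarrow \operatorname{Id} \to P_2$, and unfolding the fact that each $P_i$ is itself a pushout of the form in \eqref{eq:30}, this may be rewritten as a single pushout
\[
\xymatrix{
E_1 + E_2 \ar[r] \ar[d] & \operatorname{Id}_{\catc/Z} \ar[d] \\
D_1 + D_2 \ar[r] & P_1 \sqcup_{\operatorname{Id}} P_2 \pushoutcorner
}
\]
where $E_i := \Sigma_{h_i} \Sigma_{k_i} k_i^\ast f_i^\ast \Pi_{f_i} h_i^\ast$ and $D_i := \Sigma_{g_i} \Pi_{f_i} h_i^\ast$. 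It therefore suffices to exhibit natural isomorphisms $\Sigma_g \Pi_f h^\ast \cong D_1 + D_2$ and $\Sigma_h \Sigma_k k^\ast f^\ast \Pi_f h^\ast \cong E_1 + E_2$ that are compatible with the structure maps of the respective pushout squares, including the unit of the pointed endofunctor.

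The main work is in verifying these functor isomorphisms, which reduces to showing that each building block of a pointed polynomial endofunctor respects the coproduct decomposition. Pullback preserves colimits, so $(h_1 + h_2)^\ast$ decomposes under the equivalence $\catc/(X_1 + X_2) \simeq \catc/X_1 \times \catc/X_2$ provided by extensivity, and similarly for $f^\ast$ and $k^\ast$. The $\Sigma$ functors are merely postcomposition and so respect the decompositions tautologically. The delicate ingredient, which I expect to be the main obstacle, is showing that $\Pi_{f_1 + f_2}$ also decomposes, i.e.\ that for $(B_1, B_2) \in \catc/X_1 \times \catc/X_2$ one has $\Pi_{f_1 + f_2}(B_1 + B_2) \cong \Pi_{f_1} B_1 + \Pi_{f_2} B_2$ in $\catc/(Y_1 + Y_2)$. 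I would deduce this by checking the universal property of $\Pi$ pointwise using extensivity: for any generalised element of $Y_1 + Y_2$, its fibre under $f_1 + f_2$ lies entirely in the corresponding $X_i$, so sections correspond componentwise. Once these isomorphisms are in place, naturality and compatibility of the structure maps reduce to routine diagram chasing, using the footnote to ensure that the hypotheses guarantee extensivity.
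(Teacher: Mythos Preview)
Your proposal is correct and follows essentially the same approach as the paper: form the coproduct polynomial with reductions, invoke the decomposition $\Sigma_{[g_1,g_2]}\Pi_{f_1+f_2}[h_1,h_2]^\ast \cong \Sigma_{g_1}\Pi_{f_1}h_1^\ast + \Sigma_{g_2}\Pi_{f_2}h_2^\ast$ (and its analogue for the reduction part), and then use a pushout diagram chase to identify the resulting pointed endofunctor with the coproduct $P_1 \sqcup_{\operatorname{Id}} P_2$. The paper simply cites the Gambino--Hyland argument for the $\Pi_{f_1+f_2}$ decomposition rather than spelling out the extensivity argument you give, and runs the diagram chase in the opposite direction (decomposing the single pushout into three), but the content is the same.
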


\begin{proof}
  Suppose we are given two diagrams as below.
  \begin{equation*}
    \begin{gathered}
      \xymatrix { R_1 \ar[r]^{k_1} & X_1 \ar[r]^{f_1} \ar[dl]_{h_1} &
        Y_1 \ar[dr]^{g_1} \\
        Z & & & Z}    
    \end{gathered}
    \begin{gathered}
      \xymatrix { R_2 \ar[r]^{k_2} & X_2 \ar[r]^{f_2} \ar[dl]_{h_2} &
        Y_2 \ar[dr]^{g_2} \\
        Z & & & Z}    
    \end{gathered}
  \end{equation*}
  Similarly to the case for dependent polynomial endofunctors, we
  combine the two diagrams using coproduct as below.
  \begin{equation}
    \label{eq:sumppe}
    \xymatrix { R_1 + R_2 \ar[r]^{k_1 + k_2} & X_1 + X_2 \ar[r]^{f_1 +
      f_2} \ar[dl]^{[h_1, h_2]} &
      Y_1 + Y_2 \ar[dr]^{[g_1, g_2]} \\
      Z & & & Z}
  \end{equation}
  Again, by the same argument as for dependent polynomial
  endofunctors, note that
  $\Sigma_{[g_1,g_2]} \Pi_{f_1 + f_2} [h_1, h_2]^\ast \cong
  \Sigma_{g_1} \Pi_{f_1} h_1^\ast + \Sigma_{g_2} \Pi_{f_2} h_2^\ast$
  and
  $\Sigma_{[h_1,h_2]} \Sigma_{k_1 + k_2} (k_1 + k_2)^\ast (f_1 +
  f_2)^\ast \Pi_{f_1 + f_2} [h_1,h_2]^\ast \cong \Sigma_{h_1}
  \Sigma_{k_1} k_1^\ast f_1^\ast \Pi_{f_1} h_1^\ast + \Sigma_{h_2}
  \Sigma_{k_2} k_2^\ast f_2^\ast \Pi_{f_2} h_2^\ast$. Writing $P_i$
  for $\Sigma_{g_i} \Pi_{f_i} h_i^\ast$ and $Q_i$ for
  $\Sigma_{h_i} \Sigma_{k_i} k_i^\ast f_i^\ast \Pi_{f_i} h_i^\ast$ for
  $i = 1, 2$, we deduce that the pointed polynomial endofunctor
  generated by \eqref{eq:sumppe} is $\operatorname{Id}_{\catc/Z}
  \rightarrow S$ in the
  following pushout.
  \begin{equation*}
    \xymatrix{ Q_1 + Q_2 \ar[r] \ar[d] & \operatorname{Id}_{\catc/Z} \ar[d] \\
      P_1 + P_2 \ar[r] & S \pushoutcorner}
  \end{equation*}
  However, a quick diagram chase verifies that $\operatorname{Id}_{\catc/Z} \rightarrow S$ is
  the map produced by the following three pushouts.
  \begin{equation*}
    \begin{gathered}
      \xymatrix{ Q_1 \ar[r] \ar[d] & \operatorname{Id}_{\catc/Z} \ar[d] \\
        P_1 \ar[r] & S_1 \pushoutcorner}
    \end{gathered}
    \quad
    \begin{gathered}
      \xymatrix{ Q_2 \ar[r] \ar[d] & \operatorname{Id}_{\catc/Z} \ar[d] \\
        P_2 \ar[r] & S_2 \pushoutcorner}
    \end{gathered}
    \quad
    \begin{gathered}
      \xymatrix{ \operatorname{Id}_{\catc/Z} \ar[r] \ar[d] & S_1 \ar[d] \\
        S_2 \ar[r] & S \pushoutcorner}
    \end{gathered}
  \end{equation*}
  We deduce that the dependent pointed polynomial endofunctor produced by
  \eqref{eq:sumppe} (given by $\operatorname{Id}_{\catc/Z} \rightarrow
  S$) is the coproduct of
  the two diagrams given, as required.
\end{proof}

\section{Constructing $W$-Types with Reductions in $\Pi W$-Pretoposes}
\label{sec:constr-init-algebr}

\subsection{Review of Small Cover Bases and $\wisc$}
\label{sec:review-small-cover}

The axiom $\wisc$ was independently noticed and studied by various
authors.

For example, it was considered by Van den Berg in
\cite{vdbergpredtop} under the name $\mathbf{AMC}$, as a weakening of
the axiom $\mathbf{AMC}$ considered by Moerdijk and Palmgren in
\cite{moerdijkpalmgrenast1}. We recall the definition below and make
some basic observations that will be used later.

\begin{definition}
  Let $\catc$ be a category. A map $f \colon B \rightarrow A$ is a
  \emph{cover} if the only subobject of $A$ that it factors through is
  $A$ itself.
\end{definition}

\begin{proposition}
  If $\catc$ is a regular category then a map $f$ is a cover if and
  only if it is a regular epimorphism.
\end{proposition}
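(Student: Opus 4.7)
The plan is to leverage the image factorisation available in any regular category, namely that every morphism $f$ factors (uniquely up to isomorphism) as $f = m \circ e$ with $m$ a monomorphism and $e$ a regular epimorphism, and that this image is the smallest subobject of the codomain through which $f$ factors.

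For the $(\Rightarrow)$ direction, assume $f \colon B \to A$ is a cover. I would apply image factorisation to obtain $f = m \circ e$ with $m \colon I \hookrightarrow A$ monic and $e$ a regular epi. Since $m$ is a subobject of $A$ through which $f$ factors, the cover hypothesis forces $m$ to be an isomorphism. Hence $f = m \circ e$ is a regular epimorphism (regular epis are closed under composition with isomorphisms).

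For the $(\Leftarrow)$ direction, assume $f$ is a regular epimorphism and suppose $f = m \circ g$ factors through some mono $m \colon A' \hookrightarrow A$. Image-factor $g = m' \circ e$ with $m'$ mono and $e$ regular epi; then $f = (m \circ m') \circ e$ is a (mono, regular epi)-factorisation of $f$. On the other hand, $f = \mathrm{id}_A \circ f$ is another such factorisation. By uniqueness of image factorisation up to isomorphism, the mono part $m \circ m'$ must be an isomorphism. Since $m$ is a split epi (via $m' \circ (m \circ m')^{-1}$) and already a mono, it is an isomorphism, so the only subobject of $A$ through which $f$ factors is $A$ itself.

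The proof is essentially bookkeeping around the image factorisation, so there is no serious obstacle; the only subtlety is the routine verification that a mono which is also a split epi is an isomorphism, and that the two (mono, regular epi)-factorisations of $f$ in the second direction really are isomorphic as factorisations (not merely as objects), which is the content of the standard uniqueness statement for images in a regular category.
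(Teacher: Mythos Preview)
Your argument is correct. The paper itself gives no proof of this proposition at all; it simply states it and moves on, treating the equivalence of covers and regular epimorphisms in a regular category as a standard fact. Your use of the (regular epi, mono) image factorisation and its essential uniqueness is exactly the textbook route, so there is nothing to compare.

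One minor remark on the $(\Leftarrow)$ direction: your detour through a second image factorisation of $g$ works, but you can shorten it. In any category, a regular epimorphism is \emph{extremal}: if $f = m \circ g$ with $m$ monic, then writing $f$ as the coequaliser of some pair $u,v$ one has $m g u = f u = f v = m g v$, hence $g u = g v$ (since $m$ is monic), so $g$ factors through $f$ by the universal property of the coequaliser, giving a section of $m$. This avoids invoking uniqueness of factorisations and the auxiliary $m'$. Either way the conclusion is the same.
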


\begin{definition}
  Suppose we are given a square of the form below.
  \begin{equation}
    \label{eq:2}
    \begin{gathered}
      \xymatrix{ D \ar[r]^q \ar[d]_g & B \ar[d]^f \\
        C \ar[r]_p & A}      
    \end{gathered}
  \end{equation}
  We say the square is \emph{covering} if both $p$ and the canonical
  map $D \rightarrow B \times_A C$ are covers.
\end{definition}

In the internal logic of the category we can think of a covering
square as follows. We think of the map $f \colon B \rightarrow A$ as a
family of types indexed by $A$, which we write $(B_a)_{a \in A}$. We
think of the map $p \colon C \rightarrow A$ as a family of types
indexed by $A$, $(C_a)_{a \in A}$, where the requirement that $p$ is a
cover says that each $C_a$ is inhabited. We then think of the map
$g \colon D \rightarrow C$ as a family of types
$(D_{a, c})_{a \in A, c \in C_a}$. Finally, the requirement that the
canonical map $D \rightarrow B \times_A C$ is a cover says that for
every $a \in A$ and $c \in C_a$ we have a surjection
$q_{a, c} \colon D_{a, c} \twoheadrightarrow B_a$. Hence such a square
is sometimes referred to as a \emph{set of covers}.

\begin{definition}
  We say that a square as in \eqref{eq:2} is \emph{collection} if the
  following holds in the internal logic\footnote{Since the statement
    involves quantifying over a class of objects we need to use stack
    semantics to phrase it in the internal language. See
    e.g. the description by Roberts in \cite[Section 2]{robertswisc}
    for details.}. For all
  $a \in A$ and for each cover $e \colon E \twoheadrightarrow B_a$
  there is $c \in C_a$ and a map $t \colon D_c \rightarrow E$ such
  that $q_{a, c} = e \circ t$.
\end{definition}

Squares that are both covering and collection are sometimes referred
to as \emph{weakly initial sets of covers} or \emph{cover bases}.
\begin{definition}
  Let $\catc$ be a regular category. We say that a map
  $f \colon B \rightarrow A$ \emph{admits a cover base} if $f$
  fits into the right hand side of a square as in \eqref{eq:2} that is
  both covering and collection.

  The axiom \emph{weakly initial set of covers} ($\wisc$) states that
  any map admits a cover base.
\end{definition}

\begin{lemma}
  \label{lem:weakdepchoice}
  Suppose that we are given a covering collection square as in
  \eqref{eq:2}. Then the following holds in the internal language.

  For all $a \in A$, we have the following. Suppose we are given a
  family of types $(X_b)_{b \in B_a}$ such that $X_b$ is inhabited
  for all $b \in B_a$. Then there exists $c \in C_a$ and an element of
  the product type $\Pi_{d \in D_{a, c}} X_{q_{a, c}(d)}$.
\end{lemma}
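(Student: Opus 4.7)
The plan is to apply the collection property of the square directly to the cover coming from $X$ itself. Specifically, working internally at a fixed $a \in A$, I form the dependent sum $E := \Sigma_{b \in B_a} X_b$ together with its first projection $e \colon E \to B_a$. Because each fiber $X_b$ is inhabited by assumption, $e$ is a cover of $B_a$ (in a regular category, a map is a regular epi iff every fibre is inhabited in the internal logic, which is exactly what the hypothesis gives).

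Next I invoke the collection condition on the square \eqref{eq:2} applied to the cover $e$: this yields some $c \in C_a$ and a map $t \colon D_{a,c} \to E$ satisfying $q_{a,c} = e \circ t$. Unpacking $t$ in the internal language, for each $d \in D_{a,c}$ we can write $t(d) = (b_d, x_d)$ with $b_d \in B_a$ and $x_d \in X_{b_d}$; the factorisation equation $e \circ t = q_{a,c}$ forces $b_d = q_{a,c}(d)$, so $x_d \in X_{q_{a,c}(d)}$. The assignment $d \mapsto x_d$ is then the desired element of $\Pi_{d \in D_{a,c}} X_{q_{a,c}(d)}$.

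There is no serious obstacle; the only subtlety is the justification for forming the cover $e$ and applying collection uniformly in $a$. This is straightforward if one reads the lemma as a statement in the slice over $A$: the family $X$ is an object of $\catc/B$ whose structure map $X \to B$ becomes a cover precisely under the inhabitedness hypothesis, and the collection property of the square (which was phrased in the internal/stack language precisely to allow such a universal application) then produces the required $c$ and $t$ in the slice. If preferred, one can also argue externally, constructing $E$ as the pullback of $X \to B$ along the fibre inclusion $B_a \hookrightarrow B$ and then directly applying the externally stated collection condition.
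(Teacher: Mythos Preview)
Your proof is correct and follows exactly the same approach as the paper: the paper's proof is the one-line statement ``We apply collection to the cover $\Sigma_{b \in B_a} X_b \twoheadrightarrow B_a$ given by projection (which is a cover since each $X_b$ is inhabited),'' and you have simply spelled out the details of how the resulting factorisation $t$ yields the required dependent function.
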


\begin{proof}
  We apply collection to the cover $\Sigma_{b \in B_a} X_b
  \twoheadrightarrow B_a$ given by projection (which is a cover since
  each $X_b$ is inhabited).
\end{proof}

The following lemmas, which will be used later are easy to check, so
we omit proofs here.
\begin{lemma}
  \label{lem:coverbasepb}
  Suppose that a map $f \colon B \rightarrow A$ admits a weak cover
  base. Then the same is true for the pullback of $f$ along any map
  $h \colon A' \rightarrow A$.

  Moreover, the pullback of the covering and collection square along
  $h$ is also covering and collection.
\end{lemma}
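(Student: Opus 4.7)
The plan is to take the pullback of the entire covering-collection square along $h$ componentwise, and then verify each of the two conditions in turn. Concretely, I would set $B' = B \times_A A'$, $C' = C \times_A A'$, and $D' = D \times_A A'$, with $f' \colon B' \to A'$, $p' \colon C' \to A'$, $g' \colon D' \to C'$ and $q' \colon D' \to B'$ the canonical induced maps. The equation $f' \circ q' = p' \circ g'$ is just $f \circ q = p \circ g$ pulled back along $h$, so commutativity of the new square is automatic.

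For the covering condition, I would invoke the fact that in a regular category covers coincide with regular epimorphisms and are therefore stable under pullback. Since $p'$ is the pullback of $p$ along $h$, it is a cover. By a standard pullback pasting argument, the comparison map $D' \to B' \times_{A'} C'$ is the pullback along $h$ of the corresponding comparison $D \to B \times_A C$, hence also a cover.

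For the collection condition I would argue in the internal (stack) logic. The key observation is that pullback along $h$ just re-indexes fibres: for $a' \in A'$ we have $B'_{a'} \cong B_{h(a')}$, $C'_{a'} \cong C_{h(a')}$, and for $c' \in C'_{a'}$ corresponding to $c \in C_{h(a')}$ the fibre $D'_{a',c'}$ is $D_{h(a'),c}$, with $q'_{a',c'}$ identified with $q_{h(a'),c}$. Given any cover $e \colon E \twoheadrightarrow B'_{a'}$, I would then apply the collection property of the original square at $h(a') \in A$ to obtain a $c \in C_{h(a')}$ (equivalently a $c' \in C'_{a'}$) and a factorisation $t \colon D_{h(a'),c} \to E$ with $q_{h(a'),c} = e \circ t$. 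Under the identifications above this is exactly the collection data required at $a'$ in the pulled-back square.

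The whole argument is essentially bookkeeping; the one thing to watch is that the collection axiom is phrased in stack semantics, so I would want to be explicit that the reasoning at $a' \in A'$ really is the reindexing along $h$ of the corresponding stack-semantic instance at $h(a') \in A$. Since pullback preserves all the local fibre data this causes no real obstacle, and I do not expect any single step to be the main difficulty.
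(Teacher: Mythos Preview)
Your proposal is correct and follows the natural approach. The paper actually omits the proof entirely, remarking that it (together with the companion lemma on coproducts) is ``easy to check''; your outline is precisely the kind of routine verification the authors had in mind.
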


\begin{lemma}
  \label{lem:coverbasecoprod}
  Suppose that $\catc$ has disjoint coproducts. Suppose that $f_1
  \colon B_1 \rightarrow A_1$ and $f_2 \colon B_2 \rightarrow A_2$
  both admit weak cover bases. Then the same is true for $f_1 + f_2
  \colon B_1 + B_2 \rightarrow A_1 + A_2$.

  Moreover, the  coproduct of the two covering and collection squares
  is itself covering and collection.
\end{lemma}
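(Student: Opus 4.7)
The plan is to leverage extensivity of $\catc$, which (by the footnote in Proposition \ref{prop:ppercoprod}) follows from $\catc$ being a finitely cocomplete locally cartesian closed category with disjoint coproducts. Extensivity has two consequences I will use: pullback distributes over binary coproducts, so that
\begin{equation*}
  (B_1 + B_2) \times_{A_1 + A_2} (C_1 + C_2) \;\cong\; (B_1 \times_{A_1} C_1) + (B_2 \times_{A_2} C_2),
\end{equation*}
and coproducts of regular epimorphisms are again regular epimorphisms (equivalently, in a regular category, coproducts of covers are covers).

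First I would verify the covering condition for the coproduct square. The map $p_1 + p_2 \colon C_1 + C_2 \to A_1 + A_2$ is the coproduct of two covers, hence a cover. Under the isomorphism above, the comparison map $D_1 + D_2 \to (B_1 + B_2) \times_{A_1 + A_2} (C_1 + C_2)$ becomes the coproduct of the two comparison maps $D_i \to B_i \times_{A_i} C_i$, each of which is a cover by hypothesis, so the coproduct is a cover as well.

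Next I would verify the collection condition. Working in the internal language (via stack semantics, as in the definition of collection), disjointness of coproducts lets me split on whether a given $a \in A_1 + A_2$ lies in the image of the left or right coprojection. In either case, extensivity identifies the fibre of $f_1 + f_2$ over $a$ with $(B_i)_a$ and the fibre of $p_1 + p_2$ over $a$ with $(C_i)_a$, and the restricted family $q$ agrees with the corresponding $q_i$. Given a cover $e \colon E \twoheadrightarrow (B_1 + B_2)_a$, I apply the collection property of the $i$-th square to obtain the required $c$ and factorisation $t$; composing $c$ with the coproduct inclusion $(C_i)_a \hookrightarrow (C_1 + C_2)_a$ gives the witness needed for the coproduct square.

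The only potential obstacle is the bookkeeping around extensivity — specifically confirming that fibre formation over the disjoint summands interacts with coproducts as one expects, and that the case split in the internal language is legitimate. Once that is granted there is no further conceptual content; the first statement of the lemma then follows immediately since a cover base for $f_1 + f_2$ has been exhibited.
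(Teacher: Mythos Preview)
The paper omits this proof entirely, stating only that the lemma is ``easy to check.'' Your argument is correct and supplies precisely the details one would expect: using extensivity to identify the relevant pullbacks with coproducts of the componentwise pullbacks, checking that coproducts of covers are covers, and handling the collection clause by a case split on the summands of $A_1 + A_2$ in the internal language. There is nothing to compare against, and no gap in what you have written.
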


\subsection{Construction of the Initial Algebras}
\label{sec:constr-init-algebr-1}

In this section we work towards the construction of initial algebras
for dependent pointed polynomial endofunctors with reductions over
$\Pi W$-pretoposes. Although there are a number of possible approaches
to doing this that already appear in the literature, none seems to be
quite adequate for our purposes (this will be discussed further in
section \ref{sec:other-appr-constr}). The main obstacle is that we
wish for the construction to hold in categories that do not have
infinite colimits, such as realizability toposes.  We therefore give a
direct construction for $\Pi W$-pretoposes rather than applying an
existing result.

\subsubsection{Outline of the Construction}
\label{sec:outline-construction}

We start with a rough illustration of the overall idea, with the
motivation for each part of the proof.

For the proof to apply for
realizability toposes, the proof should be carried out in the internal
logic of the $\Pi W$-pretopos. We can see that some kind of
transfinite construction is likely to be necessary, and the only such
construction available to us internally is to use $W$-types (and in
section \ref{sec:non-existence-due} we will see that $W$-types really
are necessary for the theorem to hold). By the results of Gambino and
Hyland in \cite{gambinohylanddepw} we may use dependent $W$-types.
Some form of the axiom of choice may be necessary. $\wisc$ is acceptable,
since it holds in many examples of $\Pi W$-pretoposes including
realizability toposes, but we will try to avoid anything stronger.

The most na{\"i}ve approach using $W$-types is as follows. We know
from the description of $P_{f, g, h, k}$ algebras before that an
algebra structure on $W$ consists of the structure of an algebra over
the polynomial endofunctor $P_{f, g, h}$ whose operators satisfy the
reduction equations. We might therefore take $W$ to be an initial
algebra for $P_{f, g, h}$ and then simply quotient out by the
equivalence relation generated by the reduction equations. Note
however, that this won't work.  We need in particular an algebra
structure on $W / {\sim}$. For the time being we will consider the non
dependent case for simplicity.  Suppose that we want to define
$\sup(\alpha)$ for $\alpha \colon X_y \rightarrow W / {\sim}$ (the solid
horizontal line below). We want to use the algebra structure on $W$ to
define $\sup(\alpha)$, but to do this, we need a map
$X_y \rightarrow W$ (the dotted line below).
\begin{equation*}
  \xymatrix{ & W \ar@{->>}[d] \\
    X_y \ar@{.>}[ur] \ar[r]_\alpha & W/{\sim}}
\end{equation*}
In order for any such map to exist, we need the axiom of choice, and
then once we've found such a map we need to ensure that the particular
choice of map doesn't matter in order to produce a well defined
algebra structure.

Note however, that if $(A_i, q_i)_{i \in I}$ is a cover base for
$(X_y)_{y \in Y_z, z \in Z}$, then there does exist a dotted line in
the diagram below for some $i \in I$.
\begin{equation}
  \label{eq:lift}
  \begin{gathered}
    \xymatrix{ A_i \ar@{->>}[d] \ar@{.>}[r] & W \ar@{->>}[d] \\
      X_y \ar[r]_\alpha & W/{\sim}}
  \end{gathered}
\end{equation}

We therefore modify the na{\"i}ve argument as follows. We first form a
dependent $W$ type, using as arities, not $(X_y)_{y \in Y_z}$ directly,
but instead $(A_i)_{i \in I}$ where $(A_i, q_i)_{i \in I}$ is a cover
base for $(X_y)_{y \in Y_z}$.

We then define an equivalence relation $\sim$ on $W$ as (the image of)
another dependent $W$-type. We need to ensure of all of the following:
\begin{enumerate}
\item The reduction equations are satisfied.
\item If $\alpha(q_i(a)) \sim \alpha'(q_{i'}(a'))$ whenever $q_i(a) =
  q_{i'}(a')$ then also $\sup(\alpha) \sim \sup(\alpha')$ (function
  extensionality).
\item $\sim$ is an equivalence relation, in particular symmetric and
  transitive.
\end{enumerate}

Using a cover base like this has solved one problem but introduced
another.
In order to show that the algebra structure is initial, we
will need that any $\alpha \colon A_i \rightarrow W/{\sim}$ extends to
$X_y$ as below, but this is not always the case.
\begin{equation*}
  \xymatrix{ A_i \ar@{->>}[d] \ar[r]^\alpha & W \ar@{->>}[d] \\
    X_y \ar@{.>}[r] & W/{\sim}}
\end{equation*}
In fact the dotted line exists if and only if
$\alpha(q_i(a)) \sim \alpha(q_{i'}(a'))$ whenever
$q_i(a) = q_{i'}(a')$. To deal with this point we define $\sim$ not to
be an equivalence relation, but instead a \emph{partial} equivalence
relation. We then ensure that whenever
$\sup(\alpha) \sim \sup(\alpha)$ the condition above is satisfied (we
will refer to such elements as \emph{well defined}). Then we can
restrict to $w \in W$ such that $w \sim w$ in our construction.

A final point is that we know the dotted map in \eqref{eq:lift}
exists, but now we also have to show it is well defined.  We will
define $\sim$ as the image of a certain $W$-type, and well definedness
will amount to the existence of a function which provides for each
$a \in A_i$ and $a' \in A_{i'}$ such that $q_i(a) = q_{i'}(a')$, a
witness of $\alpha(a) \sim \alpha(a')$. We have effective quotients
and ensured that $\sim$ is an equivalence relation, but this only
tells us that such a witness exists for each $a$, not how to find one. To
deal with this, we use another cover base, this time for
$A_i \times_{X_y} A_{i'}$ over all $y \in Y_z$. We then can use the
same trick again of using the cover base in our dependent $W$-type
instead of $A_i \times_{X_y} A_{i'}$ itself.

We now provide a more careful, detailed version of the above argument.

\subsubsection{$2$-Cover Bases}
\label{sec:2-coverbases}

At the end of the outline we indicated that we would need two levels
of cover base. We formalise this using the following notion.

\begin{definition}
  Let $u \colon U \rightarrow I$ be a morphism in $\catc$.
  A \emph{$2$-cover base for $u$} consists of two squares of the
  following form that are both covering and collection.
\begin{equation}
  \label{eq:21}
  \begin{gathered}
    \xymatrix{ A \ar[r]^q \ar[d]_g & X \ar[d]^f \\
      J \ar[r]_p & Y }
  \end{gathered}
\end{equation}

\begin{equation}
  \label{eq:22}
  \begin{gathered}
    \xymatrix{ B \ar[r]^t \ar[d]_h & A \times_X A \ar[d]^{\langle g, g
        \rangle} \\
      K \ar[r]_s & J \times_Y J}
  \end{gathered}
\end{equation}
\end{definition}

Note in particular that if $\wisc$ holds in the pretopos, then any map
has a 2-cover base by applying $\wisc$ twice. Also if $X$ is the surjective
image of a projective object then $g \circ f$ has a 2-cover base, which
in particular includes all finite colimits of representables in
presheaf categories.

We also prove below that maps that admit $2$-cover bases are closed
under pullback and  coproduct.

\begin{lemma}
  \label{lem:twocovpb}
  Suppose that a map $f \colon X \rightarrow Y$ admits a $2$-cover
  base. Then the same is true for the pullback of $f$ along any map
  $Y' \rightarrow Y$.
\end{lemma}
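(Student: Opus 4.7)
The plan is to apply Lemma \ref{lem:coverbasepb} twice, once to each of the two covering and collection squares in the given $2$-cover base. First, I would fix a map $h \colon Y' \rightarrow Y$ and form the pullback $f' \colon X' \rightarrow Y'$ of $f$ along $h$, together with $J' = J \times_Y Y'$, $p' \colon J' \rightarrow Y'$ the pullback projection, $A' = A \times_J J'$, and $g' \colon A' \rightarrow J'$ the pullback of $g$ along $J' \rightarrow J$. The universal property of $X'$ produces a unique $q' \colon A' \rightarrow X'$ compatible with $q$ and the projection $A' \rightarrow J' \rightarrow Y'$, so the first of the two squares required for a $2$-cover base of $f'$ is literally the pullback of the first original square along $h$, and Lemma \ref{lem:coverbasepb} immediately tells us it is covering and collection.

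For the second square, I first need to identify the canonical map $\langle g', g' \rangle \colon A' \times_{X'} A' \rightarrow J' \times_{Y'} J'$ as a pullback of the original $\langle g, g \rangle$. By standard pullback pasting there are natural isomorphisms $J' \times_{Y'} J' \cong (J \times_Y J) \times_Y Y'$ and $A' \times_{X'} A' \cong (A \times_X A) \times_Y Y'$, and under these isomorphisms the vertical map $\langle g', g' \rangle$ is precisely the pullback of $\langle g, g \rangle$ along the projection $(J \times_Y J) \times_Y Y' \rightarrow J \times_Y J$. Then pulling the second original square back along that projection and applying Lemma \ref{lem:coverbasepb} again yields a covering and collection square with right edge $\langle g', g' \rangle$, which provides the second square of the desired $2$-cover base.

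The only mildly fiddly part is the pullback-pasting verification that $A' \times_{X'} A' \cong (A \times_X A) \times_Y Y'$ with the right map to $(J \times_Y J) \times_Y Y'$; everything else is a direct invocation of Lemma \ref{lem:coverbasepb}. Since the paper earlier allows proofs to be given in the internal language, I would likely just remark that the isomorphism is an easy calculation (or spell it out briefly in terms of types-over-$Y'$) rather than drawing the pasting diagram in full.
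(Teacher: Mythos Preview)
Your proposal is correct and follows exactly the approach of the paper, whose proof consists solely of the sentence ``By applying lemma \ref{lem:coverbasepb} twice.'' You have simply spelled out the details that the paper leaves implicit, in particular the pullback-pasting identification of $\langle g', g' \rangle$ with a pullback of $\langle g, g \rangle$ needed for the second application.
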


\begin{proof}
  By applying lemma \ref{lem:coverbasepb} twice.
\end{proof}

\begin{lemma}
  \label{lem:twocovcoprod}
  Suppose that $\catc$ has disjoint coproducts. Suppose further that
  $f_1 \colon X_1 \rightarrow Y_1$ and
  $f_2 \colon X_2 \rightarrow Y_2$ admit $2$-cover bases. Then the
  same is true for $f_1 + f_2 \colon X_1 + X_2 \rightarrow Y_1 + Y_2$.
\end{lemma}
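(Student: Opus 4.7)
The plan is to bootstrap this from Lemma \ref{lem:coverbasecoprod}, which already tells us that coproducts of covering-and-collection squares are covering-and-collection. Suppose that for $i = 1,2$ we are given $2$-cover bases
\begin{equation*}
  \begin{gathered}
    \xymatrix{ A_i \ar[r]^{q_i} \ar[d]_{g_i} & X_i \ar[d]^{f_i} \\
      J_i \ar[r]_{p_i} & Y_i }
  \end{gathered}
  \qquad
  \begin{gathered}
    \xymatrix{ B_i \ar[r]^{t_i} \ar[d]_{h_i} & A_i \times_{X_i} A_i \ar[d]^{\langle g_i, g_i\rangle} \\
      K_i \ar[r]_{s_i} & J_i \times_{Y_i} J_i }
  \end{gathered}
\end{equation*}
for $f_i$.

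For the first square of the desired $2$-cover base of $f_1 + f_2$ I would simply take the coproduct of the two first squares above; by Lemma \ref{lem:coverbasecoprod} this is a covering collection square with right-hand map $f_1 + f_2$ and left-hand map $g_1 + g_2 \colon A_1 + A_2 \to J_1 + J_2$. For the second square I would like to take the coproduct of the two second squares above, giving a covering collection square with right-hand map $\langle g_1, g_1\rangle + \langle g_2, g_2\rangle$. The key observation is that because $\catc$ has disjoint coproducts and is therefore extensive (the footnote after Proposition \ref{prop:ppercoprod}), pullback distributes over binary coproducts, so
\begin{equation*}
  (A_1 + A_2) \times_{X_1 + X_2} (A_1 + A_2) \;\cong\; (A_1 \times_{X_1} A_1) + (A_2 \times_{X_2} A_2),
\end{equation*}
and likewise for $(J_1 + J_2) \times_{Y_1 + Y_2} (J_1 + J_2)$, the ``cross'' pullbacks $A_1 \times_{X_1 + X_2} A_2$ and $J_1 \times_{Y_1 + Y_2} J_2$ being initial by disjointness. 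Under these isomorphisms the map $\langle g_1 + g_2, g_1 + g_2\rangle$ is identified with $\langle g_1, g_1\rangle + \langle g_2, g_2\rangle$, so the coproduct of the second squares is (up to isomorphism) a covering and collection square over $\langle g_1 + g_2, g_1 + g_2\rangle$, as required.

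The only nontrivial step is the distributivity argument above; once that identification is in hand, the lemma follows directly by applying Lemma \ref{lem:coverbasecoprod} twice, so I would present the proof essentially as: ``By Lemma \ref{lem:coverbasecoprod} and extensivity of $\catc$, take the coproducts of the two levels of the given $2$-cover bases, using disjointness to identify the pullbacks of the coproducts with the coproducts of the pullbacks.''
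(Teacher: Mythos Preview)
Your proposal is correct and follows exactly the paper's approach: the paper's proof is the single line ``By applying lemma \ref{lem:coverbasecoprod} twice.'' You have simply made explicit the identification $(A_1+A_2)\times_{X_1+X_2}(A_1+A_2)\cong (A_1\times_{X_1}A_1)+(A_2\times_{X_2}A_2)$ (and similarly for the $J$'s) via disjointness/extensivity, which the paper leaves implicit in that one-liner.
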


\begin{proof}
  By applying lemma \ref{lem:coverbasecoprod} twice.
\end{proof}

\subsubsection{The Underlying Object of the Initial Algebra}
\label{sec:underly-object-init}

We assume we are given a polynomial with reductions as in
\eqref{eq:27}, which as in section \ref{sec:form-intern-lang},
we view as families of types $Y_z$,
$X_{z, y}$ and $R_{z, y, x}$ (which we'll sometimes abbreviate to
$X_y$ and $R_x$).

We will assume that $f$ has a 2-cover base and view it as families of
types as follows. We assume we have a type $I_{z, y}$ for each
$z \in Z$ and $y \in Y_z$ together with a type $A_{z,y,i}$ (which we
will usually write just as $A_i$) and surjections
$q_i \colon A_i \twoheadrightarrow X_y$ such that $(A_i, q_i)_{i \in I}$ form a
cover base for $X_y$.

For the second part of the 2-cover base, we say that for each $y$ and
$z$ we have a type $J_{i, i'}$ for
each $i, i' \in I_{z, y}$ and a family of types and surjections $t_j
\colon B_j \twoheadrightarrow A_i \times A_{i'}$ for $j \in J_{i,
  i'}$, forming a cover base for $A_i \times_{X_y} A_{i'}$.

We will now construct the initial algebra.

We first define a family of types $W_z$ for $z \in Z$ as the dependent
$W$-type generated by the following rule:

If $y \in Y_z$, $i \in I_{z, y}$ and
$\alpha \in \Pi_{a \in A_i} W_{h(q_i(a))}$ then $\cons(y, i, \alpha)$
is a new element of $W_z$.

We now form a second dependent $W$-type, $Q$, which will be indexed
over $W \times_Z W$. First note that by the definition of
$W$ and the basic properties of dependent $W$-types, for every $w \in
W_z$ there is unique $y \in Y_z$, $i \in I_{z, y}$ and $\alpha \in
\Pi_{a \in A_i} W_{h(q_i(a))}$ such that $w = \cons(y, i, \alpha)$. We
will sometimes write $Q_{w_0, w_1}$ as $Q(w_0, w_1)$ to ease
readability.
\begin{enumerate}
\item If $w' \in W_z$, $q_1 \in Q_{w_0, w'}$ and
  $q_2 \in Q_{w', w_1}$, then $Q_{w_0, w_1}$ has an element of the
  form $\concat(q_1, q_2)$.
\item If $y$, $i$ and $\alpha$ are such that
  $w_0 = \cons(y, i, \alpha)$ and we are given $x \in X_{z, y}$ such
  that $\alpha(x) = w_1$, $r \in R_{z, y, x}$, $j \in J_{i, i}$, and
  $\gamma \colon \Pi_{b \in B_j} Q(\alpha(\pi_0(t_j(b))),
  \alpha(\pi_1(t_j(b))))$, then $Q_{w_0, w_1}$ has an element of the
  form $\mrgl(r, j, \alpha, \gamma)$.
\item If $y$, $i$ and $\alpha$ are such that
  $w_1 = \cons(y, i, \alpha)$ and we are given $x \in X_{z, y}$ such
  that $\alpha(x) = w_0$, $r \in R_{z, y, x}$, $j \in J_{i, i}$, and
  $\gamma \colon \Pi_{b \in B_j} Q(\alpha(\pi_0(t_j(b))),
  \alpha(\pi_1(t_j(b))))$, then $Q_{w_0, w_1}$ has an element of the
  form $\mrgr(r, j, \alpha, \gamma)$.
\item If we are given $y \in Y_z$, 
  $i_0, i_1 \in I_{y}$, $j \in J_{i_0, i_1}$,
  $\alpha_0 \in \Pi_{a \in A_{i_0}} W_{h(q_{i_0}(a))}$,
  $\alpha_1 \in \Pi_{a \in A_{i_1}} W_{h(q_{i_1}(a))}$ are such that
  $w_0 = \cons(y, i_0, \alpha_0)$ and $w_1 = \cons(y, i_1, \alpha_1)$
  and
  $\gamma \in \Pi_{b \in B_j} Q(\alpha_0(\pi_0(t_j(b))),
    \alpha_1(\pi_1(t_j(b))))$, then $Q_{w_0, w_1}$ has an element of the form
  $\ext(\alpha_0, \alpha_1, \gamma)$.
\end{enumerate}

We now define $Q_z := \Sigma_{w_0 \in W_z} \Sigma_{w_1 \in W_z}
Q_{w_0, w_1}$ and define $l, r \colon Q_z \rightarrow W_z$ to be the
two projections.

Note that we have defined $Q_z$ so that its image in $W_z \times W_z$,
which we write as $\sim$, is a partial equivalence relation. For
transitivity we use $\concat$. We prove symmetry in the following
lemma.

\begin{lemma}
  The relation $\sim$ on $W_z$ is symmetric.
\end{lemma}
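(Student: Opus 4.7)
The plan is to define a symmetrisation map $\sigma \colon Q \to Q$ satisfying the endpoint-swapping condition: if $q \in Q_{w_0,w_1}$ then $\sigma(q) \in Q_{w_1,w_0}$. This immediately gives symmetry of $\sim$, since $\sim$ is the image of $Q$ in $W \times_Z W$. Internally, $\sigma$ is defined by recursion on the dependent $W$-type $Q$; categorically, one exhibits $Q$ (reindexed along the swap $W \times_Z W \to W \times_Z W$) as an algebra for the polynomial endofunctor defining $Q$, so that the initial algebra map delivers $\sigma$ directly.

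Three of the four recursive cases are straightforward. For $\concat(q_1,q_2) \in Q_{w_0,w_1}$ coming from some intermediate $w'$, I set $\sigma(\concat(q_1,q_2)) := \concat(\sigma(q_2),\sigma(q_1))$; the endpoints flip as required. For $\mrgl(r,j,\alpha,\gamma)$ and $\mrgr(r,j,\alpha,\gamma)$ (where $j \in J_{i,i}$), observe that in both constructors $\gamma$ has the same type $\Pi_{b \in B_j} Q(\alpha(\pi_0(t_j(b))),\alpha(\pi_1(t_j(b))))$, so I simply set $\sigma(\mrgl(r,j,\alpha,\gamma)) := \mrgr(r,j,\alpha,\gamma)$ and $\sigma(\mrgr(r,j,\alpha,\gamma)) := \mrgl(r,j,\alpha,\gamma)$; no recursive call on $\gamma$ is needed, and the conditions defining $\mrgl$ and $\mrgr$ are manifestly dual, so the endpoint roles swap by construction.

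The $\ext$ case is the main obstacle, and it is exactly where the second half of the $2$-cover base is used. Given $\ext(\alpha_0,\alpha_1,\gamma) \in Q_{w_0,w_1}$ with $j \in J_{i_0,i_1}$ and $\gamma \colon \Pi_{b \in B_j} Q(\alpha_0(\pi_0(t_j(b))), \alpha_1(\pi_1(t_j(b))))$, I need to produce $\ext(\alpha_1,\alpha_0,\gamma')$ for some $j' \in J_{i_1,i_0}$ and $\gamma'$ of the corresponding swapped type. Let $\tau \colon A_{i_0} \times_{X_y} A_{i_1} \to A_{i_1} \times_{X_y} A_{i_0}$ be the swap isomorphism. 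Then $\tau \circ t_j \colon B_j \twoheadrightarrow A_{i_1} \times_{X_y} A_{i_0}$ is a cover; applying the collection property of the square \eqref{eq:22} at the index $(i_1,i_0)$ yields some $j' \in J_{i_1,i_0}$ together with a map $s \colon B_{j'} \to B_j$ satisfying $\tau \circ t_j \circ s = t_{j'}$, i.e.\ $\pi_0 \circ t_{j'} = \pi_1 \circ t_j \circ s$ and $\pi_1 \circ t_{j'} = \pi_0 \circ t_j \circ s$. Define $\gamma'(b') := \sigma(\gamma(s(b')))$; the recursive call lands in $Q(\alpha_1(\pi_1(t_j(s(b')))), \alpha_0(\pi_0(t_j(s(b')))))$, which by the identities just noted is $Q(\alpha_1(\pi_0(t_{j'}(b'))), \alpha_0(\pi_1(t_{j'}(b'))))$, as required.

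Finally, verification that $\sigma$ has the endpoint-swapping property is a straightforward induction on $Q$, case by case. Categorically this is the assertion that two parallel algebra morphisms $Q \to W \times_Z W$ --- namely the composite of $\sigma$ with $(l,r)$ and the composite of $(l,r)$ with the swap on $W \times_Z W$ --- agree, which follows by initiality of $Q$ once one checks the four constructor clauses commute with each map, reducing to the same explicit identifications used above. Thus $\sigma$ witnesses symmetry of $\sim$.
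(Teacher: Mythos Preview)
Your approach follows the same overall strategy as the paper: argue by induction on $Q$, swap $\mrgl$ and $\mrgr$ directly, handle $\concat$ by recursion, and use the second level of the $2$-cover base for $\ext$. Your treatment of the $\ext$ case---applying collection to the swapped cover $\tau \circ t_j$ to obtain $j'$ and a factoring map $s$---is in fact a little slicker than the paper's, which instead argues pointwise inhabitation over $A_{i_1} \times_{X_y} A_{i_0}$ and then invokes weak dependent choice (Lemma~\ref{lem:weakdepchoice}).

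There is, however, a subtle gap in your framing, and it is precisely the point the paper singles out with its ``formally, we need to be a little careful'' remark. You aim to construct an actual map $\sigma \colon Q \to \tau^\ast Q$ by exhibiting $\tau^\ast Q$ as an algebra for the polynomial defining $Q$. But in the $\ext$ clause the collection property only yields the \emph{existence} of some $j' \in J_{i_1,i_0}$ and $s \colon B_{j'} \to B_j$; it does not provide a function choosing them uniformly in $(y,i_0,i_1,j)$. Without such a choice you cannot produce a definite element of $Q(w_1,w_0)$, so the purported algebra structure on $\tau^\ast Q$ is not well-defined in an arbitrary $\Pi W$-pretopos. The paper avoids this by targeting $\tau^\ast(\sim)$ rather than $\tau^\ast Q$: since $\sim$ is a subobject, one only needs to show that $Q(w_1,w_0)$ is inhabited, and for that the existential delivered by collection is enough. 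Your argument goes through once the target is changed in this way; as written, the map $\sigma \colon Q \to Q$ need not exist.
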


\begin{proof}
  We show by induction on the construction of $Q$ that given any
  element $q$ of $Q_{w_0, w_1}$ we can prove there exists an element
  of $Q_{w_1, w_0}$. Formally, we need to be a little careful to make
  this argument work in general $\Pi W$-pretoposes. Write
  $\tau \colon W \times_Z W \rightarrow W \times_Z W$ for the map
  swapping the two components. Then we need to define a map from $Q$
  to $\tau^\ast(\sim)$, regarded as objects in $\catc/(W \times_Z
  W)$. We do this by defining an algebra structure on
  $\tau^\ast(\sim)$ and then using the initial map. The proof below is
  presented as an argument by induction on the structure of
  $Q_{w_0, w_1}$ because it's more intuitive, but it's easy to adapt
  to the form above.

  Note that the definitions of $\mrgl$ and $\mrgr$ were
  chosen so that they can just be swapped round, and $\concat$ is easy
  to deal with by induction.
  
  This only leaves us with the case of $\ext$, which is a little non
  trivial. Suppose we are given an element of $Q_{w_0, w_1}$ of the
  form $\ext(\alpha_0, \alpha_1, \gamma)$. Suppose further that we
  are given some $(a', a) \in A_{i'} \times_{X_y} A_{i}$. Then note
  that we also have $(a, a') \in A_i \times_{X_y} A_{i'}$.

  Since $t_j \colon B_j \twoheadrightarrow A_i \times_{X_z} A_{i'}$ is
  a surjection, there exists some $b \in B_j$ such that
  $t_j(b) = (a, a')$ and we have that
  $\gamma(b) \in \Pi_{b \in B_j} Q(\alpha_0(\pi_0(t_j(b))),
    \alpha_1(\pi_1(t_j(b))))$. By induction, we may assume therefore
  that $Q(\alpha_1(\pi_1(t_j(b))), \alpha_0(\pi_0(t_j(b))))$ contains
  some element $q'$. Then
  using the fact that $(B_{i',i,j})_{j \in J_{i', i}}$ is a cover
  base, we deduce that there exists $j \in J_{i' , i}$ together with
  $\gamma' \colon B_{i',i',j} \rightarrow Q'$ choosing witnesses of
  this. We then form the element of $Q_{w_1, w_0}$,
  $\ext(\alpha_1, \alpha_0, \gamma')$ and note that it is as required.
\end{proof}

We say that $w \in W$ is \emph{well defined} if $w \sim w$. We write
$W'$ for the set of well defined elements of $Z$. Note that $\sim$
restricts to an equivalence relation on $W'$ (as is always the case
for partial equivalence relations). Note that we can use $\ext$ to
produce well defined elements as follows.

\begin{lemma}
  \label{lem:welldefdlem}
  Suppose that $w, w' \in W$ are of the form $\cons(y, i, \alpha)$ and
  $\cons(y, i', \alpha')$ respectively. Suppose further that for every
  $(a, a') \in A_i \times_{X_y} A_{i'}$ we have that
  $\alpha(a) \sim \alpha'(a')$. Then $w \sim w'$.
\end{lemma}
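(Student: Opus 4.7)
The plan is to apply the $\ext$ constructor of the $W$-type $Q$, using the second level of the $2$-cover base to turn the hypothesis (pointwise existence of witnesses) into an actual indexed family of witnesses.

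More precisely, the hypothesis says that for every pair $(a, a') \in A_i \times_{X_y} A_{i'}$ the type $Q(\alpha(a), \alpha'(a'))$ is inhabited. I would like to feed this to the $\ext$ constructor, which requires me to produce some $j \in J_{i, i'}$ together with a function
\[
\gamma \in \prod_{b \in B_j} Q\bigl(\alpha(\pi_0(t_j(b))), \alpha'(\pi_1(t_j(b)))\bigr).
\]
The tool for converting ``pointwise inhabited'' into ``uniformly inhabited over some cover'' is exactly Lemma~\ref{lem:weakdepchoice} applied to the second cover base $(B_j, t_j)_{j \in J_{i, i'}}$ of $A_i \times_{X_y} A_{i'}$. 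Applied to the family $\bigl(Q(\alpha(\pi_0(p)), \alpha'(\pi_1(p)))\bigr)_{p \in A_i \times_{X_y} A_{i'}}$, which is pointwise inhabited by hypothesis, this directly yields the desired $j$ and $\gamma$.

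Having $j$ and $\gamma$ in hand, I form $\ext(\alpha, \alpha', \gamma) \in Q_{w, w'}$, which witnesses $w \sim w'$. I also need to check that the coherence requirements on $\ext$ are met: since $w = \cons(y, i, \alpha)$ and $w' = \cons(y, i', \alpha')$ share the same $y \in Y_z$ and since $j$ was drawn from $J_{i, i'}$ (so that $t_j \colon B_j \twoheadrightarrow A_i \times_{X_y} A_{i'}$ projects correctly), the input to $\ext$ has the required shape.

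The only genuinely delicate point is making sure Lemma~\ref{lem:weakdepchoice} is applicable here. This needs the inhabited family to live over the ``$B$'' side of a covering collection square, which is the case because $(B_j, t_j)_{j \in J_{i, i'}}$ was postulated to form a cover base for $A_i \times_{X_y} A_{i'}$ (the second piece of the $2$-cover base for $f$). As in the symmetry lemma, if one wants to avoid the informal internal-language phrasing, the argument can be recast externally: the family of $Q$-values over $A_i \times_{X_y} A_{i'}$ is an object whose structure map is a cover (by the hypothesis $\alpha(a) \sim \alpha'(a')$), and the collection property of the second square of the $2$-cover base then produces the required $j$ and $\gamma$ as a morphism in $\catc$. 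Everything else is routine packaging.
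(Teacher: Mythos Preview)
Your proposal is correct and matches the paper's proof essentially step for step: use the second cover base $(B_j,t_j)_{j\in J_{i,i'}}$ for $A_i\times_{X_y}A_{i'}$ (via Lemma~\ref{lem:weakdepchoice}) to convert the pointwise inhabitation of $Q(\alpha(a),\alpha'(a'))$ into a choice function $\gamma$, then apply the $\ext$ constructor to obtain an element of $Q_{w,w'}$. The paper's version is terser but the argument is the same.
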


\begin{proof}
  Suppose that for every $(a, a') \in A_i \times_{X_y} A_{i'}$ we have
  that $\alpha(a) \sim \alpha'(a')$. Then using the fact that
  $(B_{i, i', j})_{j \in J_{i, i'}}$ is a cover base for
  $A_i \times_{X_y} A_{i'}$, there exists $j \in J_{i, i'}$ and a
  choice function
  $\gamma \in \Pi_{b \in B_j} Q(\alpha(\pi_0(t_j(b))),
  \alpha'(\pi_1(t_j(b))))$. We then have
  $\ext(\alpha, \alpha', \gamma) \in Q(\cons(y, i, \alpha), \cons(y,
  i', \alpha')$ and so $w \sim w'$.
\end{proof}

\begin{lemma}
  Suppose $w \in W$ is of the form $\cons(y, i, \alpha)$ and for every
  $(a, a') \in A_i \times_{X_y} A_i$ we have that
  $\alpha(a) \sim \alpha(a')$. Then $w \sim w$.
\end{lemma}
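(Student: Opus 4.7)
The plan is to observe that this statement is a direct special case of Lemma \ref{lem:welldefdlem}. In that lemma we proved that whenever we have two elements $\cons(y, i, \alpha)$ and $\cons(y, i', \alpha')$ sharing the same constructor $y$, and the pointwise relation $\alpha(a) \sim \alpha'(a')$ holds on all pairs $(a, a') \in A_i \times_{X_y} A_{i'}$, then the two elements are related by $\sim$. We simply instantiate this with $i' = i$, $\alpha' = \alpha$, and $w' = w$, and the hypothesis of the current lemma becomes exactly the hypothesis of Lemma \ref{lem:welldefdlem}.

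More concretely, I would unfold the argument once to make it self-contained: given $w = \cons(y, i, \alpha)$ such that $\alpha(a) \sim \alpha(a')$ for all $(a, a') \in A_i \times_{X_y} A_i$, use the fact that $(B_{i, i, j})_{j \in J_{i, i}}$ is a cover base for $A_i \times_{X_y} A_i$ together with the weak dependent choice property (Lemma \ref{lem:weakdepchoice}) applied to the family of inhabited types $Q(\alpha(\pi_0(t_j(b))), \alpha(\pi_1(t_j(b))))$ indexed by pairs, to obtain some $j \in J_{i, i}$ and a section $\gamma \in \Pi_{b \in B_j} Q(\alpha(\pi_0(t_j(b))), \alpha(\pi_1(t_j(b))))$. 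Then $\ext(\alpha, \alpha, \gamma)$ is an element of $Q(w, w)$, witnessing $w \sim w$.

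There is no real obstacle here — the constructor $\ext$ was introduced precisely to witness this kind of extensionality, and the case $i_0 = i_1$, $\alpha_0 = \alpha_1$ is exactly what is needed. The only subtlety, as noted throughout this section, is that to remain inside the internal logic of a $\Pi W$-pretopos one should phrase the use of the cover base via Lemma \ref{lem:weakdepchoice} rather than appealing to a choice function directly; but this is essentially the same packaging already used in the proof of Lemma \ref{lem:welldefdlem}.
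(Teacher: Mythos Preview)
Your proposal is correct and matches the paper's own proof exactly: the paper simply states that this is the special case of Lemma~\ref{lem:welldefdlem} with $i' = i$ and $\alpha' = \alpha$. Your additional unfolding of the argument via $\ext$ and Lemma~\ref{lem:weakdepchoice} is also faithful to how Lemma~\ref{lem:welldefdlem} itself is proved, so there is nothing to add.
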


\begin{proof}
  This is a special case of the previous lemma where $\alpha =
  \alpha'$ and $i = i'$.
\end{proof}

\subsubsection{The Algebra Structure of the Initial Algebra}
\label{sec:algebra-struct-init}

We now give $W'/{\sim}$ an algebra structure over the pointed
endofunctor. We first show the following lemma.

\begin{lemma}
  Suppose that we are given a map $\alpha_0 \in \Pi_{x \in X_y} W_{h(x)}'
  / {\sim}$. Then there exists $i \in I$ and $\alpha \in \Pi_{a \in A_i}
  W'$ such that for all $a \in A_i$ we have $[\alpha(a)] =
  \alpha_0(q_i(a))$.

  Furthermore, if $(i, \alpha)$ and $(i', \alpha')$ are two such pairs
  then $\cons(i, \alpha) \sim \cons(i', \alpha')$ (and in particular
  these are well defined).
\end{lemma}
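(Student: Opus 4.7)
The plan is to handle the two parts separately. For the existence of a lift, I would apply the collection half of the cover base $(A_i, q_i)_{i \in I_{z,y}}$ to an explicit surjection onto $X_y$. Concretely, let $E$ be the object of pairs $(x, w)$ with $x \in X_y$, $w \in W'_{h(x)}$, and $[w] = \alpha_0(x)$; formally this is an equaliser inside a pullback in $\catc$, so it exists. The projection $E \twoheadrightarrow X_y$ is a cover because the quotient $W' \twoheadrightarrow W'/{\sim}$ is a regular epimorphism, so every $\alpha_0(x)$ has a representative in $W'_{h(x)}$. The collection property of the cover base (equivalently, Lemma \ref{lem:weakdepchoice}) then supplies some $i \in I_{z,y}$ and a section $A_i \to E$, whose second component is the desired $\alpha \colon A_i \to W'$ satisfying $[\alpha(a)] = \alpha_0(q_i(a))$ for all $a \in A_i$.

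For the ``furthermore'' clause, suppose $(i, \alpha)$ and $(i', \alpha')$ are two such lifts; the aim is to invoke Lemma \ref{lem:welldefdlem}. For any $(a, a') \in A_i \times_{X_y} A_{i'}$ we have $q_i(a) = q_{i'}(a')$, so
\[
  [\alpha(a)] \;=\; \alpha_0(q_i(a)) \;=\; \alpha_0(q_{i'}(a')) \;=\; [\alpha'(a')].
\]
Since both $\alpha(a)$ and $\alpha'(a')$ lie in $W'$ and equal classes in the effective quotient by $\sim$ correspond to $\sim$-related representatives, we obtain $\alpha(a) \sim \alpha'(a')$. Lemma \ref{lem:welldefdlem} then yields $\cons(y, i, \alpha) \sim \cons(y, i', \alpha')$. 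Specialising to $(i', \alpha') = (i, \alpha)$ gives $\cons(y, i, \alpha) \sim \cons(y, i, \alpha)$, which is the well-definedness claim.

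The main delicate point is the first paragraph: rigorously carrying out the construction of $E$ in $\catc$ itself rather than just in the internal language, and confirming that the cover base's collection property applies to it. In the internal logic of a $\Pi W$-pretopos this is transparent, but externally one has to see $E$ as an equaliser of two maps into $W'/{\sim}$ pulled back to $X_y$, and invoke effectiveness of the quotient together with $\wisc$ (or the hypothesised $2$-cover base) to get the surjectivity and the lift. All of these ingredients are available in our setting, so the argument goes through; the second paragraph is then essentially a direct application of the previously established Lemma \ref{lem:welldefdlem}.
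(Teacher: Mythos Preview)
Your proof is correct and follows essentially the same approach as the paper: the paper also obtains the lift by applying Lemma~\ref{lem:weakdepchoice} (your explicit construction of $E$ is precisely what that lemma unpacks), and for the second part likewise reduces to Lemma~\ref{lem:welldefdlem} via effectiveness of the quotient to pass from $[\alpha(a)] = [\alpha'(a')]$ to $\alpha(a) \sim \alpha'(a')$.
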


\begin{proof}
  First we construct $\alpha$ by applying lemma
  \ref{lem:weakdepchoice}.

  Now suppose that $(i, \alpha)$ and $(i', \alpha')$ are two such
  pairs. By lemma \ref{lem:welldefdlem} it suffices to show that
  $\alpha(a) \sim \alpha'(a')$ for every $(a, a') \in A_i \times_{X_y}
  A_{i'}$. However, we know that
  $[\alpha(a)] = \alpha_0(q_i(a))$ and $[\alpha'(a')] =
  \alpha_0(q_{i'}(a'))$. Since $(a, a')$ belongs to the pullback over
  $X_y$, we have $q_i(a) = q_{i'}(a')$, and so $[\alpha(a)] =
  [\alpha'(a')]$. Finally, since quotients are effective, we deduce
  $\alpha(a) \sim \alpha'(a')$.
\end{proof}

\begin{lemma}
  \label{lem:rwalgstr}
  We exhibit an algebra structure on $W'/{\sim}$ over the pointed
  endofunctor.
\end{lemma}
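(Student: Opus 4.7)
The plan is to invoke the characterization of $P_{f,g,h,k}$-algebra structures from Section~\ref{sec:form-intern-lang}: such a structure on $W'/{\sim}$ amounts to specifying, for each $z \in Z$, $y \in Y_z$ and $\alpha_0 \in \Pi_{x \in X_y} (W'/{\sim})_{h(x)}$, an element $c(y, \alpha_0) \in (W'/{\sim})_z$ subject to the reduction equations $c(y, \alpha_0) = \alpha_0(x)$ whenever $R_{z,y,x}$ is inhabited. I would first construct $c$ and then verify these equations.

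To define $c$, apply the preceding lemma to $\alpha_0$ to obtain some $i \in I_{z,y}$ together with $\alpha \in \Pi_{a \in A_i} W'_{h(q_i(a))}$ satisfying $[\alpha(a)] = \alpha_0(q_i(a))$ for every $a \in A_i$, and set $c(y, \alpha_0) := [\cons(y, i, \alpha)]$. The preceding lemma, applied with the two pairs both taken to be $(i,\alpha)$, shows in particular that $\cons(y, i, \alpha) \sim \cons(y, i, \alpha)$, so the right-hand side is a legitimate element of $W'_z/{\sim}$; the second clause of that lemma shows the resulting class does not depend on the choice of $(i, \alpha)$.

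To verify a reduction equation, suppose we are given $x \in X_y$ and $r \in R_{z,y,x}$. By the coherence condition $h(x) = g(f(x)) = z$, so $\alpha_0(x) \in (W'/{\sim})_z$. Since $q_i$ is a cover, pick $a \in A_i$ with $q_i(a) = x$; it then suffices to show $\cons(y, i, \alpha) \sim \alpha(a)$ in $W_z$, which I would establish using the $\mrgl$ constructor. The crucial observation is that $\alpha$ automatically respects $\sim$ in the relevant sense: for any $(a_0, a_1) \in A_i \times_{X_y} A_i$ we have $q_i(a_0) = q_i(a_1)$, hence $[\alpha(a_0)] = \alpha_0(q_i(a_0)) = [\alpha(a_1)]$, and effectiveness of the quotient yields $\alpha(a_0) \sim \alpha(a_1)$. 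Applying Lemma~\ref{lem:weakdepchoice} to the cover base $(B_j, t_j)_{j \in J_{i,i}}$ for $A_i \times_{X_y} A_i$ then produces some $j \in J_{i,i}$ together with $\gamma \in \Pi_{b \in B_j} Q(\alpha(\pi_0(t_j(b))), \alpha(\pi_1(t_j(b))))$. The term $\mrgl(r, j, \alpha, \gamma)$ lives in $Q(\cons(y, i, \alpha), \alpha(a))$, witnessing $\cons(y, i, \alpha) \sim \alpha(a)$, and passing to quotients gives $c(y, \alpha_0) = [\alpha(a)] = \alpha_0(q_i(a)) = \alpha_0(x)$ as required.

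The main subtlety is the same formal point flagged in the symmetry lemma: to present $c$ as a genuine morphism in $\catc$ rather than an informal construction on generalised elements, one should extract it from the universal property of the pushout defining $P_{f,g,h,k}$, using the two component maps into $W'/{\sim}$ out of $\Sigma_g \Pi_f h^\ast(W'/{\sim})$ and $\operatorname{Id}_{\catc/Z}(W'/{\sim})$ implicitly defined above. Once this translation is made, the algebra axioms reduce to exactly the two verifications above, and the whole construction goes through uniformly over $Z$.
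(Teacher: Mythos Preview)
Your proposal is correct and follows essentially the same approach as the paper: invoke the characterisation of algebra structures from Section~\ref{sec:form-intern-lang}, define $c(y,\alpha_0) := [\cons(y,i,\alpha)]$ using the preceding lemma for existence and well-definedness, and then verify the reduction equations by producing a witness of the form $\mathtt{reduceleft}(r,j,\alpha,\gamma)$ after extracting $j$ and $\gamma$ from the second cover base. Your version is in fact slightly more explicit than the paper's, spelling out the appeal to effectiveness of quotients and to Lemma~\ref{lem:weakdepchoice} where the paper simply says ``following the proof of Lemma~\ref{lem:welldefdlem}''.
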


\begin{proof}
  By the characterisation of algebra structures in section
  \ref{sec:form-intern-lang}, it suffices to construct
  $\sup(\alpha_0)$ for every
  $\alpha_0 \in \Pi_{x \in X_y} W_{h(x)}'/ {\sim}$ and show that it
  respects the reduction equations.

  Given $\alpha_0 \in \Pi_{x \in X_y} W_{h(x)}'/ {\sim}$, we define
  $\sup(\alpha_0)$ to be $[ \cons(i, \alpha) ]$ where $(i, \alpha)$ is
  such that $[\alpha(a)] = \alpha_0(q_i(a))$ for every $a \in
  A_i$. This determines a unique element of $W_z / {\sim}$ by lemma
  \ref{lem:welldefdlem}.

  We now need to show that, for all $x \in X$, if $R_{z, y, x}$ is
  inhabited, then $\sup(\alpha_0) = \alpha_0(x)$. To do this, we will
  show there exists an appropriate element of $Q$ using
  $\mrgl$. Firstly, let $i$ and $\alpha$ be as above. Let $a \in A_i$
  be such that $q_i(a) = x$. Next, note that following the proof of
  lemma \ref{lem:welldefdlem} we can show there exists
  $j \in J_{i, i}$ and $\gamma \colon B_j \rightarrow Q'$ such that
  for all $b \in B_j$,
  $\gamma(b) \in Q(\alpha(\pi_0(t_j(b))),
  \alpha(\pi_1(t_j(b))))$. Then, $\mrgl(a, j, \alpha, \gamma)$
  witnesses $\cons(i, \alpha) \sim \alpha(a)$ and so
  $\sup(\alpha_0) = [\cons(i, \alpha)] = [\alpha(a)] = \alpha_0(x)$ as
  required.
\end{proof}

\subsubsection{Proof of Initiality}
\label{sec:proof-initiallity}

We now show that the algebra structure we defined is initial. Suppose
that we are given an object $T$ together with an algebra structure on
$T$. We will use the presentation from section
\ref{sec:form-intern-lang}, where we view an algebra structure as an
algebra structure for the underlying polynomial, $c \colon \Sigma_g
\Pi_f h^\ast(T) \rightarrow T$ such that $c$ respects the reduction
equations.

We first need to construct algebra map from $W'/{\sim}$ to $T$, and then
show that it is unique.

For this, we will follow the basic outline below.
\begin{enumerate}
\item Define a relation $S \rightarrowtail W \times_Z T$ by induction
  on the construction of $W$.
\item Show by induction on the construction of $Q$ that for every
  $q \in Q_{w_0, w_1}$ there exists a unique $t \in T$ such that
  $\langle w_0, t \rangle \in S$ and the same $t$ is unique such that
  $\langle w_1, t \rangle \in S$ (which in particular tells us that
  when $w \sim w$ there exists a unique $t \in T$ such that
  $\langle w_0, t \rangle \in S$).
\item Deduce (using effectiveness of quotients) that the corresponding
  relation on $W'/{\sim} \times_Z T$ is functional, and so gives a morphism
  $W'/{\sim} \rightarrow T$ over $Z$.
\end{enumerate}

We define $S \rightarrowtail W \times_Z T$ inductively as follows.

We add $\langle \cons(i, \alpha), x \rangle$ to $S$ when
$\alpha' \in \Pi_{x \in X_z} T_{h(x)}$ is such that for every
$a \in A_i$, $\alpha'(q(a))$ is the unique $t$ such that
$\langle \alpha(a), t \rangle \in S$ and $x$ is the result of applying
the algebra structure of $T$ to $\alpha'$.

Formally, we can construct $S$ in an arbitrary $\Pi W$-pretopos as a
dependent $W$-type as follows. We work over the context $W \times_Z
T$.

Let $\langle w, t \rangle \in W \times_Z T$. We construct $S(w, t)$
as follows.
Suppose we are given all of the following.
\begin{enumerate}
\item A triple $y, i, \alpha$ such that $w = \sup(y, i, \alpha)$
\item A dependent function $\alpha' \colon \Pi_{x : X(z)} T(h(x))$
  such that $t = c(\alpha')$ (recall that $c$ is the algebra structure
  for $T$).
\item A dependent function $\beta \colon \Pi_{a : A_i}S(\alpha(a),
  \alpha'(q(a)))$
\end{enumerate}
Then we construct a new element of $S$ of the form $\sup(\alpha',
\beta)$.

One can check that the
composition $S \rightarrow W \times_Z T \rightarrow W$ is monic, it
follows that this definition of $S$ matches the other definition.

We can now state and prove the main lemma.

\begin{lemma}
  Let $T$ and $S$ be as above. Then for any $e \in Q_{w_0, w_1}$,
  there exists a unique $t$ such that $\langle w_0, t \rangle \in S$
  and the same $t$ is unique such that $\langle w_1, t \rangle \in S$.
\end{lemma}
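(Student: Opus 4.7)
The plan is to prove the statement by induction on the construction of $Q_{w_0, w_1}$; formally, in an arbitrary $\Pi W$-pretopos this is carried out by endowing the object representing the conclusion with the four operations of $Q$ and invoking initiality, exactly analogously to the treatment of symmetry of $\sim$ earlier. Specifically, write $U(w) = \{ t \in T : \langle w, t \rangle \in S \}$ and let $P(w_0, w_1)$ denote the proposition \emph{there exists $t$ such that $U(w_0) = \{t\} = U(w_1)$}. We must show that $P$ is closed under $\concat$, $\mrgl$, $\mrgr$, and $\ext$ applied to its own components.

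For $\concat$, two applications of the inductive hypothesis yield a unique $t_1$ fitting with $(w_0, w')$ and a unique $t_2$ fitting with $(w', w_1)$, and uniqueness of the partner of the common $w'$ forces $t_1 = t_2$, giving the required common witness.

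For $\mrgl$, the key is that the algebra structure $c$ on $T$ satisfies the reduction equations: whenever $R_{z, y, x}$ is inhabited, $c(\alpha') = \alpha'(x)$. Unfolding the definition of $S$, any $\langle w_0, t\rangle \in S$ with $w_0 = \cons(y, i, \alpha)$ forces $t = c(\alpha')$ for some $\alpha' \in \Pi_{x \in X_y} T_{h(x)}$ with $\langle \alpha(a), \alpha'(q_i(a))\rangle \in S$ for every $a \in A_i$. The inductive hypothesis applied to the components $\gamma(b)$ pins down $\alpha' \circ q_i$ uniquely, and hence $\alpha'$ itself by surjectivity of $q_i$; the reduction equation then gives $t = \alpha'(x)$, which is also the unique partner of $w_1 = \alpha(a)$ for any $a$ above $x$. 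The $\mrgr$ case is symmetric.

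The $\ext$ case is the main obstacle: here $w_0 = \cons(y, i_0, \alpha_0)$ and $w_1 = \cons(y, i_1, \alpha_1)$ share a constructor $y$ but have different cover-base arities, and $\gamma$ witnesses pointwise compatibility only over $B_j \twoheadrightarrow A_{i_0} \times_{X_y} A_{i_1}$. Using the inductive hypothesis on each $\gamma(b)$, together with the covering-collection and effective-quotient argument familiar from Lemma \ref{lem:welldefdlem}, we assemble a single $\alpha' \in \Pi_{x \in X_y} T_{h(x)}$ simultaneously compatible with both $\alpha_0 \circ q_{i_0}$ and $\alpha_1 \circ q_{i_1}$; then $t := c(\alpha')$ is the shared witness, and pointwise uniqueness of each value of $\alpha'$ yields uniqueness of $t$.
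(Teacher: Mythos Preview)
Your proposal is correct and follows essentially the same inductive approach as the paper, handling the four constructors $\concat$, $\ext$, $\mrgl$, $\mrgr$ in the same way. One small remark: in your $\ext$ case you invoke ``the covering-collection and effective-quotient argument familiar from Lemma~\ref{lem:welldefdlem},'' but in fact neither is needed here---the construction of $\alpha'$ uses only the surjectivity of $q_{i_0}$, $q_{i_1}$, and $t_j$ together with the uniqueness provided by the inductive hypothesis (the paper spells out the well-definedness check by pivoting through a common $a' \in A_{i_1}$), and this is exactly what you describe once the misattributed reference is dropped.
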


\begin{proof}
  We prove this by induction on the construction of
  $e \in Q_{w_0, w_1}$.

  The case $\concat$ is easy to deal with by induction.

  We next consider $\ext$. Suppose that
  $w_0 = \cons(y, i_0, \alpha_0)$, $w_1 = \cons(y, i_1, \alpha_1)$ and
  $e$ is of the form $\ext(\alpha_0, \alpha_1, \gamma)$. Note that we
  may assume by induction that for every $b \in B_j$, $\gamma(b)$
  satisfies the statement of the lemma. We define an element
  $\tilde{\alpha}$ of $\Pi_{x \in X_z} T_{h(x)}$ as follows. Given
  $x \in X_z$, let $a$ be such that $q_{i_0}(a) = x$ (which exists
  since $q_{i_0}$ is surjective). Furthermore, let $a'$ be such that
  $q_{i_1}(a') = x$. Then clearly
  $(a, a') \in A_{i_0} \times_X A_{i_1}$. Let $b$ be such that
  $t_j(b) = (a, a')$. Then
  $\gamma(b) \in Q(\alpha_0(\pi_0(a)), \alpha_1(\pi_1(a')))$, so in
  particular there is a unique $t \in T_{h(x)}$ such that
  $\langle \alpha(a), t \rangle \in S$. We will take
  $\tilde{\alpha}(x)$ to be such a $t$, but we still need to complete
  the proof that $t$ is uniquely determined by $x$. It only remains to
  check that $t$ is independent of the choice of $a \in
  q_i^{-1}(x)$. So let $a'' \in q_i^{-1}(x)$ and let $t''$ be unique
  such that $\langle \alpha(a''), t'' \rangle \in S$. We need to check
  that $t = t''$. Suppose that $a' \in q_{i'}^{-1}(x)$, as before, and
  note that we have $b, b' \in B_j$ such that
  $\gamma(b) \in Q(\alpha(a), \alpha'(a'))$ and
  $\gamma(b') \in Q(\alpha(a''), \alpha'(a'))$. Using the inductive
  hypothesis, we have then a unique $t'$ such that
  $\langle \alpha'(a'), t' \rangle \in S$ and $t = t'$ and $t'' = t'$,
  which implies $t = t''$, as required. Finally, note that the
  $\tilde{\alpha}$ we have now defined is unique such that for all
  $a \in A_i$,
  $\langle \alpha(a), \tilde{\alpha}(q_i(a)) \rangle \in S$. By the
  same argument as above, $\tilde{\alpha}$ is also unique such that
  for all $a \in A_{i'}$,
  $\langle \alpha'(a), \tilde{\alpha}(q_{i'}(a)) \rangle \in
  S$. Therefore, applying the algebra structure of $T$ to
  $\tilde{\alpha}$ gives us a unique $t$ such that
  $\langle w_0, t \rangle \in S$ and the same $t$ is unique such that
  $\langle w_1, t \rangle \in S$ as required.

  The last two cases to consider are $\mrgl$ and $\mrgr$. We will just
  consider when $e$ is of the form $\mrgl(a_0, j, \alpha, \gamma)$,
  the other case being similar.

  First note that by induction we may assume that for every $b \in
  B_j$, $\gamma(b)$ satisfies the statement of the
  lemma. Hence, we may apply the same argument as before to construct
  a unique $\alpha_0 \in \Pi_{x \in X} T_{h(x)}$ such that for all $a
  \in A_i$, $\langle \alpha(a), \alpha_0(q_i(a)) \rangle \in
  S$.\footnote{In 
    fact, this is the sole reason for including $\gamma$ in the
    definition of $\mrgl$.}
  We now have, as before that applying the
  algebra structure of $T$ to $\alpha_0$ gives us a unique $t$ such
  that $\langle \cons(i, \alpha), t \rangle \in S$.

  Also, note that there exists $b \in B_j$ such that $q_j(b) = (a_0,
  a_0)$, and so again by induction, there is a unique $t'$ such that
  $\langle \alpha(a_0), t' \rangle \in S$.

  Finally, since the algebra structure on $T$ has to respect the
  reduction equations, we have $t = t'$, as required.
\end{proof}

Finally, since $W'$ includes only the well defined elements of $W$, we
deduce that for every $w \in W'$, there is a unique $t \in T$ such
that $\langle w, t \rangle \in S$, and if $w \sim w'$ and $t'$ is
unique such that $\langle w', t' \rangle \in S$ then $t = t'$. We
deduce that this gives us a well defined function
$W'/{\sim} \rightarrow T$. Finally note that by the definition of $S$
and the algebra structure on $W'/{\sim}$, we can easily see that the
function is the unique algebra structure preserving map, which gives
us the lemma below.

\begin{lemma}
  $W'/{\sim}$ with the algebra structure given in lemma
  \ref{lem:rwalgstr} is initial.
\end{lemma}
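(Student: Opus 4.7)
The plan is to combine the main lemma just proved with a short bookkeeping argument; by this point most of the real work is done. First I would apply the main lemma to the diagonal elements of $Q$: for each $w \in W'$ there is by definition some $q \in Q_{w,w}$, and the lemma hands back a unique $t \in T$ with $\langle w, t\rangle \in S$. Call this assignment $\phi_0(w)$. Then for $w \sim w'$ in $W'$, pick any witness $q \in Q_{w,w'}$ and use the lemma again: the same $t$ serves for both $w$ and $w'$, so $\phi_0(w) = \phi_0(w')$. Since quotients are effective, $\phi_0$ descends to a morphism $\phi \colon W'/{\sim} \to T$ over $Z$, and the pair $\langle w, \phi([w])\rangle$ is the unique element of $S$ with first coordinate $w$.

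Next I would verify that $\phi$ is an algebra morphism, tracing through the definitions. Given $\alpha_0 \in \Pi_{x \in X_y} (W'/{\sim})_{h(x)}$, recall from lemma \ref{lem:rwalgstr} that $\sup(\alpha_0) = [\cons(y, i, \alpha)]$ for any $(i, \alpha)$ with $[\alpha(a)] = \alpha_0(q_i(a))$. The clause defining $S$ says that the unique $t$ with $\langle \cons(y, i, \alpha), t \rangle \in S$ is $c(\alpha')$, where $\alpha' \in \Pi_{x \in X_y} T_{h(x)}$ is characterised by $\langle \alpha(a), \alpha'(q_i(a)) \rangle \in S$ for every $a \in A_i$. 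By uniqueness, this forces $\alpha'(q_i(a)) = \phi([\alpha(a)]) = \phi(\alpha_0(q_i(a)))$, and since $q_i$ is surjective, $\alpha' = \phi \circ \alpha_0$. Hence $\phi(\sup(\alpha_0)) = c(\phi \circ \alpha_0)$, as required.

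For uniqueness, suppose $\psi \colon W'/{\sim} \to T$ is any algebra morphism. I would prove by induction on the construction of $W$ (formally, by defining an appropriate algebra structure on $S$ viewed as a subobject and using initiality of $W$) that for every well-defined $w = \cons(y, i, \alpha)$, the pair $\langle w, \psi([w])\rangle$ lies in $S$. The inductive hypothesis provides $\langle \alpha(a), \psi([\alpha(a)])\rangle \in S$ for each $a \in A_i$, and the algebra morphism property of $\psi$ combined with the algebra structure of $W'/{\sim}$ says precisely that $\psi([w]) = c(\alpha')$ with $\alpha'(q_i(a)) = \psi([\alpha(a)])$, matching the defining clause of $S$. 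The uniqueness half of the main lemma then forces $\psi([w]) = \phi([w])$ for all $w \in W'$.

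The main conceptual obstacle, which has in fact been overcome by the main lemma, is that both the algebra structure on $W'/{\sim}$ and the relation $S$ are presented via implicit choices of $(i, \alpha)$ drawn from the cover base; the content of the main lemma is exactly that the resulting $t \in T$ is independent of these choices. Once that is granted, the verifications above are just a matter of chasing the defining clauses of $S$ and of $\sup$, with no further induction or choice principles required.
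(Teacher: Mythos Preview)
Your proposal is correct and follows exactly the approach the paper takes: the paper's own argument after the main lemma is a single sentence (``by the definition of $S$ and the algebra structure on $W'/{\sim}$, we can easily see that the function is the unique algebra structure preserving map''), and you have simply unpacked what that sentence means. Your description of how $\phi$ is obtained from the main lemma, why it is an algebra morphism, and why any other algebra morphism must have its graph contained in $S$ matches the intended reasoning precisely.
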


We can now deduce the main theorem of this section.
\begin{theorem}
  \label{thm:rwinitialalg}
  Let $\catc$ be a $\Pi W$-pretopos.
  \begin{enumerate}
  \item Suppose we are given a polynomial with reductions in $\catc$
    together with a $2$-covering for it. Then we can construct an
    initial algebra for the corresponding pointed polynomial
    endofunctor.
  \item Suppose that $\wisc$ holds in $\catc$, making it a predicative
    topos. Then every pointed polynomial
    endofunctor admits an initial algebra. In other words, $\catc$ has
    all $W$-types with reductions.
  \end{enumerate}
\end{theorem}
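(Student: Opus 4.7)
The plan is to simply collect together the constructions and lemmas assembled throughout Section 3.2.2--3.2.4 and observe that, once a $2$-cover base is in hand, all the hard work has already been done.

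For part (1), I would argue as follows. Given a polynomial with reductions together with a $2$-cover base, form the families $W_z$ and the relation family $Q$ as the two dependent $W$-types defined in Section \ref{sec:underly-object-init}; these exist because $\catc$ is a $\Pi W$-pretopos. Let $\sim$ be the image of $Q$ in $W \times_Z W$. The symmetry lemma together with the use of $\concat$ (for transitivity) show that $\sim$ is a partial equivalence relation on $W$, so it restricts to an equivalence relation on the object $W'$ of well-defined elements. Form $W'/{\sim}$ using effective quotients in the pretopos. Lemma \ref{lem:rwalgstr} then exhibits a pointed polynomial endofunctor algebra structure on $W'/{\sim}$, and the final lemma of Section \ref{sec:proof-initiallity} shows that this algebra is initial. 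This gives the desired initial algebra.

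For part (2), it suffices to show that under $\wisc$ every polynomial with reductions admits a $2$-cover base; then we reduce to part (1). But this is immediate: given the map $f \colon X \to Y$ in the polynomial, applying $\wisc$ to $f$ produces a covering and collection square as in \eqref{eq:21}, and applying $\wisc$ a second time to the canonical map $\langle g, g \rangle \colon A \times_X A \to J \times_Y J$ produces the second square \eqref{eq:22}. The conjunction is exactly a $2$-cover base.

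There is no genuine obstacle remaining, since the theorem is essentially a bookkeeping statement combining the lemmas already proved. The only minor thing to double-check is that the constructions of $W$, $Q$, the quotient $W'/{\sim}$, and the object $S$ used in the proof of initiality can all be carried out in an arbitrary $\Pi W$-pretopos (rather than requiring, say, a stratified pseudotopos for the induction arguments). This has been addressed in the excerpt itself, where each induction on $Q$ or on $W$ is explicitly recast as the construction of an algebra structure on a suitable object followed by the use of the initial algebra map. So the proof proposal is simply to cite the relevant lemmas of Sections \ref{sec:underly-object-init}--\ref{sec:proof-initiallity} for (1), and to verify the two applications of $\wisc$ for (2).
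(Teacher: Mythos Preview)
Your proposal is correct and matches the paper's approach exactly: the paper also treats the theorem as an immediate corollary of the preceding lemmas, writing only ``We can now deduce the main theorem of this section'' before stating it, and earlier notes that under $\wisc$ any map has a $2$-cover base ``by applying $\wisc$ twice,'' just as you describe. The only refinement you add is the explicit identification of the second application of $\wisc$ to the map $\langle g, g \rangle \colon A \times_X A \to J \times_Y J$, which the paper leaves implicit.
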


\section{A Simplification in Categories of Presheaves}
\label{sec:simpl-categ-presh}

In section \ref{sec:constr-init-algebr} we gave a very general
construction that works for any polynomial with reductions in any
predicative topos. However, the result is in some ways
unsatisfactory. Since we relied on effective quotients, the result
does not apply to presheaf assemblies, which are one of the main
intended applications of this work. The reliance on
cover bases and $\wisc$ may turn out to be less serious in practice,
but is still not ideal. It could, for example lead to subtle coherence
issues when applying the results to the semantics of type theory.

In this section we therefore give another version of the main result,
which will appear as theorem \ref{thm:initialalgps}. We no longer
assume effective quotients or $\wisc$, so the result is applicable to
a wider range of categories, and we obtain more concrete descriptions
of the initial algebras. The class of polynomials with reductions that
we consider is, however, much more restricted, but will still include
many interesting examples.

Recall, e.g. from \cite[Chapter 7]{jacobs} that in any finitely
complete category we can define the notion of internal category, and
thereby a notion of category of internal diagrams (which we will refer
to here as internal presheaves).

Let $\catc$ be a finitely cocomplete locally
cartesian closed category with disjoint coproducts and $W$-types
(e.g. a category of
assemblies). Note that for any internal category $\smcat{C}$ in
$\catc$, the category of internal assemblies is also finitely
cocomplete locally cartesian closed, and has disjoint coproducts.
We will construct
initial algebras for a certain class of polynomials with reductions in
such internal presheaf categories.

\subsection{Dependent $W$-Types in Internal Presheaves}
\label{sec:dependent-w-types}

We first give an explicit description of dependent $W$-types in
presheaves. We will consider polynomial endofunctors over the
following polynomial in internal presheaves. Note that by forgetting the
action, we can also view this as a polynomial in $\catc/\smcat{C}_0$
\begin{equation*}
  \xymatrix{ & X \ar[dl]_h \ar[r]^f & Y \ar[dr]^g & \\
    Z & & & Z}
\end{equation*}

Suppose we are given a morphism of presheaves $A \rightarrow Z$. Then,
using (the internal version of) Yoneda and the adjunctions
$f^\ast \dashv \Pi_f$ and $\Sigma_h \dashv h^\ast$ we can show that
for $c \in \smcat{C}_0$ elements of $\Pi_f h^\ast(A)(c)$ consist of
$y \in Y(c)$ (which we view as a map
$\ulcorner y \urcorner \colon \yoneda(c) \rightarrow Y$) together with
with a map $f^\ast(\yoneda(c)) \rightarrow A$ making the following
square commute.
\begin{equation*}
  \xymatrix{ f^\ast(\yoneda(c)) \ar[r] \ar[d]_{f^\ast(\ulcorner z
      \urcorner)} & A \ar[d] \\
    Y \ar[r]_h & Z }
\end{equation*}
Expanding the definition of $\yoneda(c)$, we see that this consists of
a (dependent) function assigning, for each $d \in \smcat{C}_0$, each
$\sigma \colon c \rightarrow d$ in $\smcat{C}_1$, and each
$x \in f_d^{-1}(Y(\sigma)(y))$, an element, $\alpha(\sigma, x)$ of
$A(d, h_d(Y(\sigma)(y)))$, satisfying the \emph{naturality condition}
that for all $\tau \colon d \rightarrow d'$ we have
$\alpha(\tau \circ \sigma, X(\tau)(x)) = A(\tau)(\alpha(\sigma,
x))$. Note that if we drop the naturality condition, then we get a
dependent polynomial functor in $\catc$. We denote the corresponding
dependent $W$-type as
$W_0$. We define the action of morphisms making $W_0$ into a presheaf
over $Z$ as follows. For $c \in \smcat{C}_0$ and $z \in Z(c)$,
everything in $W_0(c, z)$ is of the form
$\sup(y, \alpha)$ where $y$ and $\alpha$ are as above. Given
$\tau \colon c \rightarrow c'$, we define
$W_0(\tau) (\sup(y, \alpha))$ to be $\sup(Y(\tau)(y), \alpha')$ where
$\alpha'(\sigma, x)$ is defined to be $\alpha(\sigma \circ \tau, x)$.
Following Moerdijk and Palmgren in \cite[Paragraph
5.4]{moerdijkpalmgrenwtypes} we note that if we can form the subobject
of $W_0$ consisting of the corresponding dependent $W$-type consisting
of hereditarily natural\footnote{In \cite{moerdijkpalmgrenwtypes}
  Moerdijk and Palmgren refer to another condition in addition to
  naturality that they call \emph{composability}. We have already
  dealt with this by using exploiting the fact that we are using
  dependent $W$-types rather than ordinary $W$-types.} elements, then
this gives the $W$-type in presheaves. We can construct this subobject
in an arbitrary locally cartesian closed category with $W$-types by a
similar technique to the construction of dependent $W$-types from
ordinary $W$-types, which we do in the following lemma.

\begin{lemma}
  \label{lem:herednat}
  $W_0$ has a subobject $W$ such that an element $\sup(y, \alpha)$ of
  $W_0$ belongs to $W$ if and only if $\alpha$ is natural, and for
  every $\sigma \colon c \rightarrow d$ and $x \in Y(\sigma)(y)$, we
  have $\alpha(\sigma, x) \in W$.
\end{lemma}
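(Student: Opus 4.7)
The plan is to construct $W$ as the image of the structure map of an auxiliary dependent $W$-type $\tilde{W}$ over $W_0$ whose elements serve as ``witnesses of hereditary naturality''. This parallels the Moerdijk--Palmgren construction of a dependent $W$-type as a subobject of an ordinary one. Concretely, I would first let $A' \hookrightarrow W_0$ be the subobject of those $w = \sup(y, \alpha)$ for which $\alpha$ is natural; this is definable in $\catc$ as an equalizer encoding the equation $\alpha(\tau \circ \sigma, X(\tau)(x)) = W_0(\tau)(\alpha(\sigma, x))$, ranging over $\sigma, \tau, x$. The polynomial for $\tilde{W}$ then has base $W_0$, set of constructors $A'$ (over $W_0$ via the inclusion), arity $B(y) = \{(\sigma, x)\}$ at a constructor $w = \sup(y, \alpha) \in A'$, and reindexing $(\sigma, x) \mapsto \alpha(\sigma, x)$. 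Since $\catc$ has ordinary $W$-types and is LCCC, the construction of Gambino and Hyland produces the required initial algebra $\tilde{W}$.

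Next I would check that each fibre $\tilde{W}(w)$ is a subsingleton. Because $A' \hookrightarrow W_0$ is monic, for each $w$ there is at most one top-level constructor, so by induction over $\tilde{W}$ an element of $\tilde{W}(w)$ is uniquely determined by the tuple of its predecessors in the fibres $\tilde{W}(\alpha(\sigma, x))$, all subsingletons by the inductive hypothesis. Consequently the image of the structure map $\tilde{W} \to W_0$ is precisely the subobject
\[
  W := \{\, w \in W_0 : \tilde{W}(w) \text{ is inhabited} \,\}.
\]

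Finally, I would verify the stated hereditary property. The ``only if'' direction is immediate from the definition of $\tilde{W}$: an element of $\tilde{W}(\sup(y, \alpha))$ already forces $\sup(y, \alpha) \in A'$ (hence $\alpha$ natural) and the inhabitedness of each $\tilde{W}(\alpha(\sigma, x))$, i.e.\ $\alpha(\sigma, x) \in W$. For the ``if'' direction, naturality of $\alpha$ places $\sup(y, \alpha)$ in $A'$, and inhabitedness of each $\tilde{W}(\alpha(\sigma, x))$, combined with the subsingleton property of the fibres, yields a unique element of $\Pi_{(\sigma, x)} \tilde{W}(\alpha(\sigma, x))$, which the constructor at $\sup(y, \alpha) \in A'$ then packages into an element of $\tilde{W}(\sup(y, \alpha))$. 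The main obstacle will be the bookkeeping needed to express the naturality condition cleanly as a subobject of $W_0$ and to cast the setup as a genuine dependent polynomial over $W_0$; once that is done the argument is the standard hereditary-subobject trick and no further conceptual difficulty should arise.
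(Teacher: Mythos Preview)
Your approach is correct but differs from the paper's. The paper does not build a witness $W$-type over $W_0$; instead it introduces an auxiliary dependent $W$-type $V$ whose arities carry an \emph{extra} morphism $\tau \colon d \to e$, defines two recursive maps $r,s \colon W_0 \to V$ encoding respectively $\alpha(\tau\circ\sigma, X(\tau)(x))$ and $W_0(\tau)(\alpha(\sigma,x))$, exhibits a common retract $t \colon V \to W_0$, and takes $W$ to be the equalizer of $r$ and $s$. The characterisation then drops out by evaluating $r$ and $s$ and using $t$ to strip the wrappers. Your route---isolate the natural elements as a subobject $A' \hookrightarrow W_0$, form the dependent $W$-type $\tilde W$ over $W_0$ with constructors $A'$ and reindexing $(\sigma,x)\mapsto\alpha(\sigma,x)$, then take the image---is the standard hereditary-subobject trick, and the paper in fact remarks immediately after the proof that an alternative argument via ``paths'' in the style of Van den Berg--De Marchi is available, which is essentially what you are doing. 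The trade-off: the equalizer construction sidesteps the need to argue separately that $\tilde W \to W_0$ is monic (your subsingleton step, which in an arbitrary LCCC must be phrased as constructing a suitable algebra map rather than literal induction on a predicate), while your construction is more modular and reuses the dependent-$W$-type machinery directly rather than inventing the bespoke object $V$. Both work; just be sure, when you write it up, to justify the monicness of $\tilde W \to W_0$ by an explicit initial-algebra argument rather than an informal induction.
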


\begin{proof}
  We first modify the definition of $W_0$ to get a dependent $W$-type,
  $V$ defined as follows. We take the context and the constructors to
  be the same as for $W_0$. For $W_0$, the arity at $Y \in Y(c, z)$
  consisted of pairs $(\sigma, x)$ where
  $\sigma \colon c \rightarrow d$ and $x \in X(d, Y(\sigma)(y))$. For
  $V$, we instead define an element of the arity over $y$ to consist
  of two morphisms $\sigma \colon c \rightarrow d$ and
  $\tau \colon d \rightarrow e$ in $\smcat{C}$, together with
  $x \in X(d, Y(\sigma)(y))$. We define the reindexing map at
  $(\sigma, \tau, x)$ to be $Z(\tau \circ \sigma)(z)$. In other words
  we add an element to $V(c, z)$ of the form $\sup(y, \alpha)$ whenever
  $y \in Y(c, z)$, and $\alpha$ is a dependent function such that for
  $\sigma \colon c \rightarrow d$, $\tau \colon d \rightarrow e$ and
  $x \in X(d, Y(\sigma)(y))$, $\alpha(\sigma, \tau, x)$ is an element
  of $V(e, Z(\tau \circ \sigma)(z))$.

  Note that we have two maps $r, s \colon W_0 \rightarrow V$ over $Z$
  defined recursively as follows. Suppose we are given an element of
  $W_0$ of the form $\sup(y, \alpha)$. We define $r(\sup(y, \alpha))$
  to be $\sup(y, \alpha')$ and $s(\sup(y, \alpha))$ to be
  $\sup(y, \alpha'')$, where $\alpha'$ and $\alpha''$ are defined as
  follows. Let $\sigma \colon c \rightarrow d$,
  $\tau \colon d \rightarrow e$ and $x \in X(d, Y(\sigma)(y))$. We
  define $\alpha'(\sigma, \tau, x)$ to be
  $r(\alpha(\tau \circ \sigma, X(\tau)(x)))$. We define
  $\alpha''(\sigma, \tau, x)$ to be $s(W_0(\tau)(\alpha(\sigma, x)))$.

  We define $W$ to be the equaliser of $r$ and $s$.

  Note that $r$ and $s$ have a common retract $t \colon V \rightarrow
  W_0$ defined recursively as follows. Given an element of $V$ of the form
  $\sup(y, \alpha)$, we define $t(\sup(y, \alpha))$ to be $\sup(y,
  \alpha')$ where $\alpha'$ is defined as follows. Given $\sigma
  \colon c \rightarrow d$ and $x \in X(d, Y(\sigma)(y))$, we define
  $\alpha'(\sigma, x) := t(\alpha(\sigma, 1_d, x))$.

  We now need to check that $W$ does in fact satisfy the lemma.

  Every element of $W_0$ is of the form
  $\sup(y, \alpha)$. First suppose that $\sup(y, \alpha) \in W$. Then
  $\alpha' = \alpha''$, where $\alpha'$ and $\alpha''$ are as
  above. Hence for all $\sigma \colon c \rightarrow d$,
  $\tau \colon d \rightarrow e$ and $x \in X(d, Y(\sigma)(y))$ we have
  $r(\alpha(\tau \circ \sigma, X(\tau)(x))) =
  s(W_0(\tau)(\alpha(\sigma, x)))$. Applying the common retract $t$ of
  $r$ and $s$ to this equation allows us to deduce
  $\alpha(\tau \circ \sigma, X(\tau)(x)) =
  W_0(\tau)(\alpha(\sigma, x))$ for all $\sigma, \tau, x$, and so
  that $\alpha$ is natural. Applying the equation to the special case
  $\tau = 1_d$, allows to deduce $r(\alpha(\sigma,
  x)) = s(\alpha(\sigma, x))$ and so $\alpha(\sigma, x) \in W$ for all
  $\sigma$ and $x$.

  Conversely, suppose that $\alpha$ is natural and
  $\alpha(\sigma, x) \in W$ for all $\sigma$ and $x$. We need to show
  that $\alpha' = \alpha''$ where $\alpha'$ and $\alpha''$ are as
  above. Naturality tells us that for all
  $\sigma \colon c \rightarrow d$, $\tau \colon d \rightarrow e$ and
  $x \in X(d, Y(\sigma)(y))$ we have
  $\alpha(\tau \circ \sigma, X(\tau)(x)) = W_0(\tau)(\alpha(\sigma,
  x))$, and so applying $s$ we have
  $s(\alpha(\tau \circ \sigma, X(\tau)(x))) =
  s(W_0(\tau)(\alpha(\sigma, x)))$. However, we also have
  $\alpha(\tau \circ \sigma, X(\tau)(x)) \in W$ and so
  $s(\alpha(\tau \circ \sigma, X(\tau)(x))) = r(\alpha(\tau \circ
  \sigma, X(\tau)(x)))$. Putting these together we have
  $r(\alpha(\tau \circ \sigma, X(\tau)(x))) =
  s(W_0(\tau)(\alpha(\sigma, x)))$ and so $\alpha' = \alpha''$, and so
  $\sup(y, \alpha) \in W$, as required.
\end{proof}

\begin{remark}
  Lemma \ref{lem:herednat} can also be proved using the notion of
  paths, as used by Van den Berg and De Marchi for $M$-types in
  \cite[Proposition 5.7]{vdbergdemarchi}.
\end{remark}

Now note that the action of
morphisms restricts to the subobject $W$, making $W$ into a presheaf
(and in fact a subpresheaf of $W_0$). We can then assign $W$ an
algebra structure making into the initial algebra for the polynomial
endofunctor.

\subsection{Decidable and Locally Decidable Polynomials with
  Reductions}
\label{sec:decid-locally-deci}

We now define the class of polynomials with reductions that we will
work over. The basic idea is that a polynomial is decidable when for
each constructor there is either no reduction at all, or there is
exactly one reduction. $W$-types with reductions over decidable
polynomials can be viewed directly as dependent $W$-types. This makes
them simple to construct but not so useful in practice when we already
have $W$-types.

Therefore, instead of decidable polynomials with reductions, we look
at \emph{locally decidable} polynomials with reductions. In this case
we work in an internal presheaf category, and then the polynomial does
not have to be decidable in the internal logic of the presheaf
category. It turns out to be sufficient that it is decidable in the
external category, in order to construct the initial algebras.

\begin{proposition}
  \label{prop:decpolyequivs}
  The following are equivalent.
  \begin{enumerate}
  \item The polynomial with reductions \eqref{eq:27} is isomorphic to
    one of the following form.
    \begin{equation}
      \label{eq:decpolydiagram}
      \begin{gathered}
        \xymatrix { Y_2 \ar[r]^{\iota_2} & X_1 + Y_2 \ar[r]^{f_1 + 1_{Y_2}}
          \ar[dl]_h & Y_1 + Y_2 \ar[dr]^g \\
          Z & & & Z}        
      \end{gathered}
    \end{equation}
  \item $f \circ k$ is isomorphic to one of the inclusion maps of a
    coproduct.
  \item $f \circ k$ is a monomorphism with decidable image.
  \item In the internal logic, the following holds. For each
    constructor $y \in Y_z$, either there are no $x \in X_y$ such that
    $R_{z, y, x}$ is inhabited, or there exists exactly one
    $x \in X_{z, y}$ such that $R_{z, y, x}$ is inhabited, and in this
    case $R_{z, y, x}$ also has exactly one element.
  \end{enumerate}
\end{proposition}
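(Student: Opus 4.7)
The plan is to cycle through the four conditions using extensivity of $\catc$ (which holds since $\catc$ has disjoint coproducts, cf.\ the footnote in Proposition \ref{prop:ppercoprod}). The implication $(1) \Rightarrow (2)$ is immediate: in the diagram \eqref{eq:decpolydiagram}, $f \circ k = (f_1 + 1_{Y_2}) \circ \iota_2 = \iota_2$, which is by definition a coproduct inclusion. For $(2) \Leftrightarrow (3)$ I would invoke the standard fact that in a category with disjoint coproducts, a coproduct inclusion is monic with complemented (hence decidable) image; conversely, any mono with decidable image factors as an iso onto its image followed by the inclusion of a direct summand, and so is itself isomorphic to a coproduct inclusion.

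For $(3) \Leftrightarrow (4)$ I would unwind the statements in the internal language of $\catc$ as in section \ref{sec:form-intern-lang}. The coherence condition makes the fibre of $f \circ k$ over $y \in Y_z$ canonically isomorphic to $\Sigma_{x \in X_y}\, R_{z,y,x}$. The requirement that $f \circ k$ be monic translates into each such $\Sigma$-type being a subsingleton, while decidability of its image translates into each such $\Sigma$-type being either empty or inhabited; together these say each fibre is either empty or a singleton. Since the $\Sigma$-type $\Sigma_{x \in X_y} R_{z,y,x}$ is a singleton precisely when exactly one $x \in X_y$ has $R_{z,y,x}$ inhabited and that $R_{z,y,x}$ is itself a singleton, this is exactly condition (4).

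The main step is $(2) \Rightarrow (1)$, and this is where I expect the real work. Assuming $f \circ k$ isomorphic to a coproduct inclusion, write $Y \cong Y_1 + Y_2$ with $Y_2 \cong R$ and $f \circ k$ identified with $\iota_2$ under this iso. Extensivity of $\catc$ then gives a decomposition $X \cong f^{-1}(Y_1) + f^{-1}(Y_2)$, and $k$ factors through the second summand. Setting $X_1 := f^{-1}(Y_1)$ and replacing $f^{-1}(Y_2)$ by $Y_2$ itself produces a diagram of the form \eqref{eq:decpolydiagram}. The main obstacle is verifying that this replacement yields an isomorphism between the new polynomial with reductions and the original: intuitively, the reduction equations collapse every element of the fibre $f^{-1}(y)$ for $y \in Y_2$ to the single representative picked out by $k$, so shrinking $f^{-1}(Y_2)$ down to $Y_2$ makes no essential difference to the generated pointed polynomial endofunctor. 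Made precise, this should follow by the same sort of diagram chase as in the earlier proposition replacing $k$ by the inclusion of its image.
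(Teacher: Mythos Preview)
The paper states this proposition without proof, so there is nothing to compare your argument against directly. Your cycle $(1)\Rightarrow(2)\Leftrightarrow(3)\Leftrightarrow(4)$ is sound and cleanly argued; in particular your unwinding of $(3)\Leftrightarrow(4)$ via the fibre $(f\circ k)^{-1}(y)\cong \Sigma_{x\in X_y} R_{z,y,x}$ is exactly right.

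The step $(2)\Rightarrow(1)$, however, has a real gap, and it exposes an imprecision in the statement itself rather than a fixable oversight in your sketch. You correctly locate the obstacle: after writing $Y\cong Y_1+Y_2$ and $X\cong f^{-1}(Y_1)+f^{-1}(Y_2)$, the summand $f^{-1}(Y_2)$ need not be isomorphic to $Y_2$. Your proposed fix is to ``shrink'' $f^{-1}(Y_2)$ down to $Y_2$ and argue that this makes no difference to the \emph{pointed polynomial endofunctor}. That last claim is correct and easy to verify (for $y\in Y_2$ the left leg of the defining pushout \eqref{eq:30} becomes a coproduct inclusion, so the extra arity contributes a summand that cancels in the pushout). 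But it does \emph{not} give an isomorphism of the underlying diagrams. A concrete obstruction: take $Z=1$, $Y$ a singleton, $X$ two points both mapping to it, and $R$ a singleton with $k$ hitting just one of them. Then $(2)$--$(4)$ all hold, yet no diagram of shape \eqref{eq:decpolydiagram} can be isomorphic to this one, since that shape forces the arity $X_y$ of any reducing constructor $y\in Y_2$ to be a singleton.

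So either ``isomorphic'' in (1) is to be read at the level of the induced pointed endofunctor (which is all that matters for the applications in section~\ref{sec:constr-init-algebr-2}, and under which reading your sketch goes through), or (1) needs a minor correction. In either case you have put your finger on precisely the delicate point; just be explicit about which notion of isomorphism your argument actually establishes, rather than sliding between ``isomorphism of polynomials with reductions'' and ``isomorphism of pointed endofunctors'' as in your final paragraph.
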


\begin{definition}
  We say a polynomial with reductions is
  \emph{decidable} if it satisfies one of
  the equivalent conditions in proposition \ref{prop:decpolyequivs}.
\end{definition}

\begin{definition}
  When we are working in the internal logic of the locally cartesian
  closed category, and $y \in Y_z$, we will say \emph{$y$ does not
    reduce} if $R_{z, y, x}$ is empty for all $x$, and we will say
  \emph{$y$ reduces at $x$} if $x$ is unique such that $R_{z, y, x}$
  is inhabited.
\end{definition}

\begin{definition}
  We say a polynomial with reductions in presheaves is
  \emph{locally decidable} if its image in $\catc/\smcat{C}_0$ after
  forgetting the action is decidable.
\end{definition}

\begin{proposition}
  \label{prop:locdecboolean}
  Suppose that $\catc$ is a boolean topos with natural number
  object. Then a pointed polynomial endofunctor is decidable if and
  only if $f \circ k$ is monic. Similarly if $\catc$ is a category
  internal presheaves over a boolean topos with natural number object,
  then a pointed polynomial endofunctor is locally decidable if and
  only if $f \circ k$ is monic.
\end{proposition}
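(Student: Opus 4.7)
The plan is to reduce the claim to condition (3) of Proposition \ref{prop:decpolyequivs}, which characterises decidability as $f \circ k$ being a monomorphism with decidable image. The forward implication of the current proposition (decidable implies $f \circ k$ monic) is then immediate from that characterisation, so the content lies entirely in the converse. Concretely, assuming $f \circ k$ is monic, I have to show that its image in $Y$ is automatically a decidable subobject, and then the presheaf case has to be pulled back to the topos case.

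For the first claim, where $\catc$ is a boolean topos, the argument is essentially a one-liner: in a boolean topos every subobject is complemented, and a complemented subobject is decidable. Applied to the monomorphism $f \circ k \colon R \to Y$, this tells us that its image is a decidable subobject of $Y$, verifying condition (3) of Proposition \ref{prop:decpolyequivs} and thus giving decidability.

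For the second claim, I will pass to the underlying morphism in $\catc/\smcat{C}_0$. The forgetful functor from the category of internal presheaves on $\smcat{C}$ to $\catc/\smcat{C}_0$ preserves and reflects monomorphisms, so if $f \circ k$ is monic in presheaves then its underlying morphism is monic in $\catc/\smcat{C}_0$. Since slices of boolean toposes are again boolean toposes, I can invoke the first claim in the slice, obtaining that the underlying polynomial with reductions is decidable in $\catc/\smcat{C}_0$, which is precisely the definition of local decidability.

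I do not anticipate a genuine obstacle here: the proposition is essentially a repackaging of Proposition \ref{prop:decpolyequivs} together with the standard fact that a boolean topos has only decidable subobjects. The only place requiring a modicum of care is confirming that monomorphisms in the category of internal presheaves are detected by the underlying morphism in $\catc/\smcat{C}_0$; this is standard but should be mentioned. The natural number object hypothesis does not, as far as I can see, play any role in this specific argument; it is presumably included so that the ambient setting matches the other hypotheses used throughout the paper.
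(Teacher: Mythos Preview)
The paper states Proposition~\ref{prop:locdecboolean} without proof, evidently regarding it as immediate from Proposition~\ref{prop:decpolyequivs}. Your argument is correct and is exactly the intended reading: condition~(3) of Proposition~\ref{prop:decpolyequivs} reduces decidability to $f \circ k$ being monic with decidable image, and in a boolean topos every subobject is complemented, so the ``decidable image'' clause is automatic. Your handling of the presheaf case is also sound: the forgetful functor to $\catc/\smcat{C}_0$ preserves limits (hence monomorphisms) and is faithful (hence reflects monomorphisms), and slices of boolean toposes are boolean, so the first claim applies in $\catc/\smcat{C}_0$. Your remark that the natural number object plays no role in this particular argument is accurate; the hypothesis is ambient to the paper rather than essential here.
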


Given a polynomial with reductions in a category of presheaves, it
makes sense to talk about it being locally decidable and it also makes
sense to talk about the polynomial with reductions being decidable
internally in the category of presheaves.
It's important to note the distinction between the
two notions.

Every decidable polynomial with reductions is also
locally decidable, but the converse does not hold in general.
Given a morphism $\sigma \colon c \rightarrow d$ in the
internal category $\smcat{C}$, locally decidability says that any
$y \in Y(c)$ either lies in the image of $f_c \circ k_c$ or does not,
and the same for $y \in Y(d)$. In any case we know that if
$y \in Y(c)$ belongs to the image of $f_c \circ k_c$ then also
$Y(\sigma)(y)$ belongs to the image of $f_d \circ k_d$. Decidability
states that the converse also holds, so if $Y(\sigma)(y)$ lies in the
image of $f_d \circ k_d$, then $y$ lies in the image of
$f_c \circ k_c$. In order to get a result applicable to the CCHM model
of type theory, we need it to apply to locally decidable pointed
polynomial endofunctors that aren't decidable. Explicitly, we need to
allow for the case of $y \in Y(c)$ that does not belong to the image
of $f_c \circ k_c$ but where $Y(\sigma)(y)$ does belong to the image
of $f_d \circ k_d$, or informally ``$\sup(y, \alpha)$ does not yet
reduce at $c$, but will reduce at $d$.''

\subsection{Construction of the Initial Algebras}
\label{sec:constr-init-algebr-2}

Assume we are given a polynomial with reductions of the form
\eqref{eq:27} that is locally decidable. We will construct an initial
algebra for the corresponding pointed endofunctor, showing that
$W$-types with reductions exist for all locally decidable polynomials
with reductions (theorem \ref{thm:initialalgps}).

\subsubsection{Normal Forms}
\label{sec:norm-forms-norm}

We first form a variant of the dependent $W$-type $W_0$ that we used
in the construction of dependent $W$-types in presheaves. We call this
$N_0$, and define it as follows. For $c \in \smcat{C}_0$ and
$z \in Z(c)$, we add an element $\sup(y, \alpha)$ to $N(c, z)$
whenever $y \in Y(c, z)$ with $y \notin \im(f \circ k)$ and
$\alpha \in \Pi_{d \in \smcat{C}_0} \Pi_{\sigma \colon c \rightarrow
  d} \Pi_{x \in X(c, z, y)} N_0(d, h_d(Y(\sigma)(y)))$. For the moment
we don't add any naturality condition. Note that if $W_0$ is the
corresponding $W$-type over all elements of $Y$ (again, with the
naturality condition dropped), then we have a canonical monomorphism
$i \colon N_0 \rightarrow W_0$ over $Z$. We refer to elements of $N_0$
as \emph{normal forms}. In other words we only
consider those terms that do not reduce because they have constructor
$y \in Y$ whose fibre over $f \circ k$ is empty. Like with $W_0$, we
can define for each $\tau \colon c \rightarrow c'$ and each
$z \in Z(c)$, a map
$N_0(\tau) \colon N_0(c, z) \rightarrow N_0(c', Z(\tau)(z))$. Any
element of $N_0(c, z)$ is of the form $\sup(y, \alpha)$. Define
$\alpha'$ the same as for $W_0$. Note that $\sup(Y(\tau)(y), \alpha')$
is not necessarily an element of $N_0(c', Z(\tau)(z))$, since
$Y(\tau)(y)$ might reduce. However, by local decidability we
can split into two cases: either $Y(\tau)(y)$ reduces
or it does not. If it does not, we define
$N_0(\tau)(\sup(y, \alpha))$ to be $\sup(Y(\tau)(y), \alpha')$, the
same as for $W_0$. If $Y(\tau)(y)$ reduces, at $x$, say,
define $N_0(\tau)(\sup(y, \alpha))$ to be
$\alpha(\tau, x)$. Unlike with $W_0$, this does not make $N_0$ into a
presheaf over $Z$. We will see why in the proof of lemma
\ref{lem:nfsarepresheaf}.



\subsubsection{The Presheaf of Natural Normal Forms}
\label{sec:presh-struct-norm}

By analogy with $W$ in section \ref{sec:dependent-w-types}, we define
a subobject $N$ of $N_0$. Given $\sup(y, \alpha) \in N_0(c, z)$, we
say it is \emph{natural} if for all $\sigma \colon c \rightarrow d$
and $\tau \colon d \rightarrow e$ in $\smcat{C}$ and all
$x \in X_{Y(\sigma)(y)}$, we have
$N_0(\tau)(\alpha(\sigma, x)) = \alpha(\tau \circ \sigma,
X(\tau)(x))$. We define the subobject $N$ of $N_0$ of
\emph{hereditarily natural} elements to be those of the form
$\sup(y, \alpha)$ which are natural and such that for all
$\sigma \colon c \rightarrow d$ and all $x \in X_{Y(\sigma)(y)}$,
$\alpha(\sigma, x)$ is hereditarily natural. Formally, we can define
this object using the same technique as for lemma \ref{lem:herednat}.

Note that for each $\tau \colon c \rightarrow c'$, $N_0(\tau)$
restricts to a map $N(c, z) \rightarrow N(c', Z(\tau)(z))$. We now
verify that this does give an internal presheaf.

\begin{lemma}
  \label{lem:nfsarepresheaf}
  $N$ with the action of morphisms defined above is a presheaf.
\end{lemma}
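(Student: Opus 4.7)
The plan is to verify the two presheaf axioms $N(1_c) = \mathrm{id}_{N(c,z)}$ and $N(\tau_2 \circ \tau_1) = N(\tau_2) \circ N(\tau_1)$ directly from the definition of the action, using the case split provided by local decidability together with the hereditary naturality built into the definition of $N$. For the identity axiom, note that any $\sup(y,\alpha) \in N(c,z)$ has $y \notin \im(f_c \circ k_c)$, so $Y(1_c)(y) = y$ does not reduce; the non-reducing branch of the definition of $N_0(1_c)$ yields $\sup(y,\alpha')$ with $\alpha'(\sigma,x) = \alpha(\sigma \circ 1_c, x) = \alpha(\sigma,x)$, recovering $\sup(y,\alpha)$.

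For composition, fix $\tau_1 \colon c \to d$, $\tau_2 \colon d \to e$ and $\sup(y,\alpha) \in N(c,z)$. By local decidability we may split on whether $Y(\tau_1)(y)$ reduces and, independently, on whether $Y(\tau_2 \circ \tau_1)(y)$ reduces. When $Y(\tau_1)(y)$ does not reduce, the intermediate term is $\sup(Y(\tau_1)(y),\alpha')$ with $\alpha'(\sigma,x) = \alpha(\sigma \circ \tau_1, x)$, and the further case split on $Y(\tau_2 \circ \tau_1)(y)$ gives either two $\sup$ expressions that agree on the nose by associativity of composition in the internal category, or else the value $\alpha'(\tau_2, x_0) = \alpha(\tau_2 \circ \tau_1, x_0)$ which also matches the direct computation $N_0(\tau_2 \circ \tau_1)(\sup(y,\alpha))$. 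These cases are routine and do not use either naturality or the inductive hypothesis.

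The main obstacle is the remaining case, where $Y(\tau_1)(y)$ reduces, at some $x_0$ say, so that $N_0(\tau_1)(\sup(y,\alpha)) = \alpha(\tau_1, x_0)$. Write $x_0 = k_d(r_0)$ for the unique $r_0$ provided by local decidability. Since $k$ and $f$ are morphisms of presheaves, naturality yields $X(\tau_2)(x_0) = k_e(R(\tau_2)(r_0))$ and $f_e(X(\tau_2)(x_0)) = Y(\tau_2)(f_d(x_0)) = Y(\tau_2 \circ \tau_1)(y)$, so $Y(\tau_2 \circ \tau_1)(y)$ reduces at $X(\tau_2)(x_0)$, and by local decidability this is the unique such reduction. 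Hence $N_0(\tau_2 \circ \tau_1)(\sup(y,\alpha)) = \alpha(\tau_2 \circ \tau_1, X(\tau_2)(x_0))$. On the other hand, the naturality clause imposed by membership $\sup(y,\alpha) \in N$ applied with $\sigma = \tau_1$, $\tau = \tau_2$ and $x = x_0$ gives exactly $N_0(\tau_2)(\alpha(\tau_1, x_0)) = \alpha(\tau_2 \circ \tau_1, X(\tau_2)(x_0))$, so the two sides again agree. This is the crux of the argument, since it is the only place where both local decidability (to transport the reduction witness $r_0$ along $\tau_2$) and the naturality axiom defining $N$ (to permute $N_0(\tau_2)$ past $\alpha$) are essential; the remainder of the proof is just the case bookkeeping above.
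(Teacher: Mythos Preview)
Your proof is correct and follows essentially the same approach as the paper's. The paper's proof is more terse: it declares the identity axiom and the non-reducing case of composition ``straightforward'' and gives only the chain of equalities for the reducing case, invoking naturality of $\alpha$ for the key step $N_0(\tau)(\alpha(\sigma,x)) = \alpha(\tau\circ\sigma, X(\tau)(x))$; you spell out all cases and also explain why $Y(\tau_2\circ\tau_1)(y)$ reduces at $X(\tau_2)(x_0)$, but the structure and the crux of the argument are identical.
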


\begin{proof}
  It is straightforward to check that the action preserves identities.

  Now suppose we are given $\sigma \colon c \rightarrow d$ and
  $\tau \colon d \rightarrow e$. We need to verify that for all
  $v \in N(c, z)$, $N(\tau \circ \sigma)(v) =
  N(\tau)(N(\sigma)(v))$. We know that $v$ must be of the form
  $\sup(y, \alpha)$. The equation is straightforward to check when
  $Y(\sigma)(y)$ does not reduce. Hence we just show the case when
  $Y(\sigma)(y)$ reduces at $x \in X_y$, for which we will need
  naturality. Note that $Y(\tau \circ \sigma)(y)$ reduces at
  $X(\tau)(x)$.
  \begin{align*}
    N(\tau)(N(\sigma)(\sup(y, \alpha)))
    &= N(\tau)(\alpha(\sigma, x)) \\
    &= N_0(\tau)(\alpha(\sigma, x)) \\
    &= \alpha(\tau \circ \sigma, X(\tau)(x)) & \text{by naturality} \\
    &= N(\tau \circ \sigma)(\sup(y, \alpha))
  \end{align*}
\end{proof}

\subsubsection{The Algebra Structure}
\label{sec:algebra-structure}

It only remains to check that $N$ really is an initial algebra. In
this section we define the algebra structure $s$. We will use the
presentation we saw in section \ref{sec:form-intern-lang} where an
algebra structure is an algebra structure for the underlying
dependent polynomial endofunctor that satisfies the reduction
equations. We need to define $s_{z,c}(y, \alpha)$ whenever
$\alpha \colon f^\ast(\yoneda (c)) \rightarrow h^\ast(N)$. As
explained in section \ref{sec:dependent-w-types}, this is just an
element of
$\Pi_{\Sigma_{\sigma \colon c \rightarrow d} X(Y(\sigma)(y))} N(d,
Z(\sigma)(z))$ that satisfies the naturality condition. We split into
cases depending on whether $y$ reduces. If it does,
then we define $s(y, \alpha)$ to be $\alpha(x)$ where $y$ reduces at
$x$. Otherwise,
we take $s(y, \alpha)$ to be the element $\sup(y, \alpha)$ in $N_0$,
which in fact lies in $N$ since it is clearly hereditarily natural by
the fact that $\alpha$ maps into $N$ and is natural. We also need to
show that $s$ is natural, which we do in the lemma below.

\begin{lemma}
  The operation $s_{z,c}$ defined above is natural in the following
  sense. For any $\tau \colon c \rightarrow c'$ in $\smcat{C}$ and
  $z \in Z(c)$, we have the following commutative diagram (where the
  dependent product is the one internal in the category of
  presheaves).
  \begin{equation*}
    \xymatrix{ \Sigma_g \Pi_f h^\ast(N)(c) \ar[r]^{s_c} \ar[d] & N(c)
      \ar[d]^{N(\tau)} \\
      \Sigma_g \Pi_f h^\ast(N)(c') \ar[r]_{s_{c'}} & N(c')}
  \end{equation*}
\end{lemma}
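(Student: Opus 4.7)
The plan is a case split on whether $y$ reduces at $c$, and further on whether $Y(\tau)(y)$ reduces at $c'$, which is legitimate externally in $\catc$ by local decidability. Fix $(y,\alpha) \in \Sigma_g \Pi_f h^\ast(N)(c)$. Recall from section \ref{sec:dependent-w-types} that the action of $\tau$ on the downstairs polynomial functor sends $(y,\alpha)$ to $(Y(\tau)(y), \alpha')$ with $\alpha'(\sigma, x) = \alpha(\sigma \circ \tau, x)$; in particular $\alpha'(1_{c'}, x) = \alpha(\tau, x)$.

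First suppose $y$ does not reduce at $c$, so $s_c(y, \alpha) = \sup(y, \alpha)$ by the non-reducing clause in the definition of $s$. If $Y(\tau)(y)$ also does not reduce at $c'$, then $s_{c'}(Y(\tau)(y), \alpha') = \sup(Y(\tau)(y), \alpha')$, which matches $N(\tau)(\sup(y, \alpha))$ by the first clause of the action on $N_0$. If instead $Y(\tau)(y)$ reduces at some $x$ at $c'$, then the second clause of the action gives $N(\tau)(\sup(y, \alpha)) = \alpha(\tau, x)$, while $s_{c'}(Y(\tau)(y), \alpha') = \alpha'(1_{c'}, x) = \alpha(\tau, x)$, so the two sides agree.

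Now suppose $y$ reduces at $c$, at some $x_0 \in X_y$. Since $f$ and $k$ are morphisms of presheaves, functoriality forces $Y(\tau)(y)$ to lie in the image of $f_{c'} \circ k_{c'}$ at $X(\tau)(x_0)$, and uniqueness of the reduction point (the second half of local decidability) ensures $Y(\tau)(y)$ reduces precisely at $X(\tau)(x_0)$. Hence $s_c(y, \alpha) = \alpha(1_c, x_0)$ and $s_{c'}(Y(\tau)(y), \alpha') = \alpha'(1_{c'}, X(\tau)(x_0)) = \alpha(\tau, X(\tau)(x_0))$. Applying $N(\tau)$ to $\alpha(1_c, x_0)$ and using the naturality condition built into the definition of $\Pi_f h^\ast(N)$ (with $\sigma = 1_c$), we get $N(\tau)(\alpha(1_c, x_0)) = \alpha(\tau, X(\tau)(x_0))$, matching the other side.

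The main subtlety is the reducing case: this is precisely where we consume the fact that $\alpha$ takes values in $N$ and satisfies the naturality condition rather than landing in the naively constructed $N_0$. The observation that reductions transport along $\tau$ (reduction at $c$ forces reduction at $c'$ at the transported index, with unique witness) is what allows the two clauses in the definition of $N_0(\tau)$ to line up with the two clauses defining $s_c$ and $s_{c'}$, and this transport is exactly what local decidability combined with functoriality of $f \circ k$ provides.
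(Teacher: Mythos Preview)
Your proof is correct and follows exactly the same three-case split as the paper's own proof (neither reduces, only $Y(\tau)(y)$ reduces, $y$ reduces), but you have written out in full the verifications that the paper dismisses as ``straightforward to check.'' In particular, your observation that the reducing case is where the naturality condition on $\alpha$ is actually consumed is precisely the content the paper leaves implicit.
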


\begin{proof}
  Let $(y, \alpha) \in \Sigma_g \Pi_f h^\ast(N)(c)$.

  There are three cases to consider. Either neither
  $y$ nor $Y(\tau)(y)$ reduces, or $Y(\tau)(y)$ reduces but not
  $y$, or $y$ reduces. The first case is essentially
  the same as for ordinary $W$-types in presheaves, and the other two
  cases are straightforward to check.
\end{proof}

Finally, we also need to check the reduction equations. However, note
that they hold internally if and only if they hold pointwise, and it
is clear that they do by the definition of $N$ and $s$.

We can now deduce the following lemma.
\begin{lemma}
  The operation $s_c$ defined above gives $N$ the structure of an
  algebra over the given pointed polynomial endofunctor.
\end{lemma}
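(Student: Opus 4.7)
The plan is to combine the pieces that have already been assembled. By the characterisation of $P_{f,g,h,k}$-algebra structures given in section \ref{sec:form-intern-lang}, specifying such a structure on $N$ amounts to two things: (i) an algebra structure $s \colon \Sigma_g \Pi_f h^\ast(N) \to N$ for the underlying polynomial endofunctor, taken in the category of internal presheaves over $Z$; and (ii) verification of the reduction equations, namely that for each $z$, each $y \in Y_z$ and each $x \in X_y$ such that $R_{z,y,x}$ is inhabited, $s(y,\alpha) = \alpha(x)$.

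For (i), I would invoke the immediately preceding lemma, which shows that the pointwise operations $s_{z,c}$ are natural in $c$, so they assemble into a morphism in internal presheaves over $Z$. Combined with the description of $\Sigma_g \Pi_f h^\ast(N)$ in internal presheaves from section \ref{sec:dependent-w-types} (in terms of hereditarily natural dependent functions), this gives precisely an algebra structure for the underlying polynomial endofunctor internally.

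For (ii), both $s(y,\alpha)$ and $\alpha(x)$ are sections of $N$ over $Z$, so an equation between them holds internally if and only if it holds stagewise at every $c \in \smcat{C}_0$. By local decidability, at each such stage either $y$ does not reduce or it reduces at a unique $x$; the reduction constraint only applies in the second case, and there $s_{z,c}$ was defined so that $s(y,\alpha) = \alpha(x)$ on the nose, as already observed just before the lemma statement.

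I do not anticipate a genuine obstacle here, since this lemma is really an assembly step: both halves of the argument have already been done. The only point requiring any care is the justification that the reduction equations can be checked pointwise, but this is routine because equality of sections of an internal presheaf reduces to stagewise equality.
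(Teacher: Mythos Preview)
Your proposal is correct and follows the same approach as the paper. The paper likewise treats this lemma as an assembly step: it first establishes naturality of $s_{z,c}$ in the preceding lemma to obtain the algebra structure for the underlying polynomial endofunctor in presheaves, then observes that the reduction equations hold internally if and only if they hold pointwise, and that they hold pointwise by the very definition of $s$.
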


\subsubsection{Proof of Initiality}
\label{sec:proof-initiality}

We now show that the algebra structure we have defined really is
initial. Suppose we are given an internal presheaf $A$ with the
structure of an algebra over the pointed polynomial endofunctor.  As
before we use the presentation in section \ref{sec:form-intern-lang},
where we view an algebra over the pointed endofunctor as
an algebra structure over the dependent polynomial endofunctor
$\Sigma_g \Pi_f h^\ast$, which we'll write as
$r \colon \Sigma_g \Pi_f h^\ast(A) \rightarrow A$, such that this
algebra structure satisfies the reduction equations. We need to define
a structure preserving map $t \colon N \rightarrow A$, and show that
it is the unique such map.

The basic idea for the definition of $t$ is fairly simple. Given
$\sup(y, \alpha)$ in $N(c, z)$, we want to define $t(\sup(y, \alpha))$
to be $r(y, t \circ \alpha)$. This is however quite tricky to
formalise, since $r(y, t \circ \alpha)$ is only well defined when we
know that $t \circ \alpha$ is natural, but this only makes sense when
we have already defined at least some of $t$. This issue already
occurs for
ordinary $W$-types in presheaves, but is especially
relevant here, where the proof of naturality is more
difficult. What we need to do is to simultaneously show that $t$ is
natural while we are defining it, since then we can deduce that
$t \circ \alpha$ is also natural, and so $r(y, t \circ \alpha)$ is
well defined.

To help us with this, we define another presheaf $T$, again using
dependent $W$-types in $\catc$ over $Z$, where we modify the
definition of $N$ by adding in also elements of $A$. We will in fact
construct $T$ in several stages, first using a dependent $W$-type,
$T_0$, then taking a succession of inductively defined subobjects
$T_1$, $T_2$ and finally $T$. In each case, we'll just give the
inductive definition, but in fact they can all be constructed in
arbitrary locally cartesian closed categories with $W$-types using
similar techniques to those in the proof of lemma \ref{lem:herednat}.

We first define the dependent $W$-type, $T_0$ by the following
inductive definition.

Let $c \in \smcat{C}$ and $z \in Z(c)$. Suppose that we are given
$y \in Y(c, z)$ such that $y$ does not reduce,
$a \in A(c, z)$ and $\alpha$ in
$\Pi_{\Sigma_{\sigma \colon c \rightarrow d} X(Y(\sigma)(y))} T_0(d,
Z(\sigma)(z))$. Then $T_0(c, z)$ contains an element of the form
$\sup(y, a, \alpha)$.

Note that we have a projection $\pi_0 \colon T_0 \rightarrow N_0$ over
$Z$ by simply ``forgetting'' the $a$'s. We also have a projection
$\pi_1 \colon T_0 \rightarrow A$ given by $\pi_1(\sup(y, a, \alpha))
:= a$.

We define $T_0(\tau) \colon T_0(c, z) \rightarrow T_0(c', Z(\tau)(z))$ the
same as for $N_0(\tau)$.  We now define $T_1$ to be the subobject of
$T_0$ of hereditarily natural elements, which is defined exactly the
same as in $N$. It follows that $\pi_0$ restricts to a function
$T_1 \rightarrow N$. We also have naturality in the following lemma.
\begin{lemma}
  \label{lem:proj0nat}
  Let $\tau \colon c \rightarrow c'$. Then $T_0(\tau)$ restricts to a
  morphism $T_1(\tau) \colon T_1(c, z) \rightarrow T_1(c',
  Z(\tau)(z))$. This makes $T_1$ into a presheaf, and the restriction
  of $\pi_0$ into a natural transformation.
\end{lemma}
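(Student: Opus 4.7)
The plan is to follow the template of Lemma \ref{lem:nfsarepresheaf} essentially verbatim. Each of the three claims---preservation of hereditary naturality under $T_0(\tau)$, functoriality of the restricted action, and naturality of $\pi_0$---reduces by local decidability to a case split on whether the relevant constructor reduces, and in each branch to a routine recursive check that mirrors the one already carried out for $N$.

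For preservation of hereditary naturality, let $\sup(y, a, \alpha) \in T_1(c, z)$ and $\tau \colon c \rightarrow c'$. By local decidability either $Y(\tau)(y)$ reduces or it does not. In the non-reducing case $T_0(\tau)(\sup(y, a, \alpha)) = \sup(Y(\tau)(y), A(\tau)(a), \alpha')$ with $\alpha'(\sigma, x) := \alpha(\sigma \circ \tau, x)$; naturality of $\alpha'$ is inherited from that of $\alpha$, and hereditary naturality of each $\alpha'(\sigma, x)$ is the hereditary naturality of $\alpha(\sigma \circ \tau, x)$, which holds by hypothesis. In the reducing case, at $x$ say, the image is $\alpha(\tau, x)$, already in $T_1$ by hypothesis on $\alpha$. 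Identity preservation is then immediate (since $y$ itself does not reduce and $\alpha'(\sigma, x) = \alpha(\sigma, x)$), and for composition I would repeat the three-case argument from Lemma \ref{lem:nfsarepresheaf}: the only non-trivial case is when $Y(\sigma)(y)$ reduces at some $x$, where the required equation $T_0(\tau)(\alpha(\sigma, x)) = \alpha(\tau \circ \sigma, X(\tau)(x))$ is exactly the naturality of $\alpha$ evaluated at $(\sigma, \tau, x)$. Naturality of $\pi_0$ is similarly a case split: in the non-reducing case both sides unfold to $\sup(Y(\tau)(y), \pi_0 \circ \alpha')$, and in the reducing case both sides equal $\pi_0(\alpha(\tau, x))$.

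The main obstacle, as flagged in the paper's remarks about interpretation in general locally cartesian closed categories, will be formalising these ostensibly inductive arguments without appealing to a large-elimination principle on $T_1$. I would handle this in the style of Lemma \ref{lem:herednat}: the restriction of $T_0(\tau)$ to $T_1$, the functoriality identities, and the naturality square for $\pi_0$ should all be exhibited by constructing morphisms into suitable auxiliary dependent $W$-types over $\catc$ and then cutting out equalisers, so that the whole construction remains available in any locally cartesian closed category with $W$-types. Beyond that, everything is bookkeeping that duplicates the argument already carried out for Lemma \ref{lem:nfsarepresheaf}.
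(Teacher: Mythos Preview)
Your proposal is correct and matches the paper's approach exactly: the paper's entire proof is the one-line observation that since the construction of $T_1$ from $T_0$ mimics that of $N$ from $N_0$, the same argument as in Lemma~\ref{lem:nfsarepresheaf} goes through verbatim. You have simply spelled out the case split and the formalisation via auxiliary $W$-types and equalisers that the paper leaves implicit.
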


\begin{proof}
  Since we mimicked the construction of $N$ from $N_0$, it's clear
  that we can use the same proof as in lemma \ref{lem:nfsarepresheaf}
  to show $T_1$ is a presheaf and that $\pi_0$ is natural.
\end{proof}

We now define a subobject $T_2$ of $T_1$ by the following inductive
definition. Given, $\sup(y, a, \alpha) \in T_1$, we say $\sup(y, a,
\alpha)$ belongs to $T_2$ if the following hold.
\begin{enumerate}
\item If $\sigma \colon c \rightarrow d$ is such that
  $Y(\sigma)(y)$ reduces at $x$, then $A(\sigma)(a) =
  \pi_1(\alpha(\sigma, x))$.
\item For all $\sigma \colon c \rightarrow d$ and $x \in X(d,
  Y(\tau)(y))$, $\alpha(\sigma, x) \in T_2$.
\end{enumerate}

We can now show the following lemma.
\begin{lemma}
  \label{lem:proj1nat}
  The restriction of $\pi_1$ to $T_2$ is natural.
\end{lemma}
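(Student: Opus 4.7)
The plan is to reduce the naturality of $\pi_1$ restricted to $T_2$ to a case analysis on whether $Y(\tau)(y)$ reduces, using local decidability of the polynomial. For a generic element $v = \sup(y, a, \alpha) \in T_2(c, z)$ and a morphism $\tau \colon c \to c'$ of $\mathcal{C}$, what I want is $\pi_1(T_2(\tau)(v)) = A(\tau)(a)$. Since $T_2 \hookrightarrow T_1$ and $T_1(\tau)$ is just the restriction of $T_0(\tau)$, it suffices to compute $\pi_1(T_0(\tau)(v))$.

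Before doing the computation itself, I would verify that $T_2$ really is closed under $T_1(\tau)$, so the restriction of $\pi_1$ to $T_2$ makes sense as a morphism of presheaves. This means showing that conditions (1) and (2) in the definition of $T_2$ persist after applying $T_0(\tau)$. Condition (2) is essentially built in by the way the subobject is specified inductively, and condition (1) for $T_0(\tau)(v)$ at any $\sigma \colon c' \to d$ with $Y(\sigma)(Y(\tau)(y))$ reducing follows from the corresponding condition for $v$ at the composite $\sigma \circ \tau$, combined with functoriality of $A$.

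For the main computation, local decidability lets me split into two cases. If $Y(\tau)(y)$ does not reduce, then $T_0(\tau)(v)$ is, by the same recipe as $N_0(\tau)$ extended to the $a$-component by $A(\tau)$, of the form $\sup(Y(\tau)(y), A(\tau)(a), \alpha')$, and applying $\pi_1$ gives $A(\tau)(a)$ directly. If instead $Y(\tau)(y)$ reduces at some $x$, then by the definition of the action in the reducing case $T_0(\tau)(v) = \alpha(\tau, x)$, so $\pi_1(T_0(\tau)(v)) = \pi_1(\alpha(\tau, x))$; and this is exactly the right-hand side of condition (1) in the definition of $T_2$, instantiated at $\sigma = \tau$, which equates it to $A(\tau)(a)$.

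The main obstacle is essentially absent here, because the hard work was done earlier when setting up $T_2$: condition (1) was introduced precisely so that the reducing case of this naturality statement falls out on the nose. The only points needing care are the bookkeeping for showing $T_2$ is preserved by the presheaf action, and making sure the convention for how $T_0(\tau)$ acts on the $a$-component in the non-reducing case is fixed so that $\pi_1$ comes out natural — which forces it to act by $A(\tau)$ there.
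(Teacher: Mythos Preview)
Your proposal is correct and follows essentially the same approach as the paper: a case split on whether $Y(\tau)(y)$ reduces, with the non-reducing case immediate and the reducing case handled precisely by clause (1) in the definition of $T_2$. You are in fact more careful than the paper, which leaves implicit both the closure of $T_2$ under the presheaf action and the convention that $T_0(\tau)$ acts on the $a$-component by $A(\tau)$ in the non-reducing case; your handling of these points is fine.
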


\begin{proof}
  Suppose we are given $\sup(y, a, \alpha) \in T_2(c, z)$. We need to
  show that
  $\pi_1(T_2(\tau)(\sup(y, a, \alpha))) = A(\tau)(\pi_1(\sup(y, a,
  \alpha)))$. This is clear when $Y(\tau)(y)$ does not reduce.
  When $Y(\tau)(y)$ does reduce it's still
  clear, but we need to use the clause added to the definition of
  $T_2$ (it does not hold for $T_1$).
\end{proof}

The key point is that naturality in the definition of $T_1$ ensures
that we also have naturality for the composition of $\alpha$ with
projection to $A$, in the following sense.
\begin{lemma}
  For each $\sup(y, a, \alpha)$ in $T_2(c, z)$, $\pi_1 \circ \alpha$
  is natural.
\end{lemma}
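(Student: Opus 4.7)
The plan is to combine two naturality facts already established: the naturality inherent in the definition of $T_1$ (applied to $\sup(y,a,\alpha)$), and the naturality of $\pi_1$ on $T_2$ from lemma \ref{lem:proj1nat}. Specifically, unfolding what needs to be shown: given $\sigma \colon c \to d$, $\tau \colon d \to e$ and $x \in X(Y(\sigma)(y))$, I need
\begin{equation*}
  A(\tau)\bigl(\pi_1(\alpha(\sigma,x))\bigr) \;=\; \pi_1\bigl(\alpha(\tau\circ\sigma, X(\tau)(x))\bigr).
\end{equation*}

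First I would observe that since $\sup(y,a,\alpha) \in T_2 \subseteq T_1$, the element is in particular hereditarily natural as an element of $T_1$. Applied to $\sigma$, $\tau$ and $x$, naturality of $\alpha$ in $T_1$ reads $T_0(\tau)\bigl(\alpha(\sigma,x)\bigr) = \alpha(\tau\circ\sigma, X(\tau)(x))$. Applying $\pi_1$ to both sides gives $\pi_1\bigl(T_0(\tau)(\alpha(\sigma,x))\bigr) = \pi_1\bigl(\alpha(\tau\circ\sigma, X(\tau)(x))\bigr)$.

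Next I would use that the second clause in the definition of $T_2$ ensures $\alpha(\sigma,x) \in T_2$, so that the action $T_0(\tau)$ restricts to $T_2(\tau)$ on this element. By lemma \ref{lem:proj1nat}, $\pi_1$ commutes with the action on $T_2$, so
\begin{equation*}
  \pi_1\bigl(T_0(\tau)(\alpha(\sigma,x))\bigr) \;=\; A(\tau)\bigl(\pi_1(\alpha(\sigma,x))\bigr).
\end{equation*}
Chaining these two equalities yields the desired identity, which is precisely the naturality of $\pi_1 \circ \alpha$. There is no real obstacle here since the hard work was absorbed into the construction of $T_1$ and $T_2$ and the previous lemma; the only thing to be slightly careful about is making sure one is using naturality of $\alpha$ in $T_1$ (concerning $T_0(\tau)$) together with naturality of $\pi_1$ in $T_2$ (concerning $T_2(\tau)$), and that these agree on elements of $T_2$ because $T_2(\tau)$ is just the restriction of $T_0(\tau)$.
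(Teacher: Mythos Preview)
Your proof is correct and is exactly the argument the paper has in mind: the paper's own proof is the one-line remark that this is ``straightforward from the definition of $T_1$ (together with the observation that the same then applies when restricting to the subobject $T_2$) and lemma~\ref{lem:proj1nat},'' and what you have written is precisely the unpacking of that remark.
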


\begin{proof}
  This is straightforward from the definition of $T_1$ (together with
  the observation that the same then applies when restricting to the
  subobject $T_2$) and lemma \ref{lem:proj1nat}.
\end{proof}

We now know that the expression $r(y, \pi_1 \circ \alpha)$ is well
defined, which finally allows us to define $T$ as the subobject of
$T_2$ defined inductively as follows. An element $\sup(y, a, \alpha)$
of $T_2$ belongs to $T$ if both of the conditions below hold.
\begin{enumerate}
\item $a = r(y, \pi_1 \circ \alpha)$
\item For all $\sigma \colon c \rightarrow d$ and $x \in X(d,
  Y(\tau)(y))$, $\alpha(\sigma, x) \in T$.
\end{enumerate}

We can now show the main lemma.
\begin{lemma}
  Let $T$ be as above. Then $\pi_0 \colon T \rightarrow N$ is an
  isomorphism.
\end{lemma}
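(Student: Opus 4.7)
The plan is to construct a two-sided inverse $\phi \colon N \to T$ to $\pi_0$, using the initial algebra property of the dependent $W$-type underlying $N_0$. Informally, $\phi$ is specified by the recursive formula
$$\phi(\sup(y, \alpha)) \;=\; \sup\bigl(y,\; r(y,\, \pi_1 \circ \phi \circ \alpha),\; \phi \circ \alpha\bigr).$$
First I would define $\phi$ at the level of the ambient $W$-types as a morphism $\phi_0 \colon N_0 \to T_0$ using the initial algebra structure on $N_0$. A subtlety is that $r(y, \pi_1 \circ \phi \circ \alpha)$ only makes sense once $\pi_1 \circ \phi \circ \alpha$ is known to be natural, so to obtain an honest algebra structure on $T_0$ one either picks an arbitrary default value in $A$ when naturality fails, or extends $r$ to non-natural inputs via a retraction onto the natural part. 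I would then show by induction on elements of $N$ that $\phi_0$ restricts to a map $N \to T$ and that $\pi_1 \circ \phi \circ \alpha$ is indeed natural on that restriction, so the output of the recursion genuinely matches the intended formula.

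Next I would verify the two compositions are identities by induction on the $W$-type structure. For $\pi_0 \circ \phi = 1_N$, evaluating on $\sup(y, \alpha) \in N$ gives $\sup(y, \pi_0 \circ \phi \circ \alpha) = \sup(y, \alpha)$ by the inductive hypothesis. For $\phi \circ \pi_0 = 1_T$, evaluating on $\sup(y, a, \alpha) \in T$ yields $\sup\bigl(y,\, r(y, \pi_1 \circ \phi \circ \pi_0 \circ \alpha),\, \phi \circ \pi_0 \circ \alpha\bigr)$, which by induction equals $\sup\bigl(y,\, r(y, \pi_1 \circ \alpha),\, \alpha\bigr)$; the first defining clause of $T$, namely $a = r(y, \pi_1 \circ \alpha)$, then identifies this with $\sup(y, a, \alpha)$. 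The clauses in the definitions of $T_1$ and $T_2$ that guarantee naturality of both $\pi_0$ and $\pi_1$ (lemmas \ref{lem:proj0nat} and \ref{lem:proj1nat}) are exactly what is required to make the reindexing in these inductive steps go through, and the second clause in the definition of $T$ ensures that the recursion propagates correctly through subtrees.

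The main obstacle is step one: rigorously formalising the recursive definition of $\phi$ in an arbitrary locally cartesian closed category with $W$-types. The formula for $\phi$ is entangled with the naturality hypothesis that allows $r$ to be applied, and as discussed in the introduction this kind of simultaneous induction on a construction and a property of it is not directly available outside of a stratified pseudotopos. The way to circumvent this, in the same spirit as the constructions of $W$, $T_1$, $T_2$ and $T$ earlier in the section, is to define $\phi$ as a morphism of underlying $W$-types together with an auxiliary subobject carving out the elements on which the construction behaves well, and then verify by initial-algebra induction that this subobject contains all of $N$.
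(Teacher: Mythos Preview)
Your approach is essentially the paper's: both produce the inverse to $\pi_0$ via the recursive formula $\sup(y,\alpha) \mapsto \sup\bigl(y,\, r(y,\pi_1\circ\phi\circ\alpha),\, \phi\circ\alpha\bigr)$, and both identify the same obstacle, namely that $r$ can only be applied once naturality of $\pi_1\circ\phi\circ\alpha$ is established. The paper packages the argument as a single induction on $N$ showing each fibre $\pi_0^{-1}(\{v\})$ is a singleton, which folds existence and uniqueness of $\phi$ into one pass rather than verifying the two compositions separately.

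Two points to watch. First, your workaround of choosing a ``default value'' in $A$ to extend $\phi_0$ to all of $N_0$ need not be available: nothing guarantees $A(c,z)$ is inhabited, and $N_0(c,z)$ can be nonempty even when $A(c,z)$ is empty. The paper's singleton-fibre formulation avoids this because the inductive hypothesis already supplies $\pi_0^{-1}$ as a total function on the range of $\alpha$, so the recursion is never evaluated off $N$. Second, you pass over the verification that the constructed element lies in $T_2$ (not merely $T_1$); this is precisely where the reduction equations for the algebra $(A,r)$ enter, and the paper isolates it: when $Y(\tau)(y)$ reduces at $x$ one must check $A(\tau)\bigl(r(y,\pi_1\circ\phi\circ\alpha)\bigr) = \pi_1\bigl(\phi(\alpha(\tau,x))\bigr)$, which holds by naturality of $r$ combined with the reduction equations. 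Your outline would benefit from making this step explicit.
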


\begin{proof}
  We show by induction on the construction of $N$ that for all $v \in
  N$, the fibre $\pi_0^{-1}(\{v\})$ in $T$ contains exactly one
  element.

  Suppose we are given an element of $N(c, z)$ of the form
  $\sup(y, \alpha)$. Clearly any element of
  $\pi_0^{-1}(\sup(y, \alpha))$ must be of the form
  $\sup(y, r(y, \pi_1 \circ \pi_0^{-1} \circ \alpha), \pi_0^{-1} \circ
  \alpha)$. We just need to check that this really is a well defined
  expression and that it belongs to $T$ (as opposed to just $T_0$,
  say).

  In the above, we were just using $\pi_0^{-1}$ as a convenient
  notation for a partial function, rather than a total inverse. Note
  however, that the induction hypothesis tells us that $\pi_0^{-1}
  \circ \alpha$ is a well defined function and the
  usual proof that the levelwise inverse of a
  natural transformation is natural still applies and,
  together with lemma \ref{lem:proj0nat} and the naturality of $\alpha$,
  allows us
  to show that $\pi_0^{-1}
  \circ \alpha$ is natural.

  It follows from the above together with lemma
  \ref{lem:proj1nat} that $\pi_1 \circ \pi_0^{-1} \circ \alpha$ is
  natural and so $r(y, \pi_1 \circ \pi_0^{-1} \circ \alpha)$
  is a well defined expression. Hence
  $\sup(y, r(y, \pi_1 \circ \pi_0^{-1} \circ \alpha), \pi_0^{-1} \circ
  \alpha)$ is a valid expression for an element of $T_0$. We just need
  to show that it belongs to the subobject $T$.

  From the naturality of $\pi_0^{-1} \circ \alpha$ that we've already
  seen, it's clear that
  $\sup(y, r(y, \pi_1 \circ \pi_0^{-1} \circ \alpha), \pi_0^{-1} \circ
  \alpha)$ belongs to $T_1$.

  To show it belongs to $T_2$, we need to show that when
  $\tau \colon c \rightarrow d$ is such that
  $Y(\tau)(y)$ reduces at $x$, we have
  $A(\tau)(r(y, \pi_1 \circ \pi_0^{-1} \circ \alpha)) =
  \pi_1(\pi_0^{-1}(\alpha(\tau, x)))$. However, this follows directly
  from the naturality of $r$ together with the fact that $r$ was
  required to
  respect the reduction equations.

  It's now clear that
  $\sup(y, r(y, \pi_1 \circ \pi_0^{-1} \circ \alpha), \pi_0^{-1} \circ
  \alpha)$ belongs to $\pi_0^{-1}(\{\sup(y, \alpha)\})$ in $T$ and that in
  fact it's the unique such object.
\end{proof}

We can now define $t \colon N \rightarrow A$ to be $\pi_1 \circ
\pi_0^{-1}$. We now just need to check that it is a structure
preserving map, and unique with this property.

\begin{lemma}
  The map $t \colon N \rightarrow A$ defined by $\pi_1 \circ
  \pi_0^{-1}$ is a natural transformation that is structure preserving
  and is the unique such map.
\end{lemma}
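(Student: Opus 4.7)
The plan is to verify naturality, structure preservation, and uniqueness in that order, leveraging the previous lemma which already gives us the isomorphism $\pi_0 \colon T \to N$.

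For naturality, I would first confirm that $T$ is itself a subpresheaf of $T_2$ (this was not quite spelled out earlier): the defining clause $a = r(y,\pi_1\circ\alpha)$ is stable under the action of any $\tau \colon c \to c'$, using naturality of $r$ in the case $Y(\tau)(y)$ does not reduce, and using the fact that $r$ respects the reduction equations in the case $Y(\tau)(y)$ reduces at some $x$. Once $T$ is known to be a sub-presheaf and $\pi_0$ a presheaf isomorphism, naturality of $t = \pi_1 \circ \pi_0^{-1}$ is immediate from Lemma \ref{lem:proj1nat}, which already gave naturality of $\pi_1$ on $T_2 \supseteq T$.

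For structure preservation, I would use the reformulation from Section \ref{sec:form-intern-lang} and verify that $t$ is an algebra map for the underlying dependent polynomial endofunctor which respects reductions. Given $(y,\alpha)$ in $\Sigma_g \Pi_f h^\ast(N)(c)$, there are two cases. If $y$ does not reduce, then $s_c(y,\alpha) = \sup(y,\alpha)$, and the construction of $\pi_0^{-1}$ in the previous lemma gives
\[
\pi_0^{-1}(\sup(y,\alpha)) = \sup\bigl(y,\ r(y,\pi_1\circ\pi_0^{-1}\circ\alpha),\ \pi_0^{-1}\circ\alpha\bigr),
\]
so applying $\pi_1$ yields $t(\sup(y,\alpha)) = r(y, t\circ\alpha)$ as required. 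If $y$ reduces at $x$, then $s_c(y,\alpha) = \alpha(x)$, and the desired equation $t(\alpha(x)) = r(y, t\circ\alpha)$ is exactly the reduction equation holding in $A$. That $t$ preserves the reductions on the $N$ side is then automatic.

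For uniqueness, suppose $t' \colon N \to A$ is another natural, structure-preserving map. I would define by dependent $W$-recursion on $N$ a map $\rho \colon N \to T_0$ with the intended specification
\[
\rho(\sup(y,\alpha)) = \sup\bigl(y,\ t'(\sup(y,\alpha)),\ \rho\circ\alpha\bigr),
\]
then verify successively that $\rho$ factors through $T_1$ (using naturality of $t'$ and the inductive naturality of $\rho$), through $T_2$ (using that $t'$ respects the reduction equations, so that when $Y(\sigma)(y)$ reduces at $x$ one has $A(\sigma)(t'(\sup(y,\alpha))) = t'(\alpha(\sigma,x))$), and finally through $T$ itself. The last step relies on structure preservation of $t'$ for non-reducing $y$, which gives $t'(\sup(y,\alpha)) = r(y, t'\circ\alpha)$; combined with the inductive identity $\pi_1 \circ \rho = t'$ on the sub-terms, this yields the defining clause of $T$. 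Since plainly $\pi_0 \circ \rho = \mathrm{id}_N$, the uniqueness half of the previous lemma forces $\rho = \pi_0^{-1}$, whence $t' = \pi_1 \circ \rho = t$.

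The main obstacle will be the inductive bookkeeping for uniqueness, specifically that $\rho$ lands in $T$ rather than merely $T_2$: one needs the coincidence $\pi_1\circ\rho = t'$ on sub-terms \emph{in the course of defining $\rho$}, which is automatic in an intuitive inductive argument but requires care to express as an algebra map out of the appropriate dependent $W$-type in an arbitrary locally cartesian closed category, by the same technique used earlier for $N$ and $T$.
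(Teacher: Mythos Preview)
Your proposal is correct. The naturality and structure-preservation arguments match the paper's (very terse) proof: naturality from Lemmas~\ref{lem:proj0nat} and~\ref{lem:proj1nat}, and structure preservation by the same two-case split on whether $y$ reduces.

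For uniqueness you take a genuinely different route. The paper observes that it suffices to consider only the non-reducing case: every element of $N$ is of the form $\sup(y,\alpha)$ with $y$ non-reducing, so structure preservation at such $y$ forces $t'(\sup(y,\alpha)) = r(y, t'\circ\alpha)$, and a direct induction on $N$ gives $t' = t$. Your argument instead builds a section $\rho \colon N \to T_0$, pushes it down through $T_1$, $T_2$, $T$, and then invokes the isomorphism $\pi_0$ from the preceding lemma. Both are valid. The paper's route is shorter and highlights the conceptual point that the reducing case carries no information for uniqueness; your route has the virtue of reusing the $T$-machinery already in place and making the formal translation to an arbitrary locally cartesian closed category more explicit (as you note in your final paragraph). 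Your extra step of checking that $T$ is a sub-presheaf of $T_2$ is a genuine detail the paper glosses over, and is needed for your naturality argument to go through cleanly.
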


\begin{proof}
  Naturality follows from lemmas \ref{lem:proj0nat} and
  \ref{lem:proj1nat}.

  To show that $t$ is structure preserving, we again need to split
  into two cases depending on whether there is a reduction. However,
  both cases are straightforward to show from the definition.

  It's also clear from the definition that $t$ is the unique structure
  preserving map, and in fact for uniqueness it's sufficient just to
  look at the case where there is no reduction.
\end{proof}

We can now deduce the main theorem of this section.
\begin{theorem}
  \label{thm:initialalgps}
  In any category of internal presheaves in a locally cartesian closed
  category with disjoint coproducts, every locally decidable
  pointed polynomial endofunctor has an initial
  algebra.
\end{theorem}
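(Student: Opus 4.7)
The plan is to construct the initial algebra as a carefully chosen subobject of a dependent $W$-type, using the construction of dependent $W$-types in internal presheaves from Section \ref{sec:dependent-w-types} as a template. By local decidability, the summand of $Y$ consisting of constructors that do not reduce is well defined in $\catc/\smcat{C}_0$. I first form a dependent $W$-type $N_0$ in $\catc/\smcat{C}_0$ whose constructors range only over the non-reducing part of $Y$ and whose arities are $X_y$ together with all morphisms $\sigma \colon c \to d$ in $\smcat{C}$, analogous to the $W_0$ of Section \ref{sec:dependent-w-types}. Intuitively, $N_0$ is the object of ``normal forms'' — trees whose root is never something the reduction equations collapse.

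The next step is to define an action of morphisms on $N_0$ that behaves like a presheaf action up to firing a reduction at the root. Given $\tau \colon c \to c'$ and $\sup(y, \alpha) \in N_0(c, z)$, local decidability lets me decide whether $Y(\tau)(y)$ reduces: if not, I apply the usual presheaf-style reindexing; if $Y(\tau)(y)$ reduces at $x$, I instead return $\alpha(\tau, x)$. This action will not yet be functorial on $N_0$, so I cut down to the subobject $N$ of hereditarily natural elements using the same trick as in Lemma \ref{lem:herednat}. On $N$, naturality forces the two competing descriptions of the action on a composite to agree, so $N$ becomes a genuine presheaf over $Z$.

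With $N$ in hand, the algebra structure is essentially forced: for $(y, \alpha)$ with $y$ not reducing, return $\sup(y, \alpha)$ (which lies in $N$ because $\alpha$ does and $\alpha$ is natural); when $y$ reduces at $x$, return $\alpha(x)$. Local decidability makes this case split legal, and the reduction equations hold by construction. To prove initiality against an algebra $(A, r)$, the naive idea is to define $t \colon N \to A$ recursively by $t(\sup(y, \alpha)) = r(y, t \circ \alpha)$, but $r$ requires its argument to be natural, while $t$ is only being defined. I would resolve this by constructing an auxiliary dependent $W$-type $T_0$ in $\catc/\smcat{C}_0$ whose nodes carry both a normal-form symbol and a candidate $A$-value, and then successively cutting down to three nested subobjects: $T_1$ imposing hereditary naturality, $T_2$ imposing compatibility of the stored $A$-value with the reductions visible after reindexing, and finally $T$ imposing that the stored $A$-value equals $r$ applied to the subtrees. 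The map $t$ is then extracted as $\pi_1 \circ \pi_0^{-1}$, where the two projections send a $T$-tree to its underlying $N$-normal-form and to its stored $A$-value.

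The central obstacle is proving that $\pi_0 \colon T \to N$ is an isomorphism by induction on $N$. This simultaneously produces $t$ and verifies its naturality, breaking the apparent circularity in the recursive definition. The hardest case is at a node $\sup(y, \alpha)$ where some reindex of $y$ reduces but $y$ itself does not: here one must check that the candidate $A$-value is compatible both with the reductions $r$ is required to satisfy and with the naturality of $A$, which ultimately comes down to the fact that $r$ was assumed to respect the reduction equations. Uniqueness of $t$ then follows because any structure-preserving natural transformation must also be $\pi_1 \circ \pi_0^{-1}$, and in fact only needs to be pinned down on non-reducing nodes, since the reducing nodes are forced by the algebra axioms.
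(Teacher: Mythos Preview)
Your proposal is correct and follows essentially the same approach as the paper: the construction of $N_0$ as a $W$-type of normal forms with the reduction-aware action, the restriction to the hereditarily natural subobject $N$ via the technique of Lemma~\ref{lem:herednat}, the case-split algebra structure, and the initiality proof via the chain $T_0 \supset T_1 \supset T_2 \supset T$ with $t = \pi_1 \circ \pi_0^{-1}$ all match the paper's argument almost step for step. Your identification of the key difficulty (simultaneously defining $t$ and proving its naturality, resolved by showing $\pi_0$ is an isomorphism) is exactly the point the paper emphasises as well.
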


\section{$W$-Types with Reductions in Classical Logic}
\label{sec:w-types-with-1}

We will see in this section how to construct all $W$-types with
reductions in boolean toposes with natural number object. We have
already seen the main idea in the previous section. Every topos is a
category of internal presheaves over itself via the trivial category,
and in this case locally decidable is the same as decidable. For a
boolean topos, a polynomial with reductions is decidable just when the
map $f \circ k$ is monic. This only leaves the case where $f \circ k$
is not monic. What this says is that the same constructor can reduce
in more than one place. The key point is that when we know that this
happens, things become trivial, in the following sense.

\begin{lemma}
  \label{lem:classicalcollapses}
  Suppose we are given a polynomial with reductions of the form
  \eqref{eq:27}. Let $(A_z)_{z \in Z}$ be a family of types over $Z$
  with algebra structure given by $c$ (which we will view as an
  algebra on the underlying polynomial that satisfies the reduction
  equations). Suppose that for some $z \in Z$ there is a constructor
  $y \in Y_z$ that reduces in two distinct places $x_1 \neq x_2 \in
  X_y$ and there exists a dependent function $\alpha : \Pi_{x \in X_y}
  A_{h(z)}$. Then $A_z$ contains exactly one element.
\end{lemma}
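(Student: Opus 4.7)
The plan is to exploit the fact that the reduction equations at the two distinct positions $x_1$ and $x_2$ collapse the algebra. Since $c$ is required to satisfy the reduction equations and $y$ reduces at both $x_1$ and $x_2$, for every $\beta \colon \Pi_{x \in X_y} A_{h(x)}$ we get
\[
c(y, \beta) \;=\; \beta(x_1) \quad \text{and} \quad c(y, \beta) \;=\; \beta(x_2),
\]
and hence $\beta(x_1) = \beta(x_2)$. The coherence condition gives $h(x_i) = g(f(x_i)) = g(y) = z$ for $i=1,2$, so both values lie in the same fibre $A_z$. This is the only place the reduction equations are really used.

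Next I would deduce uniqueness of elements of $A_z$ by modifying $\alpha$. Given an arbitrary $a \in A_z$, define $\beta \colon \Pi_{x \in X_y} A_{h(x)}$ by $\beta(x_1) := a$ and $\beta(x) := \alpha(x)$ otherwise. This is where classical logic is used: since the ambient topos is boolean, the singleton $\{x_1\} \hookrightarrow X_y$ has a complement, so the two partial functions glue to a well-defined total function into the family $A_{h(-)}$. Applying the collapse from the previous paragraph to $\beta$ and using $x_2 \neq x_1$ gives
\[
a \;=\; \beta(x_1) \;=\; \beta(x_2) \;=\; \alpha(x_2),
\]
so every element of $A_z$ equals the fixed element $\alpha(x_2)$.

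Inhabitedness is immediate: $c(y, \alpha) \in A_z$ (or, equivalently, $\alpha(x_2)$ itself). Combining the two parts, $A_z$ is a singleton, as claimed.

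The main, and essentially only, obstacle is justifying the case-split construction of $\beta$ inside the internal language. In a boolean topos this is routine, since decidability of the subobject $\{x_1\} \hookrightarrow X_y$ allows the two branches to be combined; nothing about $W$-types, initial algebras, or induction is needed for this lemma, which is precisely why it makes the rest of the classical argument go through cleanly.
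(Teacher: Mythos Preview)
Your proof is correct and follows essentially the same approach as the paper: use classical logic to modify the given $\alpha$ at specific positions and then apply the two reduction equations to force any two elements of $A_z$ to coincide. The only cosmetic difference is that the paper modifies $\alpha$ at \emph{both} $x_1$ and $x_2$ (inserting arbitrary $a_1, a_2$ and concluding $a_1 = c(y,\alpha') = a_2$), whereas you modify only at $x_1$ and compare against the fixed value $\alpha(x_2)$; both variants rest on the same idea and the same use of booleanness.
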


\begin{proof}
  First of all, note that $A_z$ contains at least one element using
  the algebra structure, which is $c(y, \alpha)$.

  Next, suppose that $a_1$ and $a_2$ are both elements of $A_z$. Then
  we define a new dependent function $\alpha'$ as follows.
  \begin{equation*}
    \alpha'(x) :=
    \begin{cases}
      a_1 & x = x_1 \\
      a_2 & x = x_2 \\
      \alpha(x) & \text{otherwise}
    \end{cases}
  \end{equation*}
  Note that the coherence condition ensures that this is still a
  dependent function of type $\Pi_{x : X_y} A_{h(x)}$. Also note that
  we needed classical logic to show this is a well defined function.

  Then the reduction equation at $x_1$ tell us $c(y, \alpha') = a_1$,
  and the reduction equation at $x_2$ tells us $c(y, \alpha') =
  a_2$. Hence $a_1 = a_2$. Therefore, $A_z$ contains exactly one
  element.
\end{proof}

We will now use this idea to construct any $W$-type with
reductions. We aim towards the following theorem.
\begin{theorem}
  \label{thm:classicalwr}
  Let $\catc$ be a boolean topos with natural number object. Then
  $\catc$ has all $W$-types with reductions.
\end{theorem}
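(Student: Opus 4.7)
The plan is to reduce to the decidable case, which Theorem~\ref{thm:initialalgps} handles: viewing $\catc$ as the category of internal presheaves over the trivial internal category in itself, locally decidability coincides with decidability, and since $\catc$ has $W$-types (being a boolean topos with natural number object), the theorem applies to every decidable pointed polynomial endofunctor. Given a polynomial with reductions $(f,g,h,k)$, first apply the proposition after Definition~\ref{def:polyred} to assume without loss of generality that $k$ is monic; by Proposition~\ref{prop:locdecboolean} the polynomial is then decidable precisely when $f \circ k$ is also monic.

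Using booleanness, decompose $Y$ as a coproduct $Y_u + Y_a$, where $Y_u$ consists of unambiguous constructors (those with at most one $x \in X_y$ for which $R_x$ is inhabited) and $Y_a$ of ambiguous ones (those with at least two such $x$). By Proposition~\ref{prop:ppercoprod} the pointed polynomial endofunctor decomposes accordingly as a coproduct $P_u + P_a$ of pointed endofunctors, so its initial algebras are precisely the joint initial $P_u, P_a$-algebras. The $P_u$-part is decidable, so Theorem~\ref{thm:initialalgps} supplies its initial algebra $I_u$.

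By Lemma~\ref{lem:classicalcollapses}, any joint algebra $A$ must have a singleton fibre $A_z$ whenever some $y \in Y_a \cap Y_z$ admits an $\alpha \in \Pi_{x \in X_y} A_{h(x)}$. I propose to build the desired initial algebra by collapsing exactly the fibres of $I_u$ forced to be singletons. Define $Z_\top \subseteq Z$ as the least subobject $Z'$ satisfying the closure rule: for each $z \in Z$, if some $y \in Y_a \cap Y_z$ has $\Pi_{x \in X_y} (I_u/{\sim_{Z'}})(h(x))$ inhabited (where $\sim_{Z'}$ identifies each fibre of $I_u$ over $Z'$ to a point), then $z \in Z'$. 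This least fixed point exists by monotonicity and the internal completeness of the subobject lattice in a topos. Set $I := I_u/{\sim_{Z_\top}}$; the $P_u$-structure descends to $I$, while the singleton constraint on $Z_\top$ supplies the required $P_a$-structure.

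The main obstacle lies in proving initiality of $I$. Given a joint $P_u, P_a$-algebra $A$, initiality of $I_u$ produces a unique $P_u$-algebra map $I_u \to A$, and the task is to show it factors (uniquely) through the quotient $I_u \twoheadrightarrow I$. By Lemma~\ref{lem:classicalcollapses}, each $z$ at which the image of $I_u$ in $A$ makes an ambiguous constructor applicable forces $A_z$ to be a singleton, so the image of $I_u$ in $A$ over such $z$ collapses. A careful internal induction along the least-fixed-point construction of $Z_\top$ then shows that precisely the fibres of $I_u$ over $Z_\top$ are identified by the map to $A$, yielding the desired factorisation; propagating this collapse step by step and verifying that no additional identifications are needed is the technical heart of the argument.
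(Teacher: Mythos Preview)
Your overall plan --- split $Y$ into unambiguous and ambiguous constructors, handle the former via the decidable case, and use Lemma~\ref{lem:classicalcollapses} to collapse certain fibres to singletons --- is essentially the paper's strategy. The gap is in the construction of the collapsed object: the claim that ``the $P_u$-structure descends to $I = I_u/{\sim_{Z_\top}}$'' is false, because a fibrewise collapse over $Z_\top$ is not a $P_u$-congruence. If $y \in Y_u \cap Y_z$ is non-reducing with $z \notin Z_\top$ but some arity element has $h(x) \in Z_\top$, then distinct lifts of a single $\alpha \in \Pi_{x}I_{h(x)}$ back to $I_u$ yield distinct elements $\sup(y,\alpha') \neq \sup(y,\alpha'')$ of $(I_u)_z = I_z$, so $c(y,\alpha)$ is ill-defined. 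Concretely, take $Z=\{0,1\}$; over $1$ put two nullary non-reducing constructors and one ambiguous binary constructor reducing at both places (all arities with $h$ landing in $1$); over $0$ put one non-reducing unary constructor with $h(a)=1$. Then $(I_u)_1$ and $(I_u)_0$ each have two elements and $Z_\top=\{1\}$, so your $I$ has $|I_1|=1$ but still $|I_0|=2$; yet every joint algebra has $A_1$ a singleton and hence $A_0$ a singleton, so the true initial algebra has both fibres of size one and your $I$ is not even an algebra.

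The paper avoids this by not quotienting: for each subobject $C \subseteq Z$ it rebuilds a fresh dependent $W$-type $W^C$ whose only constructor over $z\in C$ is a single nullary one (forcing $W^C_z$ to be a singleton), while over $z\notin C$ it keeps the original non-reducing constructors with arities recursively landing in $W^C$. The collapse then propagates automatically through the $W$-type recursion, and the paper shows directly that $W^{C_0}$ (for $C_0$ the least closed subobject) carries an algebra structure and is initial. Replacing your fibrewise quotient by this reconstruction repairs the argument.
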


We first define a useful construction.  Suppose we are
given a subobject $C \subseteq Z$. Then we construct a new polynomial
as follows. We work over the same context $Z$. For $z \in C$, we
define the set of constructors $Y'_z$ to consist of exactly one
element $\ast$, with empty arity $X'_\ast := \emptyset$.

Otherwise, for $z \notin C$, we define $Y'_z$ to be the subobject of
$Y_z$ consisting of those $y$ with no reductions. That is, those where
$R_{y, x} = \emptyset$ for all $x \in X_y$. We define the arity $X'_y$
to be $X_y$.

Write $W^C$ for the resulting $W$-type on the polynomial. Observe that
for $z \in C$, $W^C_z$ has exactly one element, of the form
$\sup(\ast, \emptyset)$, where $\ast$ is the only constructor over
$z$.

\begin{remark}
  For the special case $C = \emptyset$, this gives us the definition
  of normal forms like in section \ref{sec:norm-forms-norm}. For the
  special case $C = Z$, the resulting $W$-type contains exactly one
  element in every fibre of $z \in Z$.
\end{remark}

We say that $C$ is \emph{closed} if whenever $z \in Z$ is such that
there exists a constructor $y \in Y_z$ that reduces in two distinct
places $x_1 \neq x_2$ and there exists some dependent function
$\alpha : \Pi_{x : X_y} W^C_{h(x)}$, then we have $z \in C$.

We then define $C_0$ to be the intersection of all closed sets
$C$.

\begin{lemma}
  \label{lem:c0isclosed}
  $C_0$ is itself closed.
\end{lemma}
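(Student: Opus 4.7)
The plan is to show $z \in C$ for every closed $C$, so that $z \in \bigcap_C C = C_0$. So suppose $z \in Z$ witnesses the closure condition for $C_0$: there is $y \in Y_z$ with two distinct reduction points $x_1 \neq x_2 \in X_y$ and a dependent function $\alpha \colon \Pi_{x : X_y} W^{C_0}_{h(x)}$. Let $C$ be an arbitrary closed subobject of $Z$; I want to transfer this data to $C$ and apply closedness of $C$ to get $z \in C$.

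The main step is to construct, for each closed $C$ (so in particular with $C_0 \subseteq C$), a morphism of families $\phi_C \colon W^{C_0} \to W^C$ over $Z$. This will be defined by recursion on $W^{C_0}$. Using booleanness of $\catc$, classical case analysis on $z \in C$ is allowed. Given an element of $W^{C_0}_z$ of the form $\sup(y', \beta)$: if $z \in C$, send it to the unique element $\sup(\ast, \emptyset)$ of $W^C_z$; if $z \notin C$ (hence also $z \notin C_0$), then $y' \in Y'_z$ is a non-reducing constructor of $Y_z$, and this is also a constructor in the polynomial defining $W^C$ with arity $X_{y'}$, so we can send $\sup(y', \beta)$ to $\sup(y', \phi_C \circ \beta)$. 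Formally, $\phi_C$ arises as the initial algebra map once we observe that $W^C$ carries a canonical algebra structure over the polynomial defining $W^{C_0}$, given by the above case split.

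Given $\phi_C$, composition with $\alpha$ yields a dependent function $\phi_C \circ \alpha \colon \Pi_{x : X_y} W^C_{h(x)}$. The original $y$, $x_1 \neq x_2$ still witness the two-reduction condition (this condition is a property of the original polynomial with reductions and is independent of $C$), so the closure of $C$ now forces $z \in C$. Since $C$ was an arbitrary closed subobject, $z$ lies in every closed set and hence in $C_0$, which proves $C_0$ itself is closed.

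The main obstacle, and essentially the only nontrivial point, is giving a clean construction of $\phi_C$ in the internal language of $\catc$. The type-theoretic description above is intuitive but the appeal to ``$z \in C$ or $z \notin C$'' must be justified: in a boolean topos this is fine because subobjects are complemented, so we genuinely get a decomposition $Z \cong C + (Z \setminus C)$ over which we can split the recursive definition, and the universal property of the dependent $W$-type $W^{C_0}$ then produces $\phi_C$ as a morphism over $Z$. Everything else — the fact that $Y'_z$ for $W^{C_0}$ includes into the $Y'_z$ for $W^C$ when $z \notin C$, and that closure of $C$ is a property of the ambient polynomial with reductions and not of $W^{C_0}$ — is immediate from the definitions.
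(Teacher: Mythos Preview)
Your proof is correct and follows essentially the same approach as the paper: both construct a map $W^{C_0} \to W^C$ over $Z$ by recursion using the boolean case split on membership in $C$, then postcompose with $\alpha$ to invoke closedness of $C$. Your additional remarks on why booleanness justifies the case split and on realising the recursion via the initial algebra property are welcome elaborations but do not change the argument.
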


\begin{proof}
  Let $z \in Z$ be such that there exists a constructor
  $y \in Y_z$ that reduces in two distinct places $x_1 \neq x_2$ and
  let $\alpha : \Pi_{x : X_y} W^{C_0}_{h(x)}$. We need to show that
  for any closed set $C$, $z \in C$, so let $C$ be an arbitrary closed
  set.

  We first construct a map $i \colon W^{C_0} \rightarrow W^C$ over $Z$
  recursively as follows. Suppose that $z' \in Z$, and we are given an
  element of $W^{C_0}_{z'}$ of the form $\sup(y, \alpha)$.

  First suppose that $z' \in C$. In this case we take $i(\sup(y,
  \alpha))$ to be the unique element of $W^C_{z'}$.

  Otherwise we know that $z' \notin C$. In that case, we define
  $i(\sup(y, \alpha))$ to be $\sup(y, i \circ \alpha)$, which is a
  valid element of $W^C_{z'}$ since $z' \notin C$, and also $z' \notin
  C_0$ (since $C_0 \subseteq C$).
  
  We then use $i$ to construct an element of
  $\Pi_{x \in X_y} W^C_{h(x)}$ defined by $i \circ \alpha$. But we can
  now deduce that $z \in C$.

  Since we showed $z \in C$ for any closed set, we have $z \in C_0$,
  and so $C_0$ is closed, as required.
\end{proof}

\begin{lemma}
  \label{lem:classicalalgstr}
  For any closed set $C$, we give $W^{C}$ an algebra structure $d$ for
  our given polynomial with reductions.
\end{lemma}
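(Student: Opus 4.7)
The plan is to define the algebra structure $d$ by case analysis, using booleanness of $\catc$ to split on whether $z \in C$ and on how many reductions the constructor $y$ has. Given $z \in Z$, $y \in Y_z$ and $\alpha \in \Pi_{x \in X_y} W^C_{h(x)}$, I want to produce $d(y,\alpha) \in W^C_z$ satisfying the reduction equations.

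First I would handle the easy case $z \in C$. By construction $W^C_z$ has a unique element $\sup(\ast, \emptyset)$, so there is no choice to make in defining $d(y,\alpha)$, and every reduction equation is automatic since both sides are forced to equal this unique element.

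Now suppose $z \notin C$. Since $C$ is closed, the presence of $\alpha \in \Pi_{x \in X_y} W^C_{h(x)}$ forbids $y$ from reducing at two distinct places --- otherwise closedness would give $z \in C$. Using booleanness, I split on whether $y$ reduces at all (i.e.\ whether the fibre of $f \circ k$ over $y$ is inhabited). If $y$ does not reduce, then $y$ lies in $Y'_z$ for the modified polynomial, so I set
\[
d(y,\alpha) := \sup(y, \alpha),
\]
which lives in $W^C_z$ by construction and vacuously satisfies the reduction equations. Otherwise $y$ reduces at a unique $x \in X_y$; I set $d(y,\alpha) := \alpha(x)$, where the coherence condition $g \circ f \circ k = h \circ k$ ensures $h(x) = z$, so $\alpha(x) \in W^C_z$, and the single reduction equation holds by definition.

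The main subtle point is ensuring that this case analysis yields a genuine morphism rather than a merely pointwise description. This is where booleanness of $\catc$ is essential: the subobjects "$z \in C$", "$y$ has no reduction" and "$y$ reduces at a unique $x$" are all complemented, and the argument above together with closedness of $C$ shows that $Y_z$ (in the case $z \notin C$) is partitioned into the non-reducing constructors and the constructors with exactly one reduction, never the three-way split one would have in the non-classical setting. The construction of $d$ can then be assembled from the pieces above by the universal property of coproducts applied to these decidable partitions, after which the verification that the reduction equations hold globally reduces to the two observations made in the case analysis.
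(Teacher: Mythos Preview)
Your proof is correct and follows essentially the same approach as the paper: the paper also defines $d(y,\alpha)$ by first testing $z \in C$, then using closedness of $C$ together with the given $\alpha$ to rule out the two-reduction case, and finally splitting on whether $y$ reduces (setting $d(y,\alpha)=\alpha(x)$ when it does and $\sup(y,\alpha)$ when it does not). Your extra paragraph about assembling the case split into a genuine morphism via booleanness is more explicit than the paper's one-line conclusion, but the argument is the same.
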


\begin{proof}
  Suppose we are given $y \in Y_z$ for some $z \in Z$, and a dependent
  function $\alpha : \Pi_{x \in X_y} W^{C}_{h(x)}$. To define
  $d(y, \alpha)$ we split into cases. Firstly, if $z \in C$, we take
  $d(y, \alpha)$ to be the unique element of $W^C_z$. Now consider
  just the case when $z \notin C$. If $y$ reduces in two different
  places, then we could show $z \in C$, since $C$ is closed, deriving
  a contradiction. Hence we may assume that $y$ either reduces exactly
  once, or not at all. We now proceed the same as in section
  \ref{sec:algebra-structure}. If $y$ reduces at $x$, we define
  $d(y, \alpha)$ to be $\alpha(x)$. Otherwise $y$ does not reduce at
  all, and so we can use the $W$-type structure and take
  $d(y, \alpha)$ to be $\sup(y, \alpha)$.

  This algebra structure clearly satisfies the reduction equations.
\end{proof}

\begin{lemma}
  $W^{C_0}$ with the algebra structure given in lemma
  \ref{lem:classicalalgstr} is initial.
\end{lemma}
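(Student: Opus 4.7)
My plan is to reduce initiality to an auxiliary subsingleton claim about algebras over $C_0$. Concretely, for any algebra $A$ of the pointed polynomial endofunctor with structure $c$, I would define
\[
V_A := \{ z \in Z : A_z \text{ has exactly one element} \}
\]
and first aim to show $C_0 \subseteq V_A$, so that in particular $A_z$ is a singleton whenever $z \in C_0$. This is the crucial step: once we have it, the map $t \colon W^{C_0} \to A$ is forced over $C_0$ (send the unique element of $W^{C_0}_z$ to the unique element of $A_z$), and forced by structure preservation over the complement of $C_0$.

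To show $C_0 \subseteq V_A$ it suffices, by minimality of $C_0$, to prove that $V_A$ is itself closed. I would first construct a map $\psi \colon W^{V_A} \to A$ by giving $A$ an algebra structure for the polynomial underlying $W^{V_A}$: over $z \in V_A$ we use the unique element of $A_z$ (which exists by definition of $V_A$), and over $z \notin V_A$ we interpret each non-reducing constructor $y$ with dependent input $\alpha$ as $c(y, \alpha)$. Then, to verify closedness, suppose we are given $z \in Z$, a constructor $y \in Y_z$ reducing at distinct points $x_1 \neq x_2$, and some $\alpha \colon \Pi_{x \in X_y} W^{V_A}_{h(x)}$. Composing with $\psi$ produces a dependent function $\Pi_{x \in X_y} A_{h(x)}$, and applying Lemma~\ref{lem:classicalcollapses} we conclude $|A_z| = 1$, so $z \in V_A$, as required.

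With $C_0 \subseteq V_A$ established, I would then define $t \colon W^{C_0} \to A$ by the analogous recipe (replacing $V_A$ by $C_0$), again using the $W$-type structure of $W^{C_0}$. To verify that $t$ preserves the full pointed polynomial algebra structure $d$ of Lemma~\ref{lem:classicalalgstr} rather than just the underlying $W$-algebra structure, it is enough to do the case analysis from that lemma: when $z \in C_0$ both $t(d(y, \alpha))$ and $c(y, t \circ \alpha)$ collapse to the unique element of $A_z$; when $z \notin C_0$ and $y$ does not reduce, the definition of $t$ gives the equation directly; and when $y$ reduces at a single $x$, both sides equal $t(\alpha(x))$ by the reduction equations satisfied by $c$. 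Uniqueness of $t$ falls out of the same analysis: over $C_0$ the value is forced by $|A_z| = 1$, and over its complement it is forced by structure preservation together with $W$-recursion on the $\sup(y, \alpha)$ constructors.

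The main obstacle I anticipate is the apparent circularity in constructing $\psi$, since its definition requires knowing $A_z$ is \emph{inhabited} over $V_A$, not merely that it has at most one element. This is precisely why $V_A$ is defined using the ``exactly one element'' condition, and why Lemma~\ref{lem:classicalcollapses} is stated with the stronger conclusion that $A_z$ has exactly one element (rather than as a mere quotient statement). With these ingredients in place, $\psi$ is a routine application of the initial algebra property of $W^{V_A}$, and the closure argument for $V_A$ is essentially a repackaging of Lemma~\ref{lem:classicalcollapses}.
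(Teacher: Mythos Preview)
Your proposal is correct and follows essentially the same route as the paper: the paper defines $C := \{ z \in Z : A_z \text{ has exactly one element} \}$ (your $V_A$), builds a map $j \colon W^C \to A$ by $W$-recursion, uses it together with Lemma~\ref{lem:classicalcollapses} to show $C$ is closed, and then deduces $C_0 \subseteq C$. The only cosmetic difference is that the paper obtains the final map as the composite $W^{C_0} \to W^C \xrightarrow{j} A$ (reusing the canonical map from the proof of Lemma~\ref{lem:c0isclosed}) rather than defining $t$ directly on $W^{C_0}$ as you do, but the two recipes clearly agree.
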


\begin{proof}
  Suppose we are given a family of types $(A_z)_{z \in Z}$ with
  algebra structure $c$. We need to show that there is a unique
  structure preserving map $i \colon W^{C_0} \rightarrow A$ over $Z$.

  We define $C$ to consist of those $z \in Z$ such that $A_z$ contains
  exactly one element. We now recursively define a map
  $j \colon W^C \rightarrow A$. Suppose we are given $z \in Z$, and
  $\sup(y, \alpha) \in W^C_z$. If $y = \ast$, then we must have
  $z \in C$. But then we can take $j(\sup(\ast, \alpha))$ to be the
  unique element of $A_z$. Otherwise, $y$ must be one of the original
  constructors in $Y_z$, and $\alpha : \Pi_{x : X_y} W^C_{h(x)}$. We
  define $j(\sup(y, \alpha))$ to be $c(y, j \circ \alpha)$.

  We can now deduce that $C$ is closed, since if we are given a
  constructor $y \in Y_z$ that reduces in two distinct places and a
  dependent function $\alpha : \Pi_{x : X_y} W^C_{h(x)}$, then by
  considering $j \circ \alpha$, we show by lemma
  \ref{lem:classicalcollapses} that $A_z$ has exactly one element, and
  so $z \in C$. But this implies that $C_0 \subseteq C$, and so we get
  a canonical map $W^{C_0} \rightarrow W^C$, as in the proof of lemma
  \ref{lem:c0isclosed}. Composing with $j$ gives us the map $W^{C_0}
  \rightarrow A$ over $Z$.

  However, it is now straightforward to check that this is the unique
  structure preserving map.
\end{proof}

We can now use the above lemma to deduce the main theorem
\ref{thm:classicalwr}.

\section{Cofibrantly Generated Awfs's in Codomain Fibrations}
\label{sec:cofibr-gener-awfss}

\subsection{Review of Lifting Problems over Codomain Fibrations}
\label{sec:liftprobreview1}

We recall some definitions from \cite[Section
7.5]{swanliftprob}\todo{maintain ref to other paper}.
Since we focus only on the special case of
codomain fibrations, we can simplify some of the definitions a little.

\begin{definition}
  \label{def:liftprob1}
  Let $f$ be a map in $\catc/I$ and let $g$ be a map in $\catc/J$. A
  \emph{family of lifting problems} from $f$ to $g$ over $K \in \catc$
  is diagram of the following form, where the squares on the left are
  both pullbacks.
  \begin{equation*}
    \begin{gathered}
      \xymatrix{ U \ar[d] & \sigma^\ast(U) \ar[l] \ar[r] \ar[d]
        \pullbackcorner[dl] & X \ar[d] \\
        V \ar[d] & \sigma^\ast(V) \ar[l] \ar[r] \ar[d]
        \pullbackcorner[dl] & Y \ar[d] \\
        I & K \ar[l]_\sigma \ar[r] & J
      }
    \end{gathered}
  \end{equation*}
  A \emph{solution} to the family of lifting problems is a map
  $\sigma^\ast(V) \rightarrow X$ making the upper right square into
  two commutative triangles.
\end{definition}

\begin{definition}
  \label{def:ulp}
  Let $f$ be a map in $\catc/I$ and let $g$ be a map in $\catc/J$. The
  \emph{universal family of lifting problems} from $f$ to $g$, is the
  family of lifting problems, where we define $K$ to be type below,
  \begin{equation*}
    \Sigma_{i : I} \Sigma_{j : J} \Sigma_{\beta : V(i) \rightarrow Y(j)}
    \Pi_{v : V(i)} (U(i, v)
    \rightarrow X(j, \beta(v)))
  \end{equation*}
  and the right maps in the family of lifting problems are given by
  evaluation.
\end{definition}

\begin{definition}
  Fix a map $Y \rightarrow J$. \emph{Step 1 of the small object
    argument at $Y$} is the pointed endofunctor
  $R_1 \colon \catc/Y \rightarrow \catc/Y$ defined as follows. Suppose
  that we are given $f \colon X \rightarrow Y$ in $\catc/Y$. We first
  form the universal lifting problem from $m$ to $f$ as in definition
  \ref{def:ulp}. We then define $R_1 f$ to be the unique map out of
  the pushout, with unit given by the pushout inclusion $\lambda_f$,
  as below.
  \begin{equation*}
    \xymatrix{ \sigma^\ast(U) \ar[rr] \ar[d] & & X \ar[dd]
      \ar[dl]|{\lambda_f} \\
      \sigma^\ast(V) \ar[r] \ar@/_/[drr]
      & K_1 f \pushoutcorner \ar[dr]|{R_1 f} & \\
      & & Y}
  \end{equation*}
\end{definition}

We recall the following from \cite[Theorem 7.5.2]{swanliftprob} (see
also \cite[Remark 7.5.6]{swanliftprob}, and \cite[Section
4.4]{swanliftprob} for the more general and precise definitions of
fibred and strongly fibred).
\begin{proposition}
  The pointed endofunctors are preserved by pullback along all maps
  $J' \rightarrow J$. We say $R_1$ is a \emph{fibred} lawfs.
\end{proposition}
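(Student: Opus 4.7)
The plan is to show directly that $R_1$ commutes with base change: given $h \colon J' \to J$ and $f \colon X \to Y$ in $\catc/J$ (with pullback $f' \colon h^\ast X \to h^\ast Y$ in $\catc/J'$), the pullback of the Step 1 construction at $f$ along the induced map $h^\ast Y \to Y$ agrees, compatibly with units, with Step 1 applied to $f'$.

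First I would analyse how the universal family of lifting problems transforms under pullback. The parameter object $K$ from Definition \ref{def:ulp} is constructed from $I$, $J$, $U$, $V$, $X$, $Y$ using only $\Sigma$, $\Pi$, and internal homs. Since $\catc$ is locally cartesian closed, the base change functor $h^\ast \colon \catc/J \to \catc/J'$ has both adjoints $\Sigma_h \dashv h^\ast \dashv \Pi_h$, and therefore preserves all limits and all colimits; by iterated Beck--Chevalley it commutes (up to the canonical isomorphisms) with the $\Sigma$, $\Pi$, and exponential operations used to build $K$. Hence the pullback of the whole diagram in Definition \ref{def:liftprob1} along $h$ is exactly the universal family of lifting problems from $m$ to $f'$, i.e.\ $h^\ast K \cong K'$ and the two outer pullback squares of the universal family pull back to the corresponding squares for $f'$.

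Next, observe that the pushout defining $K_1 f$ is taken in $\catc$ (more naturally in $\catc/Y$), and the induced base change $\catc/Y \to \catc/h^\ast Y$ again has a right adjoint and therefore preserves pushouts. Applying it to the defining pushout square of $K_1 f$ gives precisely the defining pushout square of $K_1 f'$, and the induced map to $h^\ast Y$ is $R_1 f'$. The unit $\lambda_{f'}$ is the pushout inclusion into $K_1 f'$, and pushout inclusions are preserved by pullback along the same adjoint argument, so the canonical comparison from $h^\ast(R_1 f)$ to $R_1 f'$ is an isomorphism of pointed objects, not merely of objects. Thus $R_1$ is fibred.

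The only real obstacle is bookkeeping: one has to verify Beck--Chevalley for the compound expression that defines $K$ (with its four nested dependent product/sum/exponential layers), and then check that all the identifications cohere with the universal maps appearing in the pushout, so that the resulting isomorphism is genuinely compatible with $\lambda$ and not merely an abstract isomorphism of underlying endofunctor values. Both tasks are standard consequences of local cartesian closure together with the universal property of pushouts, so the argument does not require any new ingredient beyond what was used to define $R_1$.
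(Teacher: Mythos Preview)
Your argument is correct. The paper itself does not give a proof of this proposition: it is stated as a recollection, with the proof deferred entirely to \cite[Theorem 7.5.2]{swanliftprob}. What you have written is essentially the direct verification that that citation stands in for, specialised to the codomain fibration. The two ingredients you isolate are exactly the right ones: (i) the parameter object $K$ of the universal lifting problem is built from $\Sigma$, $\Pi$, and exponentials, and Beck--Chevalley for these in a locally cartesian closed category gives $K' \cong K \times_J J'$ together with the identification of the pulled-back universal problem with the universal problem for $f'$; and (ii) the base-change functor $\catc/Y \to \catc/h^\ast Y$ preserves the defining pushout because it is a left adjoint. Your closing remark that the remaining work is coherence bookkeeping is accurate and honest: nothing beyond standard lccc calculus is needed, but one does have to chase the canonical isomorphisms through the pushout to see that the comparison respects the units $\lambda$.
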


\begin{definition}
  We say $R_1$ is \emph{strongly fibred} if it is preserved by
  pullback along all maps $Y' \rightarrow Y$.
\end{definition}

Given any $f \colon X \rightarrow Y$ in $\catc/Y$, we have a pointed
endofunctor, which we will denote $I_{X}$, defined by coproduct,
sending $X'$ to $X' + X$, with unit given by coproduct inclusion. We
clearly have the following proposition (by taking reductions and
arities both to be initial).
\begin{proposition}
  For any $X$, $I_X$ is pointed polynomial.
\end{proposition}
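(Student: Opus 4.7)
The plan is to exhibit $I_X$ explicitly as $P_{f,g,h,k}$ for a carefully chosen polynomial with reductions over the base $Y$, taking both the middle object and the reductions to be initial so that the machinery collapses to just adding a constant endofunctor to the identity.

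First I would set up the polynomial with reductions as follows: take the ambient base to be $Y$, let the right-hand object of the polynomial be $X$ itself with $g$ equal to the structure map $X \to Y$, let the middle object be the initial object $\emptyset$ with $f \colon \emptyset \to X$ and $h \colon \emptyset \to Y$ the unique maps, and let $R = \emptyset$ with $k = 1_\emptyset$. The coherence condition $g \circ f \circ k = h \circ k$ holds trivially since both sides are maps out of $\emptyset$.

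Next I would compute the underlying dependent polynomial endofunctor $\Sigma_g \Pi_f h^\ast$ on $\catc/Y$ and verify that it is constant at the object $X \to Y$. Because $\catc$ is locally cartesian closed, the initial object is strict, so $\catc/\emptyset$ is trivial; hence $h^\ast$ factors through this trivial category, and $\Pi_f$ sends the unique object of $\catc/\emptyset$ to the terminal object $1_X$ in $\catc/X$ (as a right adjoint, it preserves terminal objects, and the unique object of the trivial category is terminal). Applying $\Sigma_g$ then yields the object $X \to Y$ in $\catc/Y$, independently of the input. So $\Sigma_g \Pi_f h^\ast$ is indeed the constant functor at $X$.

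Finally, since $R = \emptyset$ is initial, the earlier proposition asserting that in this case $P_{f,g,h,k} \cong P_{f,g,h} + \operatorname{Id}_{\catc/Y}$ (with unit the right-hand coproduct inclusion) applies directly. Substituting the constant value computed above, we get $P_{f,g,h,k}(A) \cong X + A$ with the coproduct inclusion $A \to X + A$ as the point, which is exactly $I_X$ up to isomorphism of pointed endofunctors. I do not anticipate any real obstacle; the only point requiring a line of care is the strictness of the initial object in the lcc category $\catc$, which is what allows the computation of $\Pi_f$ on objects of $\catc/\emptyset$ to go through cleanly.
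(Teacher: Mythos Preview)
Your proposal is correct and follows exactly the approach indicated by the paper, which simply notes parenthetically that one takes ``reductions and arities both to be initial.'' You have spelled this out carefully: with the constructors object equal to $X$ (and $g$ the given map to $Y$), the arity object initial, and $R$ initial, the underlying polynomial functor is constant at $X$ and the earlier proposition on $R$ initial yields $P_{f,g,h,k} \cong \operatorname{Id}_{\catc/Y} + X$, which is $I_X$.
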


\begin{theorem}
  \label{thm:cofgenawfsfrominit}
  Suppose that for each map $Y \rightarrow J$ and every $f \colon X
  \rightarrow Y$ in $\catc/Y$ we are given a choice of initial algebra
  for the pointed endofunctor $I_X + R_1$. Then the awfs cofibrantly
  generated by $m$ exists, and is fibred.
\end{theorem}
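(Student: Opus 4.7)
The plan is to adapt Garner's small object argument so that it uses the hypothesised initial algebras in place of transfinite colimits. The conceptual point is a standard fact: for any pointed endofunctor $T$ on a category, the initial $(I_X + T)$-algebra is the free $T$-algebra on $X$, so the hypothesis of the theorem forces the forgetful functor from $R_1$-algebras to admit a left adjoint and hence $R_1$ to extend to a monad. This free monad on $R_1$ is what the transfinite SOA normally produces.

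Concretely, given $f \colon X \to Y$ in $\catc/Y$, let $Z_f$ denote the underlying object of the chosen initial $(I_X + R_1)$-algebra. Unpacking the coproduct, its algebra structure consists of (i) a map $\lambda_f \colon X \to Z_f$ over $Y$ from $I_X$ and (ii) an $R_1$-algebra structure on $\rho_f \colon Z_f \to Y$. Setting $L f := \lambda_f$ and $R f := \rho_f$ gives the candidate functorial factorisation. Functoriality is standard from initiality: given a morphism of arrows with bottom map $\sigma \colon Y \to Y'$, fibredness of $R_1$ makes $\sigma^\ast Z_{f'}$ an $R_1$-algebra in $\catc/Y$, and the top map $X \to X'$ composed with $X' \to Z_{f'}$ supplies the $I_X$ part, so $\sigma^\ast Z_{f'}$ becomes an $(I_X + R_1)$-algebra; initiality of $Z_f$ then delivers the required morphism of factorisations.

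The monad multiplication $\mu \colon R^2 \Rightarrow R$ arises from the observation that $Z_f$, regarded as the codomain of $R f$, carries a canonical $(I_{Z_f} + R_1)$-algebra structure (identity for the $I_{Z_f}$ part, the existing $R_1$-structure for the other). Initiality of $Z_{R f}$ yields the component $Z_{R f} \to Z_f$, and the monad axioms, the induced comonad structure on $L$, and the coherence that distinguishes an awfs all drop out from uniqueness clauses of universal properties. To see that the awfs is cofibrantly generated by $m$ in the sense of the author's previous paper, I would note that $R_1$-algebra structures, by the pushout definition of $R_1$, are exactly choices of solutions to the universal family of lifting problems from $m$; this characterisation passes through the free-monad construction to the analogous characterisation of $R$-algebras. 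Fibredness of the resulting awfs is inherited from fibredness of $R_1$ together with the fact that initial algebras for $I_X + R_1$ are stable under pullback along the base.

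The main obstacle is verifying all of the awfs coherence in the fibred setting at once. The cleanest way I would expect is to phrase the construction as a left adjoint to the forgetful functor from $R_1$-algebras to the underlying slice category, naturally in $Y$, and transfer the resulting monad structure to the arrow category. The delicate point is ensuring that this left adjoint exists in a sufficiently coherent way over varying $Y$ so that the resulting awfs is a structure on the entire codomain fibration rather than fibre by fibre; this is precisely where the fibredness of $R_1$ and the pullback stability of the initial algebras must be carefully deployed.
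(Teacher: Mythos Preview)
The paper does not actually prove this theorem here: its entire proof is the sentence ``See \cite[Corollary 5.4.7]{swanliftprob}.''  So there is no argument in the paper to compare against, only a deferral to the author's earlier work on fibred lifting problems.

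Your sketch is a reasonable reconstruction of what that cited result amounts to, and the core idea is correct: for a pointed endofunctor $T$, an initial $(I_X + T)$-algebra is precisely a free $T$-algebra on $X$, so the hypothesis yields an algebraically-free monad on $R_1$, and the factorisation $f = R f \circ L f$ is read off from the two components of the algebra structure.  The use of initiality to obtain functoriality, the monad multiplication, and the comonad on $L$ is the standard Garner/Kelly package, and you have identified the right mechanism for each.

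Two points worth tightening.  First, when you say fibredness ``is inherited from \ldots\ the fact that initial algebras for $I_X + R_1$ are stable under pullback along the base,'' note that the hypothesis only gives a \emph{choice} of initial algebra for each $f$, with no compatibility assumed; stability under pullback holds only up to canonical isomorphism, obtained by combining fibredness of $R_1$ with uniqueness of initial objects.  That canonical isomorphism is what the cited reference packages into the statement that the resulting awfs is fibred, and your sketch should make explicit that this is where uniqueness does the work.  Second, your closing paragraph is right that the genuine labour lies in assembling the fibrewise monads into a single awfs on the codomain fibration; this is precisely the content of the machinery in \cite[Section 5]{swanliftprob}, and there is no shortcut around it here---which is presumably why the present paper simply cites rather than reproves.
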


\begin{proof}
  See \cite[Corollary 5.4.7]{swanliftprob}.
  \todo{maintain ref to other paper}
\end{proof}

\begin{theorem}
  If $R_1$ is strongly fibred
  then so is the resulting cofibrantly generated rawfs, if it exists.
\end{theorem}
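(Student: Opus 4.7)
The plan is to show that for any map $h \colon Y' \to Y$ in $\catc$, the pullback functor $h^\ast \colon \catc/Y \to \catc/Y'$ carries the initial algebra for the pointed endofunctor $I_X + R_1$ on $\catc/Y$ to the initial algebra for the analogous pointed endofunctor $I_{h^\ast X} + R_1$ on $\catc/Y'$. By theorem \ref{thm:cofgenawfsfrominit} the right part of the cofibrantly generated factorisation of any $f \colon X \to Y$ is read off from this initial algebra, so this is exactly the preservation under pullback that strong fibredness of the rawfs demands.

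First I would establish a natural isomorphism of pointed endofunctors $h^\ast \circ (I_X + R_1) \cong (I_{h^\ast X} + R_1) \circ h^\ast$. The summand $h^\ast \circ I_X \cong I_{h^\ast X} \circ h^\ast$ follows because $h^\ast$ preserves finite coproducts, having a left adjoint $\Sigma_h$ and a right adjoint $\Pi_h$ in the locally cartesian closed category. The summand $h^\ast R_1 \cong R_1 h^\ast$ is the strong fibredness hypothesis. The coproduct of pointed endofunctors is a pushout along the units, so compatibility of both isomorphisms with those units is what guarantees that the intertwining respects the pointed structure.

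Second, I would transport initiality across this intertwining using the adjunction $h^\ast \dashv \Pi_h$. Taking the mate of the intertwining isomorphism yields $\Pi_h \circ (I_{h^\ast X} + R_1) \cong (I_X + R_1) \circ \Pi_h$, which lifts the adjunction $h^\ast \dashv \Pi_h$ to an adjunction between the two categories of algebras for the relevant pointed endofunctors. As a left adjoint at the algebra level, the lifted $h^\ast$ preserves initial objects, so if $B_f$ denotes the initial algebra used to factor $f$, then $h^\ast B_f$ is initial on $\catc/Y'$ for $I_{h^\ast X} + R_1$ and hence, assuming the rawfs exists, is canonically isomorphic to $B_{h^\ast f}$. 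The factorisation maps themselves are built from this algebra data by theorem \ref{thm:cofgenawfsfrominit}, so they too are preserved up to canonical isomorphism.

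The main obstacle is verifying that the intertwining really is one of pointed endofunctors, rather than just of underlying endofunctors, so that the mate calculus lifts cleanly to the algebra categories. For $I_X$ this is immediate from extensivity of $\catc$. For $R_1$ one must interpret strong fibredness as preservation of the full pointed endofunctor, including its unit $\lambda_f$, which is how the notion is framed in section \ref{sec:liftprobreview1}. Once this coherence is in place the remainder of the argument is purely formal mate calculus.
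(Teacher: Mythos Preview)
The paper does not prove this statement here at all; it simply cites \cite[Theorem 5.5.2]{swanliftprob}. Your proposal therefore supplies a direct argument in place of an external reference, and the overall strategy---show that $h^\ast$ intertwines the two relevant pointed endofunctors, lift the adjunction $h^\ast \dashv \Pi_h$ to the algebra categories, and conclude that the lifted $h^\ast$ preserves initial objects---is sound and is essentially what one would expect the cited result to do.

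There is one slip worth flagging. You write that the mate of the intertwining isomorphism yields an isomorphism $\Pi_h \circ (I_{h^\ast X} + R_1) \cong (I_X + R_1) \circ \Pi_h$. This is false in general: $\Pi_h$ is a right adjoint and need not preserve the coproducts hidden in $I_{h^\ast X}$ or in the pushout defining $R_1$, so the mate is only a natural transformation $(I_X + R_1)\,\Pi_h \Rightarrow \Pi_h\,(I_{h^\ast X} + R_1)$, not an isomorphism. Fortunately your argument does not actually need it to be invertible: a natural transformation in this direction is exactly what is required to lift $\Pi_h$ to a functor between the categories of (pointed-endofunctor) algebras, and the standard mate calculus then shows that the lifted $h^\ast$ and the lifted $\Pi_h$ remain adjoint. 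So the conclusion that the lifted $h^\ast$ is a left adjoint and hence preserves initial algebras still stands. You should simply weaken the intermediate claim from ``isomorphism'' to ``natural transformation'' and the proof goes through.
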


\begin{proof}
  See \cite[Theorem 5.5.2]{swanliftprob}.
\end{proof}

\subsection{Step 1 as a Pointed Polynomial Endofunctor}
\label{sec:step-1-as}

\begin{theorem}
  \label{thm:step1isptdpoly}
  $R_1$ is pointed polynomial.
\end{theorem}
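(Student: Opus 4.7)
The plan is to exhibit polynomial-with-reductions data whose associated pointed polynomial endofunctor is naturally isomorphic to $R_1$. Inspecting the pushout that defines $R_1 f$, one sees that $\sigma^\ast(V)$ and $\sigma^\ast(U)$ are themselves values of polynomial-like expressions in the input $X$: a fibre of $\sigma^\ast(V)$ over $y \in Y(j)$ is the set of tuples $(i, v, \beta, \phi)$ with $\beta \colon V(i) \to Y(j)$, $\beta(v) = y$ and $\phi \in \Pi_{v' \in V(i)} U(i, v') \to X(j, \beta(v'))$, while $\sigma^\ast(U)$ additionally records a $u \in U(i, v)$. This is exactly the shape one expects from $\Sigma_{g'} \Pi_{f'} (h')^\ast$ and $\Sigma_{h'} \Sigma_{k'} (k')^\ast (f')^\ast \Pi_{f'} (h')^\ast$ respectively.

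Guided by this, I would take, over the base $Y$, the polynomial data
\[
Y' \;:=\; \Sigma_{i \in I}\,\Sigma_{v \in V(i)}\,\Sigma_{j \in J}\, Y(j)^{V(i)},
\qquad
g'(i, v, j, \beta) := \beta(v),
\]
with arities $X'_{(i,v,j,\beta)} := \Sigma_{v' \in V(i)} U(i, v')$, with $f'$ the evident projection and reindexing $h'(i, v, j, \beta, v', u) := \beta(v')$. For the reductions, I would let $k' \colon R' \rightarrowtail X'$ be the subobject cut out by $v' = v$, i.e.\ the pullback of the map $X' \to V \times_I V$, $(i, v, j, \beta, v', u) \mapsto (v, v')$, along the diagonal $\Delta \colon V \to V \times_I V$. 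Coherence is then automatic: on $R'$ we have $v = v'$, so $g'f'k' = \beta(v) = \beta(v') = h'k'$.

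With this data in hand, a direct computation in the internal language, unfolding $\Sigma$s, $\Pi$s and pullbacks, yields canonical isomorphisms $\sigma^\ast(V) \cong \Sigma_{g'} \Pi_{f'} (h')^\ast(X)$ and $\sigma^\ast(U) \cong \Sigma_{h'} \Sigma_{k'} (k')^\ast (f')^\ast \Pi_{f'} (h')^\ast(X)$, natural in $X$. Under these, the map $\sigma^\ast(U) \to X$ matches the evaluation-and-counit composite that defines one leg of $P_{f',g',h',k'}$, both sending a tuple $(i, v, j, \beta, v, u, \alpha)$ to $\alpha(v, u)$; and the map $\sigma^\ast(U) \to \sigma^\ast(V)$ matches the $\Sigma_{f'} \Sigma_{k'}$-counit composite defining the other leg, both sending the same tuple to $\bigl((i, v, j, \beta), \alpha\bigr)$. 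Hence the two pushouts defining $R_1 f$ and $P_{f',g',h',k'}(X)$ agree, with matching units $\lambda_f$. The main obstacle is purely bookkeeping: tracking which of the variables $(i, j, v, v', \beta, u)$ live in which fibre and verifying that the two legs agree after the standard adjoint-counit reductions. The conceptual content is that the reductions should identify the new constructor $(i, v, j, \beta)$ with the arity entry at $v' = v$ for each $u \in U(i, v)$, and the ``diagonal on $V$'' prescription above is exactly what forces coherence.
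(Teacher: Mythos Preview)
Your proposal is correct and follows essentially the same approach as the paper: both unfold $\sigma^\ast(U)$ and $\sigma^\ast(V)$ type-theoretically and read off the polynomial-with-reductions data, arriving at the same constructors $(i,v_0,\beta)$, arity $\Sigma_{v:V(i)} U(i,v)$, reindexing $\beta(v)$, and reductions at $(v_0,u)$ for $u\in U(i,v_0)$ (your diagonal condition $v'=v$). The paper's proof is terser---it just writes the two formulas and declares the match clear---whereas you supply the data upfront and verify coherence and the two pushout legs explicitly, but the content is the same.
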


\begin{proof}
  Unfolding the type theoretic definition of universal lifting
  problem, we get the following descriptions of $\sigma^\ast(U)$ and
  $\sigma^\ast(V)$.
  \begin{align*}
    \sigma^\ast(U) &\cong \Sigma_{j : J} \Sigma_{i : I} \Sigma_{v_0 :
                     V(i)} \Sigma_{\beta: V(i) \rightarrow Y(j)}
                     \Sigma_{u : U(i, v_0)}
                     \Pi_{z : \Sigma_{v : V(i)} U(i, v)}X(j,
                     \beta(p_0(z))) \\
    \sigma^\ast(V) &\cong \Sigma_{j : J} \Sigma_{i : I} \Sigma_{v_0 :
                     V(i)} \Sigma_{\beta: V(i) \rightarrow Y(j)}
                     \Pi_{z : \Sigma_{v : V(i)} U(i, v)}X(j,
                     \beta(p_0(z)))
  \end{align*}
  However, like this it is clear that the definition matches the
  definition of pointed polynomial endofunctor.
\end{proof}

It is easiest to understand the definition of the polynomial with
reductions for $R_1$ when we phrase it in terms of constructors,
arities, reindexing and reductions. We read these off from the
description above.

The overall context we are working in is the object $Y$, which in type
theoretic notation is $\Sigma_{j : J} Y(j)$ (since we are thinking of
$Y$ as a family of types indexed by $J$).

A constructor over $(j, y)$ for $j : J$ and $y : Y(j)$ consists of
$i : I$, $v_0 : V(i)$ and a map $\beta \colon V(i) \rightarrow Y(j)$
such that $\beta(v_0) = y$.

The arity of the constructor $(i, v_0, \beta)$ is $\Sigma_{v: V(i)}
U(i, v)$.

The reindexing map sends $(i, v_0, \beta, (v, u))$ to $\beta(v)$.

Finally, the reduction equations say that given
$\alpha : \Pi_{\Sigma_{v : V(i)} U(i, v)} X(j, \beta(j, p_0(z)))$ and
$u : U(v_0)$, $\sup(i, v_0, \beta, \alpha)$ reduces to $\alpha(v_0,
u_0)$ (where $\sup(i, v_0, \beta, \alpha)$ is given by some
$R_1$-algebra structure).

We can think of the corresponding $W$-type with reductions directly in
terms of lifting problems as follows. Suppose we are given a
constructor $(i, v_0, \beta)$ and a map
$\alpha : \Pi_{\Sigma_{v : V(i)} U(i, v)} X(j, \beta(j,
p_0(z)))$. Then, firstly $\beta$ and $\alpha$ together form a lifting
problem of $m_i$ against $f_j$. We think of
$\sup(i, v_0, \beta, \alpha)$ as a diagonal filler of the lifting
problem, evaluated at $v_0$. The reduction equations then ensure that
the upper triangle of the diagonal filler commutes. Therefore, we
think of an initial algebra of $R_1$ as the result of freely adding a
filler for every lifting problem, subject to ensuring that the upper
triangles do always commute.

An initial algebra for $R_1 + I_X$ is similar. Once again, we are
freely adding a filler for every lifting problem. However in this case
we start off with a copy of $X$ before adding all the fillers.

Finally, we will later need the lemma below.
\begin{lemma}
  \label{lem:step1ispb}
  For each $Y \rightarrow J$, $R_1$ at $Y$ is generated by the polynomial
  with reductions of the form below, where the map $A \rightarrow C$
  is a pullback of the map $U \rightarrow I$.
  \begin{equation*}
    \begin{gathered}
      \xymatrix { R \ar[r] & A \ar[r] \ar[dl] & C \ar[dr] \\
        Y & & & Y}
    \end{gathered}
  \end{equation*}
\end{lemma}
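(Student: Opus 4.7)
The plan is to read the polynomial with reductions directly off the type-theoretic description of $R_1$ given immediately after Theorem~\ref{thm:step1isptdpoly}, then make the pullback shape explicit. Concretely, I would take the context to be $Y$ (viewed as a family over $J$); the object of constructors to be
\begin{equation*}
  C := \Sigma_{j : J} \Sigma_{y : Y(j)} \Sigma_{i : I} \Sigma_{v_0 : V(i)} \{ \beta : V(i) \rightarrow Y(j) \mid \beta(v_0) = y \},
\end{equation*}
with $C \rightarrow Y$ projecting out $(j, y)$; the arity to be $A := \Sigma_{c \in C} \Sigma_{v : V(i_c)} U(i_c, v)$, with reindexing $A \rightarrow Y$ sending $(c, v, u)$ to $(j, \beta(v))$; and the reductions to be $R := \Sigma_{c \in C} U(i_c, v_0^c)$, with $R \rightarrow A$ sending $(c, u)$ to $(c, v_0^c, u)$.

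The pullback observation is that there is a canonical map $C \rightarrow I$ sending $(j, y, i, v_0, \beta)$ to $i$, and $A$ is by construction the pullback of $U \rightarrow I$ along this map: an element of $A$ is precisely a constructor together with a point of the total space of $U \rightarrow I$ lying over its $i$-component. The coherence condition is then immediate, since $R \rightarrow A \rightarrow Y$ sends $(c, u)$ to $(j, \beta(v_0)) = (j, y)$ by the defining equation in $C$, and this agrees with $R \rightarrow A \rightarrow C \rightarrow Y$.

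To verify that this polynomial with reductions really does generate $R_1$ at $Y$, I would compute $\Sigma_g \Pi_f h^\ast(X)$ and $\Sigma_h \Sigma_k k^\ast f^\ast \Pi_f h^\ast(X)$ for the data above and match them against the two iterated $\Sigma$ expressions for $\sigma^\ast(V)$ and $\sigma^\ast(U)$ obtained inside the proof of Theorem~\ref{thm:step1isptdpoly}; the two canonical maps out of $\Sigma_h \Sigma_k k^\ast f^\ast \Pi_f h^\ast(X)$ whose pushout defines the pointed polynomial endofunctor then correspond exactly to the two maps $\sigma^\ast(U) \rightarrow \sigma^\ast(V)$ and $\sigma^\ast(U) \rightarrow X$ whose pushout defines $R_1 X$. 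I do not expect a genuine obstacle here: the lemma is essentially a repackaging of data already displayed in the proof of Theorem~\ref{thm:step1isptdpoly}, and the only new content is the pullback shape of $A \rightarrow C$ over $U \rightarrow I$, which is visible in the formula $A = \Sigma_{c \in C} \Sigma_{v : V(i_c)} U(i_c, v)$.
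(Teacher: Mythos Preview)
Your proposal is correct and follows essentially the same approach as the paper: read off the type-theoretic description of $C$ and $A$ from the constructor/arity data preceding the lemma, and observe that $A = \Sigma_{c \in C}\Sigma_{v:V(i_c)}U(i_c,v)$ is by construction the pullback of $U \rightarrow I$ along the projection $C \rightarrow I$. The paper's proof is slightly terser---it writes $C$ without the redundant $y$-component (using $\beta(v_0)$ for the map to $Y$) and explicitly declines to spell out the other maps or the matching with $R_1$, noting in a footnote that this is ``somewhat messy''---so your additional verification of coherence and the pushout comparison is extra detail rather than a different route.
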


\begin{proof}
  We can read off an description of the map $A \rightarrow C$ from the
  arguments above.\footnote{The same is true for the other maps, but we don't
  need them here, and it is somewhat messy.}

  In type theoretic notation, $A$ and $C$ are defined as below, with
  the map $A \rightarrow C$ given by projection.
  \begin{align*}
    C &:= \Sigma_{j : J} \Sigma_{i : I} \Sigma_{v_0 :
                     V(i)} \Sigma_{\beta: V(i) \rightarrow Y(j)} \\
    A &:= \Sigma_{j : J} \Sigma_{i : I} \Sigma_{v_0 :
                     V(i)} \Sigma_{\beta: V(i) \rightarrow Y(j)}
                     \Sigma_{v : V(j)} U(i, v)
  \end{align*}

  However, in this form it is clear that the map $A \rightarrow C$ is
  just the pullback of the map $U \rightarrow I$ along the projection
  $C \rightarrow I$.
\end{proof}

We can now deduce the following.
\begin{theorem}
  \label{thm:twocovpretoposcofibgen}
  Suppose we are given a family of maps of the following form over the
  codomain functor on a $\Pi W$-pretopos
  \begin{equation*}
    \xymatrix{ U \ar[dr] \ar[rr]^m & & V \ar[dl] \\
      & I & }
  \end{equation*}
  Furthermore suppose we are given a 2-cover base of the map
  $U \rightarrow I$. Then $m$ cofibrantly generates an awfs.
\end{theorem}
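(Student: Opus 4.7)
The plan is to reduce to Theorem \ref{thm:cofgenawfsfrominit}, so it suffices to produce, for each $Y \to J$ and each $f \colon X \to Y$ in $\catc/Y$, an initial algebra for the pointed endofunctor $I_X + R_1$. To build those initial algebras, I intend to exhibit $I_X + R_1$ as a pointed polynomial endofunctor whose underlying polynomial admits a $2$-cover base, and then quote part (1) of Theorem \ref{thm:rwinitialalg}.

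First I will present each summand as a pointed polynomial endofunctor in a convenient form. For $R_1$ this is already done: Theorem \ref{thm:step1isptdpoly} exhibits $R_1$ as pointed polynomial, and Lemma \ref{lem:step1ispb} gives a presentation in which the middle map $A \to C$ of the underlying polynomial is a pullback of $U \to I$. For $I_X$ I will take the polynomial with reductions in which the reductions and arities are both initial: namely $R = 0$, $f \colon 0 \to X$, $g = f \colon X \to Y$, and reindexing $h \colon 0 \to Y$. A direct calculation shows $\Sigma_g \Pi_f h^\ast(-) \cong X$ (the constant functor) and that the reduction object is initial, so the pushout defining the pointed endofunctor collapses to $\operatorname{Id}_{\catc/Y} + X$, which is exactly $I_X$.

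Next I check $2$-cover bases for both underlying polynomials and combine them. Since $U \to I$ admits a $2$-cover base by hypothesis, Lemma \ref{lem:twocovpb} hands it on to the pullback $A \to C$, so the polynomial for $R_1$ is covered. The polynomial for $I_X$ has arity map $0 \to X$, and maps out of the initial object admit $2$-cover bases trivially. Proposition \ref{prop:ppercoprod} then presents $I_X + R_1$ (coproduct in the category of pointed endofunctors) as a pointed polynomial endofunctor whose underlying polynomial is the coproduct of the two polynomials exhibited above; Lemma \ref{lem:twocovcoprod} shows the coproduct of the two $2$-cover bases is again a $2$-cover base for the middle map. Applying Theorem \ref{thm:rwinitialalg}(1) supplies the required initial algebra, and Theorem \ref{thm:cofgenawfsfrominit} then produces the cofibrantly generated awfs.

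I expect the only real obstacle to be bookkeeping rather than mathematics: I need to check that the coproduct of pointed polynomial endofunctors produced by Proposition \ref{prop:ppercoprod} really matches the coproduct of pointed endofunctors demanded by Theorem \ref{thm:cofgenawfsfrominit}, and that the presentation I chose for $I_X$ genuinely yields the functor $X' \mapsto X' + X$ with its coproduct-inclusion unit. Once these identifications are in place, the argument is a short assembly of Theorem \ref{thm:step1isptdpoly}, Lemma \ref{lem:step1ispb}, Lemmas \ref{lem:twocovpb} and \ref{lem:twocovcoprod}, Proposition \ref{prop:ppercoprod}, Theorem \ref{thm:rwinitialalg}, and Theorem \ref{thm:cofgenawfsfrominit}, with no further analytic content.
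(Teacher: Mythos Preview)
Your proposal is correct and follows essentially the same route as the paper's own proof: reduce to Theorem~\ref{thm:cofgenawfsfrominit}, present $R_1$ and $I_X$ as pointed polynomial endofunctors (via Theorem~\ref{thm:step1isptdpoly} and the trivial polynomial respectively), obtain $2$-cover bases for each (via Lemma~\ref{lem:step1ispb} and Lemma~\ref{lem:twocovpb} for $R_1$, trivially for $I_X$), combine them with Lemma~\ref{lem:twocovcoprod}, and invoke Theorem~\ref{thm:rwinitialalg}(1). The paper leaves the use of Proposition~\ref{prop:ppercoprod} implicit where you make it explicit, but otherwise the arguments coincide.
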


\begin{proof}
  We have shown in theorem \ref{thm:step1isptdpoly} that $R_1$ is
  pointed polynomial. Hence for each $f \colon X \rightarrow Y$, the
  pointed endofunctor $R_1 + I_X$ from theorem
  \ref{thm:cofgenawfsfrominit} is also pointed polynomial.

  $I_X$ trivially has a $2$-cover base. $R_1$ has a $2$-cover base since
  by lemma \ref{lem:step1ispb} it is a pullback of the map
  $U \rightarrow I$ for which we are given a $2$-cover base and so we
  can apply lemma \ref{lem:twocovpb}.
  
  Hence we can construct a $2$-cover base for each $R_1 + I_X$ by
  lemma \ref{lem:twocovcoprod}.
  
  But then by theorem \ref{thm:rwinitialalg} we can find initial
  algebras, so we can deduce by theorem \ref{thm:cofgenawfsfrominit}
  that the cofibrantly generated awfs on $m$ exists.
\end{proof}

\begin{corollary}
  \label{cor:pwtoposcofibgen}
  Suppose we are given a family of maps of the following form over the
  codomain functor on a $\Pi W$-pretopos satisfying $\wisc$
  \begin{equation*}
    \xymatrix{ U \ar[dr] \ar[rr]^m & & V \ar[dl] \\
      & I & }
  \end{equation*}
  Then the awfs cofibrantly generated by the family of maps exists.
\end{corollary}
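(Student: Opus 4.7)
The plan is simply to reduce this corollary to Theorem \ref{thm:twocovpretoposcofibgen} by showing that, under $\wisc$, every map admits a $2$-cover base. Once this is in hand, the corollary follows immediately by applying the theorem to the family $m$.

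First, I would recall that a $2$-cover base for a map $u \colon U \to I$ consists of a covering collection square over $u$, and then a second covering collection square over the induced diagonal-type map $\langle g, g\rangle \colon A \times_X A \to J \times_Y J$ of the first square. Given $\wisc$, the axiom directly gives us a covering collection square for $u \colon U \to I$, which yields the first square of the $2$-cover base. Then, applying $\wisc$ a second time to the canonical map $A \times_X A \to J \times_Y J$ built from the first square gives the second square. Hence every map in a $\Pi W$-pretopos satisfying $\wisc$ admits a $2$-cover base.

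In particular, applied to the given map $U \to I$ from the family $m \colon U \to V$ in $\catc/I$, this produces a $2$-cover base of $U \to I$. Theorem \ref{thm:twocovpretoposcofibgen} then applies verbatim: the argument there uses the $2$-cover base of $U \to I$ only to build, via Lemmas \ref{lem:twocovpb} and \ref{lem:twocovcoprod} together with Lemma \ref{lem:step1ispb}, a $2$-cover base for each pointed polynomial endofunctor of the form $R_1 + I_X$, and then invokes Theorem \ref{thm:rwinitialalg} and Theorem \ref{thm:cofgenawfsfrominit} to conclude that the cofibrantly generated awfs exists.

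There is essentially no obstacle beyond observing that $\wisc$ is preserved in the iterated form required: the only mild subtlety is that the second application of $\wisc$ is to a derived map depending on the first cover base chosen, but since $\wisc$ is a statement about all maps uniformly this is unproblematic. Thus the proof is a short deduction from Theorem \ref{thm:twocovpretoposcofibgen} and the two applications of $\wisc$.
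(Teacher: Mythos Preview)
Your proposal is correct and takes essentially the same approach as the paper: apply $\wisc$ twice to obtain a $2$-cover base for the map $U \to I$, then invoke Theorem~\ref{thm:twocovpretoposcofibgen}. The paper's proof is in fact just these two sentences, with the observation that $\wisc$ applied twice yields a $2$-cover base already recorded in Section~\ref{sec:2-coverbases}.
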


\begin{proof}
  By $\wisc$, the map $U \rightarrow I$ has a 2-cover base. Hence we can
  apply theorem \ref{thm:twocovpretoposcofibgen}.
\end{proof}

\begin{remark}
  One might expect that corollary \ref{cor:pwtoposcofibgen} can be
  proved directly without going via theorem
  \ref{thm:twocovpretoposcofibgen}, by using $\wisc$ directly to find
  each $2$-cover base. However, this doesn't work because we need to
  have a choice of $2$-cover bases for every vertical map
  $X \rightarrow Y \rightarrow J$, and $\wisc$ only tells us at least
  one such $2$-cover base exists. When we use theorem
  \ref{thm:twocovpretoposcofibgen} this does not matter because we
  only have to apply $\wisc$ once (or rather, twice), to get a
  $2$-cover base for the map $U
  \rightarrow I$, and from that we can define all the other
  $2$-cover bases that we need.
\end{remark}

We can also apply the simplified construction from section
\ref{sec:simpl-categ-presh} to get the following theorem.

\begin{theorem}
  \label{thm:simplecofgen}
  Let $\catc$ be a finitely cocomplete locally cartesian
  closed category with disjoint coproducts.
  Let $\smcat{A}$ be an internal category in $\catc$,
  and $\catc^\smcat{A}$ the category of diagrams of shape $\smcat{A}$.
  Suppose we are given a family of maps of the
  following form over the codomain functor on $\catc^\smcat{A}$.
  \begin{equation*}
    \xymatrix{ U \ar[dr] \ar[rr]^m & & V \ar[dl] \\
      & I & }
  \end{equation*}
  Suppose further that the map $U \rightarrow V$ is locally decidable.

  Then the awfs cofibrantly generated by the diagram exists.
\end{theorem}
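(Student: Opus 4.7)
The plan is to apply Theorem \ref{thm:cofgenawfsfrominit} inside the category of internal diagrams $\catc^\smcat{A}$. Thus for each $f \colon X \to Y$ in $\catc^\smcat{A}/Y$ with $Y \to I$, I need to exhibit an initial algebra for the pointed endofunctor $R_1 + I_X$ on $\catc^\smcat{A}/Y$. By Theorem \ref{thm:step1isptdpoly}, $R_1$ is a pointed polynomial endofunctor, and $I_X$ is trivially pointed polynomial (its underlying polynomial with reductions has both $R$ and $Y$ initial). By Proposition \ref{prop:ppercoprod}, the coproduct $R_1 + I_X$ is again pointed polynomial. So the task reduces to showing this coproduct is locally decidable, at which point Theorem \ref{thm:initialalgps} supplies the initial algebra and Theorem \ref{thm:cofgenawfsfrominit} finishes the argument.

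The key step is to show that $R_1$ itself is locally decidable. By Lemma \ref{lem:step1ispb}, $R_1$ is generated by a polynomial with reductions in which the arity-to-constructor map $f \colon A \to C$ is obtained as a pullback of $U \to I$ along the projection $C \to I$ sending $(j, i, v_0, \beta) \mapsto i$. Unfolding the description of the reductions given immediately after Theorem \ref{thm:step1isptdpoly}, a reduction position over the constructor $(j, i, v_0, \beta)$ is a choice of $u \in U(i, v_0)$, and the inclusion $k \colon R \to A$ sends $(j, i, v_0, \beta, u)$ to the arity position $(j, i, v_0, \beta, v_0, u)$. Hence the composite $f \circ k \colon R \to C$ has fibre $U(i, v_0)$ over each constructor, so under the canonical identification it is (a pullback of) the map $m \colon U \to V$ regarded in $\catc/I$. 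By hypothesis $m$ is locally decidable, meaning that after forgetting the $\smcat{A}$-action its externalization in $\catc/\smcat{A}_0$ is a monomorphism with decidable image. Pullbacks preserve this property (it is equivalent by Proposition \ref{prop:decpolyequivs} to being a coproduct inclusion, and coproducts are stable under pullback by extensivity of $\catc$), so $f \circ k$ is locally decidable, and hence so is the whole polynomial with reductions generating $R_1$.

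For the coproduct, I observe that $I_X$ is generated by a polynomial with reductions in which both $R$ and the arity are initial, so its $f \circ k$ is the identity on the initial object, vacuously locally decidable. Following the explicit description in Proposition \ref{prop:ppercoprod}, the coproduct of two pointed polynomial endofunctors is generated by the coproduct of the underlying diagrams, so its $f \circ k$ map is the coproduct $(f_1 \circ k_1) + (f_2 \circ k_2)$. Using extensivity again, the coproduct of two maps each isomorphic to a coproduct inclusion is itself isomorphic to a coproduct inclusion, hence locally decidable. Therefore $R_1 + I_X$ is a locally decidable pointed polynomial endofunctor.

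I then invoke Theorem \ref{thm:initialalgps} to obtain an initial algebra for $R_1 + I_X$ for each $f \colon X \to Y$ over $I$, and Theorem \ref{thm:cofgenawfsfrominit} to conclude that the awfs cofibrantly generated by $m$ exists. The main obstacle is the careful identification of $f \circ k$ for $R_1$ with a pullback of $m$, since only after this identification does the local decidability hypothesis on $m$ translate into the form required to apply Theorem \ref{thm:initialalgps}; the verifications about $I_X$ and closure of local decidability under coproduct are essentially bookkeeping using the extensivity of $\catc$.
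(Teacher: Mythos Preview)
Your proof is correct and follows essentially the same route as the paper: show that $R_1 + I_X$ is a locally decidable pointed polynomial endofunctor by using closure of local decidability under pullback and coproduct, then apply Theorems \ref{thm:initialalgps} and \ref{thm:cofgenawfsfrominit}. Your identification of $f \circ k$ for $R_1$ as a pullback of $m \colon U \to V$ is exactly the point the paper leaves implicit when it says ``by a similar argument to the one in the proof of theorem \ref{thm:twocovpretoposcofibgen}.'' One small slip: for $I_X$ it is the reductions and the \emph{arities} that are initial, not the constructor object (which is $X$); but this does not affect the argument.
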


\begin{proof}
  Note that locally decidable maps are closed under pullback and
  coproduct. Hence, by a similar argument to the one in the proof of
  theorem \ref{thm:twocovpretoposcofibgen}, we see that $R_1 + I_X$ is
  a locally decidable point polynomial endofunctor. We can then deduce
  the result by theorems \ref{thm:cofgenawfsfrominit} and
  \ref{thm:initialalgps}.
\end{proof}

\subsection{Lifting Problems for Squares}
\label{sec:lift-probl-squar}

Recall that in \cite[Section 8]{swanliftprob} \todo{maintain citations
  to other paper}
the author showed
that Sattler's notion of lifting problem for squares (from
\cite{sattlermodelstructures}) can be generalised to work over a
fibration. We apply this to the codomain fibration on $\catc$ to get
the following.

Suppose that we are given a diagram of the following from.
\begin{equation}
  \label{eq:codsquare}
  \begin{gathered}
    \xymatrix{ U_0 \ar[rr] \ar[d]_{m_0} & & U_1 \ar[d]^{m_1} \\
      V_0 \ar[rr] \ar[dr] & & V_1 \ar[dl] \\
      & I &
    }    
  \end{gathered}
\end{equation}

\begin{definition}
  Let $f \colon X \rightarrow Y$ be a morphism in $\catc/J$ for some
  $J \in \catc$. We say a
  \emph{family of lifting problems} from \eqref{eq:codsquare} to $f$
  is a family of lifting problems (in the sense of definition
  \ref{def:liftprob1}) from $m_1$ to $f$.
\end{definition}

Note that pasting the family of lifting problems to the pullback of
\eqref{eq:codsquare} gives a commutative diagram of the following form.
\begin{equation}
  \label{eq:lpsqdiag}
  \begin{gathered}
    \xymatrix{ \sigma^\ast(U_0) \ar[d] \ar[r] & \sigma^\ast(U_1)
      \ar[r] \ar[d]
       & X \ar[d] \\
      \sigma^\ast(V_0) \ar[r] \ar[dr] & \sigma^\ast(V_1) \ar[r] \ar[d]
       & Y \ar[d] \\
       & K \ar[r] & J
    }
  \end{gathered}
\end{equation}

\begin{definition}
  A \emph{solution} to the family of lifting problems is a map
  $\sigma^\ast(V_0) \rightarrow X$ making the upper rectangle in
  \eqref{eq:lpsqdiag} into two commutative triangles.
\end{definition}

\begin{definition}
  The \emph{universal family of lifting problems} from
  \eqref{eq:codsquare} to $f \colon X \rightarrow Y$ is the universal
  family of lifting problems from $m_1$ to $f$.
\end{definition}

Recall from section \ref{sec:liftprobreview1} that the universal
lifting problem is defined type theoretically by taking $K$ to be the
following type, with the right maps given by evaluation.
\begin{equation*}
  \Sigma_{i : I} \Sigma_{j : J} \Sigma_{\beta : V_1(i) \rightarrow Y(j)}
  \Pi_{v : V_1(i)} (U_1(i, v)
  \rightarrow X(j, \beta(v)))
\end{equation*}

We use this to construct a pointed endofunctor over $\cod$.
\begin{definition}
  Fix a square over an object $I$ as in \eqref{eq:codsquare}. We
  define a pointed endofunctor $R_1$ over $\cod$ called \emph{step one
    of the small object argument} as follows. Given
  $f \colon X \rightarrow Y$ we define $R_1 f$ to be the map below
  given by the universal property of the pushout, where we take
  $\sigma \colon K \rightarrow I$ to be as in the universal lifting
  problem from the square to $f$. The unit at $f$, is given by the
  inclusion $\lambda_f$ into the pushout.
  \begin{equation*}
    \xymatrix{ \sigma^\ast(U_0) \ar[d] \ar[r] & \sigma^\ast(U_1) \ar[rr] & &
      X \ar[dd] \ar[dl]|{\lambda_f} \\
      \sigma^\ast(V_0) \ar[rr] \ar@/_/[drrr] & \ar[r] 
      & K_1 f \pushoutcorner \ar[dr]|{R_1 f} & \\
      & & & Y}
  \end{equation*}
\end{definition}

\begin{lemma}
  \label{lem:step1ispolysq}
  For any family of squares as in \eqref{eq:codsquare}, step one of
  the small object argument is a pointed polynomial endofunctor.
\end{lemma}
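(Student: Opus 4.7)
The plan is to mirror the proof of Theorem~\ref{thm:step1isptdpoly}, in which the same statement was established for lifting problems against a single map rather than a square. The only structural change here is that the pushout defining $R_1 f$ replaces the pair $\sigma^\ast(U) \to \sigma^\ast(V)$ with $\sigma^\ast(U_0) \to \sigma^\ast(V_0)$, while the top horizontal map into $X$ factors as $\sigma^\ast(U_0) \to \sigma^\ast(U_1) \to X$. My strategy is therefore to unfold $\sigma^\ast(U_0)$ and $\sigma^\ast(V_0)$ type-theoretically and identify them as the two lower corners $\Sigma_h \Sigma_k k^\ast f^\ast \Pi_f h^\ast$ and $\Sigma_g \Pi_f h^\ast$ of the pushout \eqref{eq:30} arising from a suitable polynomial with reductions.

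Concretely, writing $u_{01} \colon U_0 \to U_1$ and $v_{01} \colon V_0 \to V_1$ for the horizontal maps of the square \eqref{eq:codsquare}, one computes from Definition~\ref{def:ulp} that
\begin{align*}
  \sigma^\ast(V_0) &\cong \Sigma_{j:J}\Sigma_{i:I}\Sigma_{v_0 : V_0(i)}\Sigma_{\beta : V_1(i) \to Y(j)}\Pi_{z : \Sigma_{v:V_1(i)} U_1(i,v)} X(j, \beta(p_0(z))), \\
  \sigma^\ast(U_0) &\cong \Sigma_{j:J}\Sigma_{i:I}\Sigma_{v_0 : V_0(i)}\Sigma_{u_0 : U_0(i,v_0)}\Sigma_{\beta : V_1(i) \to Y(j)}\Pi_{z : \Sigma_{v:V_1(i)} U_1(i,v)} X(j, \beta(p_0(z))).
\end{align*}
From this I would read off the polynomial with reductions over the context $Y = \Sigma_{j:J}Y(j)$: a constructor at $(j,y)$ is a triple $(i, v_0, \beta)$ with $i \in I$, $v_0 \in V_0(i)$ and $\beta \colon V_1(i) \to Y(j)$ such that $\beta(v_{01}(v_0)) = y$; its arity is $\Sigma_{v:V_1(i)} U_1(i,v)$, with reindexing map $(v,u) \mapsto (j,\beta(v))$; and the reductions over $(i,v_0,\beta)$ are indexed by $u_0 \in U_0(i,v_0)$, with the reduction map $k$ sending such a $u_0$ to $(v_{01}(v_0), u_{01}(u_0))$ in the arity.

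The coherence condition $g \circ f \circ k = h \circ k$ is then immediate: reindexing applied to $(v_{01}(v_0), u_{01}(u_0))$ yields $(j, \beta(v_{01}(v_0))) = (j, y)$, which is exactly the base of the constructor. Given this identification, the pushout \eqref{eq:30} built from this polynomial with reductions agrees, up to canonical isomorphism in $\catc/Y$, with the pushout defining $R_1 f$, which shows $R_1$ is pointed polynomial.

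I do not expect a genuine obstacle here; the work is in bookkeeping the type-theoretic expressions and in verifying that the composite $\sigma^\ast(U_0) \to \sigma^\ast(U_1) \to X$ really corresponds to evaluating $\alpha$ at $(v_{01}(v_0), u_{01}(u_0))$, which is exactly what the description of $k$ encodes. A useful sanity check is that when the square is degenerate (with $m_0 = m_1$ and identity horizontals) this identification reduces to the one in Theorem~\ref{thm:step1isptdpoly}.
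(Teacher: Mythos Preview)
Your proposal is correct and follows exactly the approach the paper takes: the paper's proof is a single line, ``By unfolding the type theoretic definition, similarly to as in theorem~\ref{thm:step1isptdpoly},'' and you have carried out precisely that unfolding, supplying the explicit descriptions of $\sigma^\ast(U_0)$ and $\sigma^\ast(V_0)$ and reading off the constructors, arities, reindexing, and reductions. Your coherence check and degenerate-square sanity check are sound, so there is nothing to add.
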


\begin{proof}
  By unfolding the type theoretic definition, similarly to as in
  theorem \ref{thm:step1isptdpoly}.
\end{proof}

\begin{theorem}
  Suppose that $\catc$ is a locally cartesian closed category and we
  are given a family of squares as in \eqref{eq:codsquare}. Suppose
  further that one of the following two conditions holds.
  \begin{enumerate}
  \item $\catc$ is a $\Pi W$-pretopos that satisfies $\wisc$.
  \item $\catc$ is a category of internal presheaves over a finitely
    cocomplete
    locally cartesian closed category with disjoint coproducts, and
    the map $U_0 \rightarrow V_0$ is a locally decidable
    monomorphism.
  \end{enumerate}
  Then the rawfs cofibrantly generated by \eqref{eq:codsquare}
  exists.

  Furthermore, if the map $V_1 \rightarrow I$ is an isomorphism then
  the resulting rawfs is strongly fibred.
\end{theorem}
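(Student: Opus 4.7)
The plan is to reduce the existence of the cofibrantly generated rawfs in both cases to the existence of initial algebras for a certain pointed polynomial endofunctor, and then invoke either Theorem \ref{thm:rwinitialalg} or Theorem \ref{thm:initialalgps} accordingly. By (the square-analog of) Theorem \ref{thm:cofgenawfsfrominit} from \cite{swanliftprob}, for each $Y \to J$ and each $f \colon X \to Y$ in $\catc/Y$ it suffices to construct an initial algebra for the pointed endofunctor $R_1 + I_X$, where $R_1$ is step 1 of the small object argument for the square. By Lemma \ref{lem:step1ispolysq}, $R_1$ is pointed polynomial; $I_X$ is trivially pointed polynomial; and by Proposition \ref{prop:ppercoprod} the coproduct $R_1 + I_X$ is pointed polynomial. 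This mirrors the structure of the proofs of Theorem \ref{thm:twocovpretoposcofibgen}, Corollary \ref{cor:pwtoposcofibgen}, and Theorem \ref{thm:simplecofgen}.

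For case (1), I would first unfold the type-theoretic description of $R_1$ for squares, just as in the proof of Lemma \ref{lem:step1ispb}. The outcome, by the same analysis as in the non-square case, is that the middle map (the $f$ of the polynomial) is a pullback of $U_1 \to I$. Applying $\wisc$ twice produces a $2$-cover base of $U_1 \to I$, which by Lemma \ref{lem:twocovpb} pulls back to a $2$-cover base for the middle map of $R_1$; then Lemma \ref{lem:twocovcoprod} yields a $2$-cover base for $R_1 + I_X$, and Theorem \ref{thm:rwinitialalg} supplies the required initial algebra. For case (2), the reduction object of the polynomial for $R_1$ is controlled by the left vertical map $U_0 \to V_0$ of the square (this is what is ``newly identified'' in the pushout definition of $R_1 f$), so the hypothesis that $U_0 \to V_0$ is a locally decidable monomorphism makes the map $f \circ k$ in the polynomial for $R_1$ a locally decidable monomorphism. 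Since locally decidable pointed polynomial endofunctors are closed under pullback and coproduct (as already noted in the proof of Theorem \ref{thm:simplecofgen}), $R_1 + I_X$ is locally decidable, and Theorem \ref{thm:initialalgps} completes the construction.

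For the strong fibredness claim, I would appeal once more to the square-analog of \cite[Theorem 5.5.2]{swanliftprob}, which reduces the problem to showing that $R_1$ itself is strongly fibred, i.e.\ that it is preserved by pullback along arbitrary maps $Y' \to Y$ (not merely along reindexing along $J' \to J$). When $V_1 \to I$ is an isomorphism, the parameter $\beta \colon V_1(i) \to Y(j)$ appearing in the universal lifting problem collapses to a single element of $Y(j)$, so the indexing object $K$ of the universal lifting problem is genuinely fibered over $Y$ rather than merely over $J$. Strong fibredness can then be checked directly from the pushout defining $R_1 f$, since each of $\sigma^\ast(U_0)$, $\sigma^\ast(U_1)$, and $\sigma^\ast(V_0)$ is stable under such pullback.

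The main obstacle I anticipate is the bookkeeping in case (1): carefully identifying the four maps $k, f, g, h$ of the polynomial with reductions underlying $R_1$ for squares, and checking that the reduction map $k$ does indeed factor so that the middle map is obtained as a pullback of $U_1 \to I$. Once this identification is made explicit (it is essentially the square-analog of Lemma \ref{lem:step1ispb}), both case (1) and the strong fibredness statement follow by the closure properties already established, and case (2) follows from the locally decidable analysis. I would not expect the ``initiality'' part itself to introduce any new difficulty beyond what is already handled by Theorems \ref{thm:rwinitialalg} and \ref{thm:initialalgps}.
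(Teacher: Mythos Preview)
Your proposal is correct and follows essentially the same approach as the paper. The paper's own proof is a one-line reference (``Similar to corollary \ref{cor:pwtoposcofibgen} and theorem \ref{thm:simplecofgen}, this time using lemma \ref{lem:step1ispolysq} and \cite[Theorem 5.3.6]{swanliftprob}'', with strong fibredness handled by \cite[Theorem 5.3.8 and Lemma 8.2.1]{swanliftprob}), and you have accurately unpacked exactly those ingredients: the reduction to initial algebras of $R_1 + I_X$, the pointed-polynomial structure via lemma \ref{lem:step1ispolysq}, the $2$-cover-base and locally-decidable closure arguments, and the strong fibredness of $R_1$ when $V_1 \to I$ is an isomorphism.
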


\begin{proof}
  Similar to corollary \ref{cor:pwtoposcofibgen} and theorem
  \ref{thm:simplecofgen}, this time using lemma
  \ref{lem:step1ispolysq} and \cite[Theorem 5.3.6]{swanliftprob}.

  For showing the rawfs is strongly fibred, we use \cite[Theorem 5.3.8
  and Lemma 8.2.1]{swanliftprob}.
\end{proof}

\section{Recovering $W$-Types from Cofibrantly Generated Awfs's}
\label{sec:non-existence-due}

In section \ref{sec:cofibr-gener-awfss} we saw that cofibrantly generated
awfs's could be constructed using $W$-types and $\wisc$. We will know
show that the assumption of the existence of $W$-types is strictly
necessary. We will show that in fact $W$-types can be
recovered from the existence of cofibrantly generated awfs's. This shows
that the results in section \ref{sec:cofibr-gener-awfss} don't hold
for the category of sets in $\czf$, even if we add $\mathbf{PAx}$, a
choice axiom which implies $\wisc$.

\begin{theorem}
  \label{thm:awfs2wtype}
  Let $\catc$ be a locally cartesian closed category with disjoint
  coproducts.
  Suppose that every monic decidable family of maps cofibrantly generates an
  awfs. Then $\catc$ has all $W$-types.
\end{theorem}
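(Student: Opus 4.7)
The plan is to reduce the construction of dependent $W$-types to that of ordinary (non-dependent) $W$-types in $\catc$, and to obtain the latter by instantiating the awfs hypothesis on the simplest possible decidable mono.

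Given any morphism $f \colon X \to Y$ in $\catc$, I will consider the first coproduct inclusion $\iota_1 \colon X \hookrightarrow X + Y$, viewed as a morphism in $\catc/Y$ with codomain structure map $[f, \mathrm{id}_Y] \colon X + Y \to Y$. Disjointness of coproducts makes $\iota_1$ a decidable monomorphism, so by hypothesis the singleton family consisting of $\iota_1$ cofibrantly generates an awfs. By the characterisation underlying Theorem \ref{thm:cofgenawfsfrominit}, this supplies an initial algebra for $R_1 + I_{X'}$ for every $X' \to Y' \to J$ in $\catc$.

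Specialise now to $J = 1$, $Y' = 1$, and $X' = 0$, so that $I_{X'}$ is isomorphic to the identity pointed endofunctor and $R_1 + I_{X'} \cong R_1$. Unfolding the type-theoretic description of $R_1$ from Theorem \ref{thm:step1isptdpoly} and computing the universal lifting problem of Definition \ref{def:ulp}, a direct calculation gives
\[
R_1 A \;\cong\; A \;+\; \sum_{y \in Y} A^{X_y}
\]
with pointed structure the left coproduct inclusion. Writing $P A = \sum_{y \in Y} A^{X_y}$ for the non-dependent polynomial endofunctor associated with $f$, an algebra for the pointed endofunctor $R_1$ is exactly a retraction of this inclusion, which is precisely a $P$-algebra; the initial $R_1$-algebra therefore coincides with the ordinary $W$-type $W_{X, Y}$. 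Hence $\catc$ has all non-dependent $W$-types.

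To pass from non-dependent to dependent $W$-types I will invoke the standard result of Gambino and Hyland \cite{gambinohylanddepw}: in any locally cartesian closed category with ordinary $W$-types, every dependent polynomial endofunctor has an initial algebra, constructed as the coherent subobject of a suitable ordinary $W$-type. Applied to $\catc$ this produces every $W$-type in the sense of Definition \ref{def:depwtype}, and completes the proof. The principal obstacle is the explicit identification $R_1 \cong \mathrm{id} + P$ in the chosen setup, which requires carefully matching the constructor, arity, and reindexing data from Theorem \ref{thm:step1isptdpoly} with the fibres $X_y$ of $f$; once this calculation is in place, the remaining arguments are either formal or are handled by the cited reduction to the dependent case.
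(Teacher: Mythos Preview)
Your approach is essentially the same as the paper's: both use the coproduct inclusion $A \hookrightarrow A + B$ over $B$ (your $\iota_1$) as the generating decidable monic family, identify the right-map structures over codomain $1$ with algebra structures for the ordinary polynomial endofunctor $P(X) = \Sigma_B\Pi_f A^\ast(X)$, and then read off the initial $P$-algebra from the factorisation of $0 \to 1$. The Gambino--Hyland reduction you append at the end is a harmless extra step; the paper simply constructs the non-dependent $W$-type and stops.

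There is, however, a genuine gap in your justification. You write that ``by the characterisation underlying Theorem~\ref{thm:cofgenawfsfrominit}'' the existence of the awfs supplies initial algebras for $R_1 + I_{X'}$. But Theorem~\ref{thm:cofgenawfsfrominit} runs in the \emph{opposite} direction: it says that a choice of initial algebras for each $R_1 + I_X$ yields the cofibrantly generated awfs, not conversely. To go from the awfs back to initial algebras you need the fact that in any awfs the functorial factorisation of $f$ exhibits $Rf$ as the \emph{free} $R$-algebra on $f$, together with the fact that a cofibrantly generated awfs is algebraically free on $R_1$, so that $R$-algebras coincide with $R_1$-algebras. The paper sidesteps this detour entirely: it computes the universal lifting problem of $\iota_0$ against an arbitrary $X \to 1$ directly, observes that its solutions are exactly $P$-algebra structures on $X$, and then notes that the awfs factorisation of $0 \to 1$ is initial among maps with codomain $1$ carrying such a solution (since it is the free $R$-algebra on the unique arrow $0 \to 1$). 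That is the step you should supply in place of the backwards appeal to Theorem~\ref{thm:cofgenawfsfrominit}.
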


\begin{proof}
  Let $f \colon A \rightarrow B$ be a morphism in $\catc$. We then
  consider the following family of morphisms.
  \begin{equation*}
    \begin{gathered}
      \xymatrix{ A \ar[dr]_f \ar@{>->}[rr]^{\iota_0} & & A + B
        \ar[dl]^{[f, 1_B]} \\
        & B & }
    \end{gathered}
  \end{equation*}
  Let $X$ be an object of $\catc$. Writing the local exponential as a
  dependent product, the universal lifting problem of $\iota_0$
  against the unique map $X \rightarrow 1$ is of the following form,
  where the top map is given by evaluation.
  \begin{equation*}
    \begin{gathered}
      \xymatrix{ \Pi_f(A^\ast(X)) \times_B A \ar[r] \ar[d] & X \ar[d] \\
        \Pi_f(A^\ast(X)) \times_B (A + B) \ar[r] & 1}
    \end{gathered}
  \end{equation*}
  Since $\catc$ is locally cartesian closed, pullback preserves
  coproduct, and so we have $\Pi_f(A^\ast(X)) \times_B (A + B) \cong
  (\Pi_f(A^\ast(X)) \times_B A) + (\Pi_f(A^\ast(X)) \times_B B)$. The
  second component is just the pullback of an identity map, so we
  deduce that the universal lifting problem is actually of the form
  below.
  \begin{equation*}
    \begin{gathered}
      \xymatrix{ \Pi_f(A^\ast(X)) \times_B A \ar[r] \ar[d] & X \ar[d] \\
        (\Pi_f(A^\ast(X)) \times_B A) + \Pi_f(A^\ast(X))) \ar[r] & 1}
    \end{gathered}
  \end{equation*}
  We deduce that solutions to the universal lifting problem correspond
  precisely to algebra structures on $X$ for the polynomial
  endofunctor $\Sigma_B \circ \Pi_f \circ A^\ast$. Therefore an
  initial algebra for the polynomial endofunctor is exactly the
  factorisation of $0 \rightarrow 1$ in the cofibrantly generated awfs.
\end{proof}

\begin{corollary}
  In $\czf + \mathbf{PAx}$ one cannot prove that cofibrantly generated
  awfs's exist for every monic decidable family of maps for the
  codomain fibration over the category of sets.
\end{corollary}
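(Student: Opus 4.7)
The plan is to argue by contraposition using Theorem~\ref{thm:awfs2wtype}. First I would verify that the category of sets in $\czf + \mathbf{PAx}$ satisfies the hypotheses of that theorem: it is locally cartesian closed (exponentials in $\czf$ follow from Subset Collection, and this extends to local exponentials, i.e.\ dependent products in slices) and has disjoint coproducts (from Pairing together with Restricted Separation). Hence Theorem~\ref{thm:awfs2wtype} is applicable, and indeed its proof can be formalised within $\czf + \mathbf{PAx}$ since it consists of constructing a particular family of morphisms, forming a universal lifting problem, and identifying its solutions with algebra structures on the polynomial endofunctor $\Sigma_B \circ \Pi_f \circ A^\ast$.

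Next, suppose for contradiction that $\czf + \mathbf{PAx}$ proved the statement that cofibrantly generated awfs's exist for every monic decidable family of maps over the codomain fibration on the category of sets. Then, by carrying out the proof of Theorem~\ref{thm:awfs2wtype} inside $\czf + \mathbf{PAx}$, one would also obtain a proof in $\czf + \mathbf{PAx}$ that for every function $f \colon A \to B$ of sets the polynomial endofunctor $\Sigma_B \circ \Pi_f \circ A^\ast$ admits an initial algebra — namely, the factorisation of $0 \to 1$ in the putative cofibrantly generated awfs. In other words, $\czf + \mathbf{PAx}$ would prove that $\set$ has all $W$-types in the categorical sense of Definition~\ref{def:depwtype}.

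The plan is then to invoke a known non-provability result, namely that $\czf + \mathbf{PAx}$ does \emph{not} prove the existence of all such categorical $W$-types in the category of sets. Together with the implication derived above, this immediately yields the corollary. The main obstacle is establishing or citing this independence; the natural strategy is to exhibit a model of $\czf + \mathbf{PAx}$ in which some specific polynomial endofunctor on sets fails to have an initial algebra (for instance, by adapting permutation or realizability constructions in the style of van den Berg, Moerdijk and Rathjen), so that the existence of the corresponding cofibrantly generated awfs fails in that model while all the background axioms continue to hold.
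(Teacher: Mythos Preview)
Your reduction via Theorem~\ref{thm:awfs2wtype} is exactly the paper's first step, and your care in checking that the hypotheses of that theorem are available in $\czf + \mathbf{PAx}$ is reasonable (the paper leaves this implicit).

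The divergence is in the final step, and here there is a genuine gap. You correctly identify that what remains is to show that $\czf + \mathbf{PAx}$ does not prove the existence of all $W$-types, but your proposed route---exhibiting a model of $\czf + \mathbf{PAx}$ in which some polynomial endofunctor lacks an initial algebra, via ``permutation or realizability constructions''---is both harder than necessary and somewhat misdirected. Realizability models of $\czf$ in fact typically \emph{do} validate $W$-types, so they are the wrong place to look; and permutation models are designed to break choice principles, whereas the obstruction to $W$-types is proof-theoretic rather than choice-related.

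The paper's argument is much shorter and avoids model construction entirely: it is a consistency-strength argument. One cites that $\czf + \mathbf{PAx}$ has the same consistency strength as $\czf$, while adding $W$-types to $\czf$ yields a theory of strictly higher consistency strength (Rathjen). If $\czf + \mathbf{PAx}$ proved the existence of all $W$-types, it would contain $\czf + W$-types as a subtheory and hence have at least its consistency strength, contradicting the equiconsistency with $\czf$. This two-line proof-theoretic argument is the missing idea in your proposal.
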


\begin{proof}
  By theorem \ref{thm:awfs2wtype} the existence of cofibrantly generated
  awfs's implies that the category of sets has $W$-types. However,
  $\czf + \mathbf{PAx}$ has the same consistency strength as $\czf$
  itself, but the addition of $W$-types leads to a strictly higher
  consistency strength (see \cite{Rathjen94}).
\end{proof}

\section{Examples of Previously Unknown Awfs's}
\label{sec:exampl-prev-unkn}

We now give some new examples of awfs's, all based on
realizability. We assume that the reader is already familiar with well
known definitions in realizability such as pca's, assemblies and
realizability and relative realizability toposes. See the reference
\cite{vanoosten} by Van Oosten for a comprehensive introduction to all
of these notions. We will use the same terminology and notation
as Van Oosten.

None of these categories admit colimits over arbitrary infinite
sequences (even countably infinite sequences).

\subsection{Kan Fibrations in the Effective Topos}
\label{sec:effective-topos}

In \cite{vdbergfrumin}, Van den Berg and Frumin considered two classes
of maps in the effective topos, $\eff$ referred to as \emph{trivial
  fibrations} and \emph{fibrations}. In \cite[Section
7.5.2]{swanliftprob}, the author showed that these classes are both
cofibrantly generated with respect to the codomain fibration, by the
following two families of maps.
\begin{equation*}
  \begin{gathered}
    \xymatrix{ 1 \ar[dr]_\top \ar[rr]^\top & & \Omega \ar@{=}[dl] \\
      & \Omega &}
  \end{gathered}
  \qquad
  \begin{gathered}
    \xymatrix{ \Omega +_1 (\Omega \times \nabla 2) \ar[dr] \ar[rr]^{\top \hat
        \times \delta_0}     
      & & \Omega \times \nabla 2
      \ar[dl]^{\pi_0} \\
      & \Omega &}      
  \end{gathered}  
\end{equation*}

In loc. cit., Van den Berg and Frumin showed that if one restricts to
the full subcategory of $\eff$ of \emph{fibrant objects} (i.e. objects
$X$ where the unique map $X \rightarrow 1$ is a fibration) then
fibrations are the right classes of a
wfs, and moreover this forms part of a model structure on the
subcategory. However, their proof relies on restricting to fibrant
objects, and doesn't apply to the entire category $\eff$.

We can now confirm that in fact, we do get awfs's on all of $\eff$,
without restricting to fibrant objects.
\begin{theorem}
  There are awfs's $(C, F^t)$ and $(C^t, F)$ on $\eff$ such that
  \begin{enumerate}
  \item A map admits an $F^t$-algebra structure if and only if it is a
    trivial fibration.
  \item A map admits an $F$-algebra structure if and only if it is a
    fibration.
  \item The awfs $(C, F^t)$ is strongly fibred (i.e. stable under
    pullback).
  \end{enumerate}
\end{theorem}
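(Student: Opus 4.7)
The proof will proceed by applying the main cofibrant generation results of Section~\ref{sec:cofibr-gener-awfss} to the two families of generators identified by the author in \cite{swanliftprob} for trivial fibrations and fibrations in $\eff$. First, I would note that $\eff$ is a $\Pi W$-pretopos satisfying $\wisc$ (under the assumption that $\wisc$ holds in the background meta-theory, as discussed in the paper's introduction), which is the hypothesis needed to invoke Corollary~\ref{cor:pwtoposcofibgen}.

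For part (1), I would apply Corollary~\ref{cor:pwtoposcofibgen} to the first family $\top \colon 1 \rightarrow \Omega$ over $\Omega$, obtaining an awfs $(C, F^t)$. By the construction of cofibrantly generated awfs's via initial algebras, the $F^t$-algebra structures on a map $f$ correspond to uniform solutions of the universal lifting problem from $\top$ against $f$, which by the characterisation in \cite[Section 7.5.2]{swanliftprob} are precisely the trivial fibration structures. The coincidence of $F^t$-algebras with trivial fibrations then follows exactly as in the argument of that section.

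For part (2), the second family involves a pushout-product $\top \hat\times \delta_0$, which is naturally presented as a square rather than a single map. I would therefore apply the theorem for squares at the end of Section~\ref{sec:lift-probl-squar} (using its first hypothesis that $\catc$ is a $\Pi W$-pretopos satisfying $\wisc$), together with Lemma~\ref{lem:step1ispolysq}, to obtain the awfs $(C^t, F)$. Again, the characterisation of $F$-algebras as fibrations is read off from the universal lifting problem, matching the description in \cite[Section 7.5.2]{swanliftprob}.

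For part (3), the strongly fibred property follows from the ``furthermore'' clause of the square theorem: in the first family, viewed trivially as a square over $I = \Omega$, the map $V_1 \rightarrow I$ is the identity on $\Omega$, so the resulting rawfs is strongly fibred. The main obstacle I anticipate is simply bookkeeping: verifying that the $F^t$- and $F$-algebra structures really match the trivial fibration and fibration structures of Van den Berg and Frumin pointwise (rather than just up to isomorphism), but this is exactly what the previously established characterisation in \cite{swanliftprob} gives us, so no new technical difficulty arises beyond chasing the definitions.
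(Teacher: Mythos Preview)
Your proposal is essentially correct and follows the same overall strategy as the paper: verify that $\eff$ is a $\Pi W$-pretopos satisfying $\wisc$ and then invoke the cofibrant-generation machinery of Section~\ref{sec:cofibr-gener-awfss}. The paper's own proof is even terser than yours: it cites Van den Berg \cite{vdbergpredtop} for the fact that $\eff$ is a $\Pi W$-pretopos with $\wisc$, applies Corollary~\ref{cor:pwtoposcofibgen} to \emph{both} generating families directly, and then cites \cite[Corollary 7.5.5]{swanliftprob} for the strongly fibred claim.

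The one place you diverge is part~(2): you route the second family through the square theorem of Section~\ref{sec:lift-probl-squar}, on the grounds that the pushout-product $\top \hat\times \delta_0$ is ``naturally presented as a square.'' This is an unnecessary detour. As displayed in the paper just before the theorem, $\top \hat\times \delta_0$ is already a single family of maps over $\Omega$, so Corollary~\ref{cor:pwtoposcofibgen} applies to it directly, with no need to invoke squares or Lemma~\ref{lem:step1ispolysq}. Your route is not wrong, but it adds a layer you then have to unwind (checking that the square-generated $F$-algebras coincide with the map-generated ones). Similarly, for part~(3) the paper simply cites \cite[Corollary 7.5.5]{swanliftprob} rather than going via the ``furthermore'' clause of the square theorem; both work, but the direct citation is cleaner.
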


\begin{proof}
  In \cite{vdbergpredtop}, Van den Berg showed that $\eff$ is a $\Pi
  W$-pretopos and satisfies $\wisc$ (there referred to as AMC). We can
  therefore construct $(C, F^t)$ and $(C^t, F)$ using corollary
  \ref{cor:pwtoposcofibgen}. We see that $(C, F^t)$ is strongly fibred
  by \cite[Corollary 7.5.5]{swanliftprob}.\todo{maintain citation to
    other paper}
\end{proof}

\begin{remark}
  In fact we can define $(C, F^t)$ in two different ways. We can
  either take the underlying lawfs to be $(C_1, F^t_1)$ together with
  a multiplication that we can add using the fact that cofibrations
  can be composed. Alternatively, we can take $(C, F^t)$ to be the
  awfs algebraically free on $(C_1, F^t_1)$.  As Gambino and Sattler
  point out in \cite[Remark 9.5]{gambinosattlerpi} these two
  definitions are not
  the same. However, both are strongly fibred
  and we end up with the same wfs in either case.
\end{remark}

\subsection{Computable Hurewicz Fibrations in the Kleene-Vesley Topos}
\label{sec:funct-real}

Recall that the \emph{function realizability topos}, $\rt(\kltwo)$ is
the realizability topos on $\kltwo$. Then $\rt(\kltwo)$ has as a
subcategory, the \emph{Kleene-Vesley topos}, $\kv$, which is defined
as the relative realizability topos
$\rt(\kltwo^\mathrm{rec}, \kltwo)$. See \cite[Section 4.5]{vanoosten}
for more details.

We can embed subspaces of $\realno^n$ into $\rt(\kltwo)$. A subspace
of $\realno^n$ is in particular a countably based $T_0$-space, which
Bauer showed in \cite{bauereqsptte} embed into $\mathbf{PER}(\kltwo)$,
which in turn embeds into $\rt(\kltwo)$. Note however, that for the
special case of subspaces of $\realno^n$, we can more explicitly
describe the embedding into $\asm(\kltwo)$. Given a subspace
$X$ of $\realno^n$, we take the underlying set of the assembly to be
$X$ itself, and we define the existence predicate, $E$, by taking
$E(x)$ to be the set of (functions encoding) Cauchy sequences of
rationals that converge to $x$, for each $x \in X$.

Hence the endpoint inclusion into the topological interval
$\delta_0 \colon 1 \rightarrow [0,1]$, can be viewed as a map in
$\rt(\kltwo)$. Moreover, since the map is evidently computable, it in
fact lies in the subcategory $\kv$.

\begin{definition}
  We say a map in $\kv$ is a \emph{computable Hurewicz fibration} if
  it has the fibred right lifting property against the following
  (trivial) family of maps.
  \begin{equation*}
    \xymatrix{ 1 \ar[rr]^{\delta_0} \ar[dr] & & [0, 1] \ar[dl] \\
      & 1 & }
  \end{equation*}
\end{definition}

Note that since this is the fibred right lifting property, it is
equivalent to having the right lifting property against the map
$\delta_0 \times X \colon X \rightarrow X \times [0,1]$, for every
object $X$ of $\kv$. This justifies the name computable Hurewicz
fibration, by analogy with Hurewicz fibrations in topology.

\begin{theorem}
  There is an awfs on $\kv$ where the maps that admit the structure of
  a right map are precisely the computable Hurewicz fibrations.
\end{theorem}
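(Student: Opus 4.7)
The plan is to apply Corollary \ref{cor:pwtoposcofibgen} to the single-element family consisting of $\delta_0 \colon 1 \rightarrow [0,1]$ over $I = 1$ in $\kv$. The characterisation of right maps is essentially built into the machinery of cofibrant generation with respect to the codomain fibration: the maps admitting an $F$-algebra structure in the resulting awfs are precisely those having the fibred right lifting property against the generating family, which by the definition given just before the theorem is exactly the class of computable Hurewicz fibrations.

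To invoke the corollary, I would verify that $\kv$ is a $\Pi W$-pretopos satisfying $\wisc$. The $\Pi W$-pretopos structure is standard for realizability toposes and transfers to the relative realizability setting $\rt(\kltwo^\mathrm{rec}, \kltwo)$ via the usual realizability interpretations of dependent products and $W$-types; in particular, $W$-types on $\kv$ can be constructed along the same lines as in $\eff$. For $\wisc$, I would adapt Van den Berg's argument from \cite{vdbergpredtop}: given a map $f$ in $\kv$, one builds a cover base by forming the assembly whose elements encode a choice of local section of $f$ together with a witnessing realizer, and this construction transfers to $\kv$ provided the combinators used lie in $\kltwo^\mathrm{rec}$.

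The main obstacle will be checking that $\wisc$ actually holds in $\kv$ rather than only in the ambient $\rt(\kltwo)$, since the restriction to the recursive sub-pca must be compatible with the construction of the cover base. However this is essentially routine: the realizers needed for combining, decoding, and witnessing the covering condition are all recursive, so the assembly of cover bases constructed in $\rt(\kltwo)$ restricts to one in $\kv$. Once these verifications are complete, the proof reduces to a direct application of Corollary \ref{cor:pwtoposcofibgen}, and the identification of the right-map algebras with computable Hurewicz fibrations is immediate from the definition of cofibrant generation over the codomain fibration.
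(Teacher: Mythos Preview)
Your overall strategy matches the paper's exactly: verify that $\kv$ is a $\Pi W$-pretopos satisfying $\wisc$, then apply Corollary \ref{cor:pwtoposcofibgen}, with the identification of right maps following from the definition of fibred cofibrant generation.

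The only difference lies in how the hypotheses on $\kv$ are verified. You propose to adapt Van den Berg's cover-base construction for ordinary realizability toposes directly to the relative realizability setting, checking by hand that the realizers involved lie in $\kltwo^{\mathrm{rec}}$. The paper instead invokes a structural reduction: by Birkedal and Van Oosten, the relative realizability topos $\kv = \rt(\kltwo^{\mathrm{rec}},\kltwo)$ can be presented as an \emph{internal} realizability topos inside $\set^{\btwo}$, and Van den Berg's result already shows that internal realizability toposes inherit the $\Pi W$-pretopos structure and $\wisc$ from the background. Since $\set^{\btwo}$ is a presheaf topos over $\set$, the hypotheses follow without any hands-on realizer bookkeeping. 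Your approach is plausible and would likely go through, but it requires reproving a special case of something already available; the paper's route is shorter and avoids the ``essentially routine'' verifications you flag as the main obstacle.
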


\begin{proof}
  It suffices to show that $\kv$ is a $\Pi W$-pretopos and satisfies
  $\wisc$. Van den Berg showed in \cite{vdbergpredtop} that this is the case
  for internal realizability toposes, as long as it holds in the
  background. However, Birkendal and Van Oosten showed in
  \cite{birkedalvanoosten} that relative realizability toposes can be
  viewed as internal realizability toposes in $\set^\btwo$, so $\kv$
  is indeed a $\Pi W$-pretopos satisfying $\wisc$. We can now apply
  corollary \ref{cor:pwtoposcofibgen}.
\end{proof}

\subsection{Cubical Assemblies}
\label{sec:cubical-assemblies}

We will construct a category of internal presheaves in $\asm(\klone)$
which we will call the category of \emph{cubical assemblies}, which
will be a realizability variant of the category of cubical sets
defined by Cohen, Coquand, Huber and M\"{o}rtberg in
\cite{coquandcubicaltt}. The definitions of Kan trivial fibration and
fibration are based on the presentation in
\cite[Section 7.5.4]{swanliftprob}\todo{maintain ref to other paper}.

First, note that we can view the free de Morgan algebra on a countable
set $\names$ as follows. We write $\dm_0(\names)$ for the set of
strings in the language of de Morgan algebras with constants from
$\names$. Then $\dm(\names)$ is the quotient of $\dm_0(\names)$ by the
appropriate equalities corresponding the de Morgan algebra axioms. We
write $\phi \equiv \psi$ if $\phi$ and $\psi$ are words that are
identified in $\dm(\names)$. Clearly there is a G\"{o}delnumbering of
$\dm_0(\names)$. Given $\phi \in \dm_0(A)$, we write the corresponding
G\"{o}delnumber as $\gn \phi$.

We define an internal category in assemblies as follows. We take the
underlying small category to be the same as for CCHM cubical
sets. That is, the full subcategory of the Kleisli category on $\dm$
with objects the finite subsets of $\names$. We then need to define
existence predicates $E_0$ and $E_1$ for the objects and
morphisms. Given a finite subset $A$ of $\names$, we define $E_0(A)$
to consist of lists $\langle a_1, \ldots, a_n \rangle$ such that
$A = \{a_1, \ldots, a_n \}$. Given a morphism
$\theta \colon A \rightarrow B$, we define $E_1(\theta)$ to consist of
triples $\langle d, c, e \rangle$, where $d$ and $c$ are codes for the
domain and codomain, and $e$ tracks the function
$A \rightarrow \dm (B)$ underlying $\theta$. That is, given $a \in A$,
$\theta \gn{a}$ is defined and equal to $\gn{\phi}$ for some $\phi$
such that $\phi \equiv \theta(a)$. We call this internal category the
\emph{cube category}.

We now define the category of cubical assemblies to be the category of
diagrams for the cube category. Note that the forgetful
functor $\Gamma \colon \asm(\klone) \rightarrow \set$ extends to a functor
from cubical assemblies to cubical sets.

We define an interval object $\intv$ as the following cubical
assembly. The underlying cubical set is the same as the interval in
CCHM cubical sets. Namely, we take $\intv(A)$ to be $\dm(A)$. We
define the existence predicate on $\intv(A)$ by taking $E([\phi])$ to
be the set consisting of $\gn{\psi}$ for $\psi$ such that
$\psi \equiv \phi$.

We define the face lattice, $F$, to be the quotient of $\intv$ by the
following equivalence relation. We define $[\phi] \sim [\psi]$ when
$\phi \equiv 1 \Leftrightarrow \psi \equiv 1$ holds in cubical
assemblies. In $\asm(\klone)$, this says that for $[\phi], [\psi] \in
F(A)$, $[\phi] \sim [\psi]$ when for every $\theta \colon
A \rightarrow B$ in the cube category, $F(\theta)(\phi) \equiv 1
\Leftrightarrow F(\theta)(\psi) \equiv 1$.

As Coquand et al remark in \cite[Section 3]{coquandcubicaltt}, free de
Morgan algebras have decidable equality. In fact the equality in
$\dm(A)$ is uniformly computably decidable over all finite subsets $A$
of $\names$, and so $\intv$ has decidable equality in
$\asm(\klone)$. 

We will check that the map $\top \colon 1 \rightarrow F$ has decidable
image. Note that since $\asm(\klone)$ does not have effective
quotients in general we need to be a little careful.

Suppose we are given an element of $F(A)$ of the form $[\phi]$. By
decidability of $\equiv$ we know that $\phi \equiv 1$ or
$\phi \not \equiv 1$. In the former case we clearly have $\phi \sim 1$
and so $[\phi] = 1$. We now show that in the latter case
$[\phi] \neq 1$. Just using the fact that the quotient is a
coequalizer and again that $\intv$ has decidable equality we can
define a map $f \colon F(A) \rightarrow 2$ such that $f([\psi]) = 1$
when $\psi \equiv 1$ and $f([\psi]) = 0$ when $\psi \not \equiv
1$. But then $f([\phi]) = 0$ and $f(1) = 1$, so we can deduce $[\phi]
\neq 1$.

In fact one can deduce that this particular quotient is effective, but
we don't need that here.

Therefore, by theorem \ref{thm:simplecofgen} there exists a (strongly
fibred) awfs cofibrantly generated by the following family of maps,
which we refer to as the awfs of Kan cofibrations and trivial
fibrations.
\begin{equation*}
  \begin{gathered}
    \xymatrix{ 1 \ar[rr]^{\top} \ar[dr]
      & & F \ar@{=}[dl] \\
      & F & }
  \end{gathered}  
\end{equation*}

Finally note that the Leibniz product $\delta_0 \hat \times \top$, is
the subobject of $F \times \intv$, which at $A$ consists of
$([\phi], [\psi])$ in $F(A) \times \intv(A)$ such that $\phi \equiv 1$
or $\psi \equiv 0$\footnote{The easiest way to show this is simply to
  verify directly that this definition satisfies the universal
  property of the pushout. In fact one can show that this map is a
  cofibration, but we won't cover this in more detail here.}.  It
follows that $\delta_0 \hat \times \top$ is also locally decidable. It
follows again by theorem \ref{thm:simplecofgen} that there is a
(fibred) awfs cofibrantly generated by the family of maps below, which
we refer to as the awfs of Kan trivial cofibrations and fibrations.
\begin{equation*}
  \begin{gathered}
    \xymatrix{ \intv +_1 F \ar[rr]^{\delta_0 \hat \times \top} \ar[dr]
      & & \intv
      \times F \ar[dl] \\
      & F & }
  \end{gathered}  
\end{equation*}

\section{Conclusion}
\label{sec:conclusion}

\subsection{Comparison With Existing Constructions of Higher Inductive Type}
\label{sec:comp-with-other}

As remarked in the introduction, $W$-types with reductions may be a
special cases of free algebra over varieties (as defined by Blass in
\cite{blassfreealg}), and of QIITs, as developed by Altenkirch,
Capriotti, Dijkstra and Forsberg in \cite{acdfqiit}. We were able to
show initial algebras can be constructed in a wide variety of
categories. For algebraic varieties, Blass observed that initial
algebras can be constructed in any topos with natural number object
satisfying the internal axiom of choice, which is a much smaller class
than the one we considered. However, the construction in section
\ref{sec:constr-init-algebr} is fairly flexible, and may lead to a
refinement of Blass' result, as in the conjecture below.
\begin{conjecture}
  Free algebras for varieties exist in any $\Pi W$-pretopos that
  satisfies $\wisc$.
\end{conjecture}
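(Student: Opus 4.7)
The plan is to adapt the construction in Section \ref{sec:constr-init-algebr-1} to algebraic varieties. A variety is specified by a signature $\Sigma$ (a family of operation symbols, each with an arity) together with a collection of equations, each given by a pair of terms over some finite context of variables. The free algebra on an object $X$ is the term algebra over $\Sigma$ generated by $X$, modulo the congruence generated by all instantiations of the equations. My strategy is to build the carrier as a dependent $W$-type with constructors for operations and for the generators $X$, then to construct the congruence as (the image of) another dependent $W$-type, and finally to take a partial quotient restricted to ``well defined'' elements, exactly as was done in Section \ref{sec:underly-object-init}.

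More concretely, I would first view $\Sigma$ together with $X$ as a polynomial in $\catc$ and apply $\wisc$ twice to obtain a $2$-cover base of the arity map. I would then form the underlying object $W$ of candidate terms as a dependent $W$-type over the cover base, precisely mirroring the definition of $W$ in Section \ref{sec:underly-object-init}: elements have the form $\cons(\sigma, i, \alpha)$ where $i$ picks out a cover $A_i$ of the arity of $\sigma$ and $\alpha$ assigns a subterm to each member of the cover. Next, I would define a relation $\sim$ on $W$ as the image of a dependent $W$-type $Q$ whose constructors comprise $\concat$ (for transitivity), $\ext$ (for function extensionality via the second layer of the $2$-cover base), and a new family of constructors, one for each equation $(t_1, t_2)$ of the variety; the latter take a valuation of the free variables of the equation by (well defined) elements of $W$ and produce a witness that the two corresponding instantiations are related. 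Symmetry would be proved, as before, by a separate induction. Having done this, the subset $W'$ of well defined elements and the quotient $W'/{\sim}$ would inherit an algebra structure for the variety, and initiality would follow by essentially the same scheme as in Section \ref{sec:proof-initiallity}: given an algebra $A$ for the variety, the unique homomorphism $W'/{\sim} \rightarrow A$ would be extracted from an inductively defined subobject $S \rightarrowtail W \times A$, with the ``respects the reduction equations'' step replaced by the hypothesis that $A$ satisfies the equations of the variety.

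The hardest part, and the place where real work is required beyond copying the earlier proof, will be formulating and handling the substitution operation that occurs in the equation-constructors of $Q$. Since equations in general have free variables, instantiating $(t_1, t_2)$ requires substituting terms of $W$ for those variables and producing a $Q$-witness relating $t_1[\vec w]$ to $t_2[\vec w]$, but substitution must interact coherently with the cover-base approximations of the arities rather than with the raw arities themselves. I expect this will need an auxiliary inductive description of ``terms of the signature in a given finite variable context'' together with a carefully defined substitution map that respects the $2$-cover bases, so that the constructors of $Q$ can be formulated in a way that fits the dependent $W$-type framework of the proof of Theorem \ref{thm:rwinitialalg}. Once this technical bookkeeping is in place, the remaining verifications --- that $\sim$ is a partial equivalence relation, that the quotient carries a variety-algebra structure satisfying the required equations, and that the initiality argument of Section \ref{sec:proof-initiallity} goes through --- should be routine adaptations of the arguments already given, and would deliver the conjecture.
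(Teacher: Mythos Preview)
The statement you are attempting to prove is labelled a \emph{Conjecture} in the paper, not a theorem; the paper does not supply a proof. Immediately after stating it, the author notes that the construction in Section~\ref{sec:constr-init-algebr} ``is fairly flexible, and may lead to a refinement of Blass' result, as in the conjecture below,'' and remarks that the same question has already been raised elsewhere. So there is no proof in the paper to compare your proposal against.

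Your plan is precisely the route the paper hints at: replay the $2$-cover-base/$W$-type/partial-equivalence-relation construction of Section~\ref{sec:constr-init-algebr-1}, replacing the single reduction equation $c(y,\alpha)=\alpha(x)$ by the arbitrary equations of a variety. You have correctly identified where the new difficulty lies. In the paper's setting the two sides of each equation are, by design, a freshly built term $c(y,\alpha)$ and one of its immediate subterms $\alpha(x)$; both are already available as elements of $W$ without any further computation. For a general variety the two sides are arbitrary terms in the signature, so one must first define, inside the $\Pi W$-pretopos and compatibly with the cover-base approximations, an interpretation of open terms and of substitution, and then show that the resulting instantiations land in the well-defined fragment $W'$ and interact correctly with $\sim$. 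None of that is carried out in the paper, and your proposal only gestures at it. Until the substitution machinery is actually built and the analogues of Lemmas~\ref{lem:welldefdlem} and~\ref{lem:rwalgstr} and of the initiality argument in Section~\ref{sec:proof-initiallity} are re-verified for arbitrary equational shapes, what you have is a plausible programme rather than a proof --- which is exactly the status the paper assigns to the statement.
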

In fact this has already been conjectured in \cite[Section
8]{vdbergpredtop}, where the question is attributed to Alex Simpson.

The question of when QIITs can be constructed remains open, although
in loc. cit., Altenkirch et al do make some progress towards a
solution. The technique used in section \ref{sec:constr-init-algebr}
might also be helpful here.

In \cite{lumsdaineshulmanhit}, Lumsdaine and Shulman give a very
general approach to the semantics of higher inductive types in
homotopy type theory. Although the set up is quite different, the
problem of constructing the higher inductive types turns out to be
quite similar to the problems we saw in this paper. For this Lumsdaine
and Shulman use some general transfinite constructions due to Kelly
\cite{kellytransfinite}. Unfortunately this approach is not suitable
for the examples we consider here, as we discuss further in the next
section.

\subsection{Other Approaches to the Construction of Initial Algebras}
\label{sec:other-appr-constr}

In section \ref{sec:constr-init-algebr-1} we gave a relatively direct
proof, in place of an application of existing results from
literature. The reader might wonder why this is the case.

A commonly used approach to constructing initial algebras is to use a
transfinite construction. Following Garner's small object argument
\cite{garnersmallobject}, we might try to use one of the general
theorems of Kelly from \cite{kellytransfinite}. However, such
constructions have the disadvantage that they make essential use of
transfinite colimits of ordinal indexed sequences. This means they
will not work for general elementary toposes, which need not be
cocomplete. This is critical here, because our examples are
based on realizability toposes, which are certainly not cocomplete.

It is also difficult to simply carry out a similar transfinite
construction internally in the $\Pi W$-pretopos, since it is unclear
how to formulate ordinals in the internal language in way that
the set theoretic arguments can be easily transferred.

Another possible approach would be to use an internal version of the
special adjoint functor theorem as developed by Day in
\cite{dayadjointfun} or Par{\'e} and Schumacher in
\cite{pareschumacheradjfun}. In fact Par{\'e} and Schumacher indicate
in \cite[Section V.2]{pareschumacheradjfun} how their result can be
used to construct free algebras of certain endofunctors. However, it
is unclear how to show that the pointed endofunctors here satisfy the
necessary conditions to apply the internal special adjoint functor
theorem. Indeed in the paragraph at the end of loc cit. Par{\'e} and
Schumacher remark that the addition of equations makes things more
problematic and suggest using in this case the more powerful results
of Rosebrugh in \cite{rosebrughcoequalizers}. However, Rosebrugh's
proofs apply only to internal toposes of sheaves inside toposes
satisfying the axiom of choice. This again would eliminate our
examples based on realizability. Blass proved in \cite{blassfreealg}
that some form of the axiom of choice really is necessary for
Rosebrugh's result to hold, although like with our results it may be
possible to adapt Rosebrugh's proofs to use a weak form of choice such
as $\wisc$. There is also the issue that the techniques of
Rosebrugh and of Par{\'e} and Schumacher make heavy
use of impredicative notions such as the subobject classifier and the
assumptions of well poweredness and cowell poweredness, and will thus
not apply to $\Pi W$-pretoposes without further work.

\subsection{Directions for Future Work}
\label{sec:direct-future-work}

\subsubsection{Is Choice Really Necessary?}
\label{sec:choice-really-necess}

In our construction of arbitrary $W$-types with reductions in a
$\Pi W$-pretopos we relied on the axiom $\wisc$. It's natural to ask
whether $\wisc$ was really necessary, or whether there's a way to
construct $W$-types with reductions without using any choice.

We saw in section \ref{sec:w-types-with-1} that using classical logic
we can derive all $W$-types with reductions from $W$-types without
using any choice. It might be possible to generalise this result to
all categories of internal presheaves in a boolean topos.

However, we conjecture that in general there are toposes where
some form of choice is strictly necessary, even just for monic
polynomials with reductions.
\begin{conjecture}
  \begin{enumerate}
  \item There is a topos with natural number object with a monic
    polynomial with reductions that does not have an initial algebra.
  \item It is consistent with $\mathbf{IZF}$ that there is a monic
    polynomial with reductions in the category of sets that does not
    have an initial algebra.
  \end{enumerate}
\end{conjecture}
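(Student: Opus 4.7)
The plan is to design a monic polynomial with reductions whose initial algebra, if it existed, would encode a choice principle known to fail in a carefully chosen topos (or model of $\mathbf{IZF}$). The strategy mirrors the construction of section~\ref{sec:constr-init-algebr-1} in reverse: the cover bases there were introduced precisely to lift a map $\alpha \colon X_y \to W/{\sim}$ to $W$ before applying the algebra structure, so I will hunt for a polynomial in which this lifting problem is both forced by initiality and uniformly unsolvable in the ambient setting.

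For part (1), I would work in a topos $\catc$ in which $\wisc$ provably fails, for instance one of the large sheaf or realizability toposes studied by Roberts and Van den Berg. Fix a surjection $e \colon A \twoheadrightarrow B$ in $\catc$ admitting no cover base. Take $Z := B$, $Y := B$ (a single constructor over each $b$), and, for each $b$, arity $X_{y_b} := A_b$, with reindexing $h \colon X \to Z$ sending $A_b$ constantly to $b$. The reduction subobject $R \rightarrowtail X$ is to be chosen as a carefully selected decidable subfamily so that monicness of $k$ holds but so that any algebra structure on a family $(T_b)_{b \in B}$ unfolds to a $b$-uniform choice of an element of $T_b$ from an $A_b$-indexed sequence in $T$. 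The hope is that initiality then forces this choice to be coherent across all $B$-indexed contexts, from which a cover base for $e$ can be extracted, contradicting the hypothesis.

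For part (2), I would transfer the same construction into a symmetric submodel of an $\mathbf{IZF}$-forcing extension in which a specific surjection fails to admit a cover base; several such models are known, including Fraenkel--Mostowski-style permutation models adapted to $\mathbf{IZF}$. One then has to check that the usual $W$-types and quotients available in $\mathbf{IZF}$ are insufficient to assemble the initial algebra. The main obstacle, and where I expect the real work to lie, is precisely this monicness condition: with each constructor reducing at most once, the reduction equations themselves cannot directly impose axiom-of-choice-style constraints. The plan is to exploit the interplay between the reindexing map $h$ and the unique reductions so that the required choice is driven by initiality rather than by the relations. I expect that an appropriate adaptation of theorem~\ref{thm:awfs2wtype}, which recovers $W$-types from cofibrantly generated awfs's, can be strengthened to recover a cover base from a hypothetical initial algebra of the proposed polynomial, closing the argument by contradiction in both parts simultaneously.
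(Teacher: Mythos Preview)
The statement you are attempting to prove is a \emph{conjecture} in the paper, listed under ``Directions for Future Work''; the paper offers no proof, only the remark that by theorem~\ref{thm:rwinitialalg} any witnessing topos must violate $\wisc$, and by theorem~\ref{thm:initialalgps} it cannot be a category of internal presheaves over a boolean topos. There is therefore no paper proof to compare your proposal against.

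As for the proposal itself, the overall strategy---encode a choice principle into the existence of an initial algebra and then work in a model where that principle fails---is reasonable, but what you have written is a plan rather than a proof, and the one concrete choice you make is self-defeating. You propose taking $R \rightarrowtail X$ to be a ``carefully selected decidable subfamily''. But by proposition~\ref{prop:decpolyequivs}, a monic polynomial with reductions in which $f \circ k$ has decidable image is \emph{decidable}, and the paper shows (section~\ref{sec:decid-locally-deci} and the construction in section~\ref{sec:constr-init-algebr-2}) that decidable polynomials with reductions always have initial algebras, since they reduce directly to ordinary dependent $W$-types. So any decidable choice of $R$ immediately produces an initial algebra without any appeal to choice, and your construction cannot witness the conjecture. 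You correctly identify the real obstacle yourself: with $f \circ k$ monic each constructor reduces at most once, so the reduction equations impose no nontrivial identifications among distinct subterms, and it is genuinely unclear how initiality alone can force a cover-base-style uniform choice. Your suggestion to strengthen theorem~\ref{thm:awfs2wtype} is in the right spirit, but that theorem recovers only plain $W$-types (which exist in any $\Pi W$-pretopos regardless of choice), and you give no indication of what additional structure an initial algebra for a \emph{monic} polynomial with reductions would yield beyond that.
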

Note that by theorem \ref{thm:rwinitialalg} we know that $\wisc$ has
to fail in the above conjecture. Also by theorem
\ref{thm:initialalgps} we know that the topos cannot be a category of
internal presheaves over a boolean topos (and in particular cannot be
boolean itself).

\subsubsection{Applications to the Semantics of Homotopy Type Theory}
\label{sec:appl-high-induct}

The main aim of this work is towards the semantics of homotopy type
theory and in particular better understanding and generalising the
cubical set model of type theory. We have already seen one aspect of
this, which is that $W$-types with reductions can be used to construct
awfs's where $R$-algebra structures correspond to Kan filling
operators (which in turn are used in the interpretation of dependent
types). We note that in fact we don't need all $W$-types with
reductions in order to do this, but only those where the map
$f \circ k$ in \eqref{eq:27} is a cofibration (assuming cofibrations
are closed under coproduct and pullback). We'll refer to such
polynomials with reductions as \emph{cofibrant}.

Cofibrant $W$-types with reductions may also have further applications
to the semantics of type theory. In \cite{coquandcubicaltt}, Coquand
et al implement higher inductive types by freely adding an
$\mathsf{hcomp}$ operator to a type. This can be seen as a kind of
weak fibrant replacement that can be phrased as a cofibrantly
generated rawfs, as we developed in section
\ref{sec:lift-probl-squar}. An important point is that this
construction is stable under pullback, which corresponds to our notion
of strongly fibred rawfs.  We again notice that we
only need cofibrant $W$-types with reductions.

The author hopes to develop these ideas further in a future paper. The
following conjecture illustrates the kind of result expected.
\begin{conjecture}
  Let $\catc$ be a topos with natural number object. Suppose further
  that $\catc$ satisfies all of the axioms considered by Orton and
  Pitts in \cite{pittsortoncubtopos}. Suppose further that initial
  algebras exist for all cofibrant polynomials with reductions. Then
  pushouts, $n$-truncations, set-quotients, suspensions and
  $n$-spheres can be implemented in the resulting CwF.
\end{conjecture}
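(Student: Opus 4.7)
The plan is to implement each of the listed higher inductive types in two stages. First, I would build a \emph{raw} inductive type as an initial algebra of a cofibrant polynomial with reductions that encodes the point constructors together with path constructors, the latter taking values parameterised by the interval $\intv$, with reductions enforcing the prescribed boundary behaviour at $\delta_0, \delta_1 : 1 \to \intv$. Second, I would apply a fibrant replacement obtained by cofibrantly generating a rawfs as in section \ref{sec:lift-probl-squar}, which itself uses only cofibrant polynomials with reductions. The Orton--Pitts axioms guarantee that cofibrations are closed under coproduct, pullback, composition and Leibniz product with endpoint inclusions, ensuring that all the polynomials produced in either stage stay inside the cofibrant class assumed in the hypothesis.

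For the specific HITs the raw data is straightforward. A pushout of $B \xleftarrow{f} A \xrightarrow{g} C$ arises from constructors for $B$ and $C$ together with a path constructor $\mathtt{push}(a,i)$ indexed by $A \times \intv$, with reductions at the two endpoints identifying $\mathtt{push}(a, \delta_0)$ with $\mathtt{inl}(f(a))$ and $\mathtt{push}(a, \delta_1)$ with $\mathtt{inr}(g(a))$; the underlying $f \circ k$ is the boundary inclusion $A + A \hookrightarrow A \times \intv$, which is a cofibration. Suspensions are the special case with $B = C = 1$, and the $n$-spheres follow by iteration starting from $\btwo = S^0$. Set-quotients by a relation $r_0, r_1 : R \rightrightarrows A$ are obtained from the same template applied to $R \rightrightarrows A$ in place of $f, g$. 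For $n$-truncations I would use a hub-and-spokes constructor: for each map $u : S^{n+1} \to X$ add a hub element together with a path constructor whose boundary reduces along $u$, where $S^{n+1}$ has already been constructed at a previous stage, so that the relevant $f \circ k$ is a coproduct of face inclusions indexed by $u$ and is cofibrant by closure properties.

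The second stage attaches an $\mathsf{hcomp}$ operator via the cofibrantly generated rawfs for a square with vertical leg $(\intv + F) \hookrightarrow \intv \times F$, as in section \ref{sec:lift-probl-squar}. This rawfs exists in the present setting because the leg is itself a cofibration and is locally decidable monic with respect to the face lattice, so that the pointed polynomial generated by step one is cofibrant; strong fibredness then ensures the fibrant replacement is stable under substitution, giving the CwF structure required to interpret the HIT. Composing the two stages yields, for each HIT in the list, a fibrant type together with introduction and non-dependent elimination rules coming directly from initiality of the raw polynomial algebra.

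The main obstacle, as in all semantic treatments of HITs, will be dependent elimination. Initiality only produces a recursor, and to upgrade to an induction principle one needs that fibred algebras over the fibrant replacement admit an induced algebra structure whose initial map is dependent. This step relies on the cofibrancy of $f \circ k$ through a Frobenius-style diagonal-filling argument, together with the Orton--Pitts axioms that ensure dependent products preserve fibrations. A secondary difficulty is strict stability under substitution: the initial algebras obtained from theorem \ref{thm:initialalgps} and its variants are only stable up to coherent isomorphism, so either a Hofmann-style strictification or a direct, fibrewise construction of the polynomial data, analogous to the presheaf treatment in section \ref{sec:simpl-categ-presh}, will be required.
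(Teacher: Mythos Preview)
The statement you are attempting to prove is explicitly a \emph{conjecture} in the paper, not a theorem: the paper offers no proof, and indeed says immediately before it that ``The author hopes to develop these ideas further in a future paper.'' So there is no proof in the paper to compare your proposal against.

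That said, your two-stage outline (raw inductive type as an initial algebra of a cofibrant polynomial with reductions, followed by a strongly fibred fibrant replacement via the rawfs of section \ref{sec:lift-probl-squar}) is precisely the informal motivation the paper gives in the paragraphs preceding the conjecture, including the observation that the Coquand--Huber--M\"{o}rtberg $\mathsf{hcomp}$ construction is an instance of the strongly fibred rawfs and that only cofibrant $W$-types with reductions are needed. In that sense your plan is faithful to what the paper sketches.

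However, you have not closed the gap that makes this a conjecture rather than a theorem, and you say so yourself. The two obstacles you name --- upgrading the recursor coming from initiality to a genuine dependent eliminator, and obtaining strict stability under substitution in the resulting CwF --- are exactly the missing pieces. Your paragraph on dependent elimination gestures at a ``Frobenius-style diagonal-filling argument'' but does not carry it out, and the strictification issue is left entirely open. Until those are resolved, what you have is a plausible strategy matching the paper's own informal discussion, not a proof.
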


\subsubsection{Algebraic Model Structures on Realizability Toposes}
\label{sec:algebr-model-struct}

In section \ref{sec:exampl-prev-unkn} we saw three examples of awfs's
based on realizability. It's natural to ask whether these in fact form
part of algebraic model structures (as defined by Riehl in
\cite{riehlams}). We conjecture that in fact this is possible.

Firstly, by generalising results by Sattler in
\cite{sattlermodelstructures} the author expects it will be possible
to prove the following conjectures.
\begin{conjecture}
  The two awfs's in section \ref{sec:effective-topos} form part of an
  algebraic model structure on the effective topos.
\end{conjecture}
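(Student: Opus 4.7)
The plan is to follow the general strategy pioneered by Sattler in \cite{sattlermodelstructures} for cubical and simplicial presheaf models, adapting each step to the setting of $\eff$. An algebraic model structure requires the two awfs's $(C,F^t)$ and $(C^t,F)$ together with a comparison morphism of awfs's $\xi \colon (C^t,F) \to (C,F^t)$, plus a class of weak equivalences satisfying 2-out-of-3 such that the trivial fibrations are exactly the fibrations that are weak equivalences, and similarly for trivial cofibrations. The awfs's themselves are given by the preceding section, so the real content lies in constructing the weak equivalences and verifying their good behaviour.

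The key intermediate steps I would carry out, in order, are the following. First, verify the \emph{Frobenius property}: the pullback of a trivial fibration along a fibration is again a trivial fibration. Since $(C,F^t)$ is strongly fibred this should reduce to an internal argument about lifting $\top \colon 1 \to \Omega$ against fibrations. Second, define weak equivalences via a fibrant replacement functor obtained from the awfs $(C^t,F)$: a map $f$ is a weak equivalence if its image under a suitable cylinder/path object construction induces a trivial fibration between fibrant replacements, or equivalently (once the machinery is in place) is a homotopy equivalence in the internal logic after fibrant replacement. Third, prove the \emph{equivalence extension property}, which in Sattler's framework is what gives 2-out-of-3 and the coincidence of the two characterisations of trivial cofibrations. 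Fourth, construct the comparison morphism $\xi$: this reduces to verifying that each generating trivial cofibration in the definition of $(C^t,F)$ admits a cofibration structure, i.e.\ solves lifting problems against trivial fibrations, which can be checked by a direct internal computation using the specific form of $\top \hat{\times} \delta_0$.

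The main obstacle will be the equivalence extension property in the non-fibrant setting. Van den Berg and Frumin restricted to fibrant objects in \cite{vdbergfrumin} precisely because the naive arguments for these properties require the ambient object to admit Kan filling, and much of Sattler's machinery implicitly assumes working in a type-theoretic universe of fibrant types. Here we want an algebraic model structure on all of $\eff$, so we must handle potentially non-fibrant $X$, $Y$ in the relevant diagrams. The strategy is to use strong fibredness of $(C,F^t)$ to transport equivalence data along arbitrary maps, and to exploit the fact that, even when $X$ is not fibrant, the relevant path- and cylinder-objects can be built using the $F$-algebra structures on their defining data (e.g.\ $\intv$-indexed exponentials). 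I expect that combining strong fibredness with a careful internal formulation in the style of Orton--Pitts will let one port Sattler's arguments essentially verbatim.

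Once these ingredients are in place, the algebraic model structure axioms follow formally. The underlying wfs's give the factorisation and lifting axioms; 2-out-of-3 comes from the theory of weak equivalences set up above; and the coherence data required for an algebraic (as opposed to merely ordinary) model structure is supplied by the strongly fibred structure on $(C,F^t)$ together with the comparison morphism $\xi$. The construction of the pushout-product and pullback-hom structure needed to make everything interact in the Riehl sense \cite{riehlams} should then be routine, since all the ingredients are pointed polynomial endofunctors whose initial algebras we have already shown to exist in $\eff$.
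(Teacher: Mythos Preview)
The statement you are attempting to prove is labelled a \emph{conjecture} in the paper and is not proved there; the only thing the paper says is that it ``expects it will be possible to prove'' it ``by generalising results by Sattler in \cite{sattlermodelstructures}.'' So there is no paper proof to compare against. Your outline is in the same spirit as the paper's suggestion---follow Sattler---but it remains an outline, not a proof, and the paper itself does not claim the result.

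That said, two points in your proposal deserve correction. First, your formulation of the Frobenius property is wrong: you write ``the pullback of a trivial fibration along a fibration is again a trivial fibration,'' but trivial fibrations are stable under \emph{all} pullbacks in any setting, simply because they are defined by a right lifting property. The Frobenius condition that actually does work in Sattler's argument is that pushout-products of cofibrations with trivial cofibrations are trivial cofibrations (equivalently, that pullback-exponentials of cofibrations against fibrations are fibrations), and this is not automatic. Second, your treatment of the equivalence extension property is where the real difficulty lies, and you acknowledge this, but your proposed fix---``combining strong fibredness with a careful internal formulation in the style of Orton--Pitts''---is not an argument. Orton--Pitts work entirely inside the type theory of fibrant types, and strong fibredness of $(C,F^t)$ alone does not obviously supply what is needed over non-fibrant bases; this is precisely why van den Berg and Frumin restricted to fibrant objects. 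Until that step is actually carried out, the conjecture remains open, which is exactly the status the paper gives it.
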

\begin{conjecture}
  The two awfs's in section \ref{sec:cubical-assemblies} form part of an
  algebraic model structure on the category of cubical assemblies.
\end{conjecture}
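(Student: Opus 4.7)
The plan is to adapt Sattler's construction from \cite{sattlermodelstructures} of an algebraic model structure on cubical sets, transferring each step to the setting of cubical assemblies. The starting data is already in place: the two awfs's of Kan cofibrations with trivial fibrations, and Kan trivial cofibrations with fibrations, constructed in section \ref{sec:cubical-assemblies} via theorem \ref{thm:simplecofgen}. What remains is to exhibit them as part of an algebraic model structure in Riehl's sense \cite{riehlams}, which requires a morphism of awfs's between them plus verification that the underlying wfs's determine a model structure in the ordinary sense.

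First I would establish the structural properties on which Sattler's proof rests. The Frobenius property, which in our algebraic setting says that pushout-product with a cofibration preserves algebraically trivial cofibrations, should be checked by a direct calculation on the generating family $\delta_0 \hat\times \top$ and its interaction with the generator $\top \colon 1 \to F$; decidability of $\equiv$ in $\dm$ is what makes this calculation go through constructively. Next, exponentiation with $\intv$ should give a functorial path object on fibrations, using that $\intv$ carries connections and reversals as a de Morgan algebra; these transfer from CCHM cubical sets essentially unchanged since $\dm$ is a computable structure, so the connection algebra is automatically present in $\asm(\klone)$. Weak equivalences would then be defined, for instance as maps inducing equivalences on fibrant replacements, and one would verify the 2-out-of-3 and retract axioms together with the identification of trivial cofibrations and trivial fibrations as the intersections of cofibrations, resp. fibrations, with weak equivalences. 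The required morphism of awfs's would be obtained from the universal property of the cofibrantly generated factorisations.

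The main obstacle I expect is the Frobenius property together with the equivalence extension property, both of which in Sattler's development exploit features of the internal logic of a presheaf topos that are weakened in the realizability setting. Cubical assemblies form a category of presheaves over an internal category in $\asm(\klone)$, and although this retains much of the structure of cubical sets, arguments that implicitly use effective quotients or unrestricted function spaces must be reformulated constructively. A secondary difficulty is fibrant replacement: the CCHM construction proceeds by a transfinite colimit producing an $\mathsf{hcomp}$ operator, but here transfinite colimits are unavailable, so one must instead obtain fibrant replacement as a cofibrantly generated rawfs via the machinery of section \ref{sec:lift-probl-squar} applied to a suitable square built from $\delta_0 \hat\times \top$, and then verify that the resulting fibrant replacement preserves and reflects the appropriate notion of equivalence.
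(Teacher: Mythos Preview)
The statement you are attempting to prove is presented in the paper as a \emph{conjecture}, not a theorem: the paper gives no proof. The only hint the paper offers is the sentence immediately preceding the conjecture, namely that ``by generalising results by Sattler in \cite{sattlermodelstructures} the author expects it will be possible to prove'' it. Your proposal follows exactly this suggested route, so in that sense it is aligned with the paper's expectations; but there is no proof in the paper for you to be compared against.

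As a plan toward resolving the conjecture, your outline is reasonable and identifies the right pressure points (Frobenius, equivalence extension, fibrant replacement without transfinite colimits). A few remarks. First, cubical assemblies, being internal presheaves in a locally cartesian closed category, are themselves locally cartesian closed, so ``unrestricted function spaces'' are not an obstacle; the genuine worry is the absence of effective quotients in $\asm(\klone)$, though the constructions in section~\ref{sec:simpl-categ-presh} were specifically designed to avoid needing them. Second, your idea of obtaining fibrant replacement via the rawfs machinery of section~\ref{sec:lift-probl-squar} is exactly what the paper itself gestures at in section~\ref{sec:appl-high-induct}, where the $\mathsf{hcomp}$ construction of \cite{coquandcubicaltt} is described as a strongly fibred cofibrantly generated rawfs. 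Third, the real work---and the reason this remains a conjecture---is carrying Sattler's equivalence extension and glueing arguments through in a setting where one cannot appeal to the full internal logic of a Grothendieck topos; your sketch acknowledges this but does not resolve it, which is appropriate given that the paper does not either.
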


The status of the example in $\kv$ is less clear, but by analogy with
the well known model structure on topological spaces by Str{\o}m
\cite{stromhcishc}, the following conjecture might also be true.
\begin{conjecture}
  The awfs in section \ref{sec:funct-real} forms the trivial
  cofibrations and fibrations part of an algebraic model structure on
  the Kleene-Vesley topos.
\end{conjecture}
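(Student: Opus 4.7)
The plan is to generalize Str\o{}m's \cite{stromhcishc} model structure on topological spaces to the setting of $\kv$. I would define a morphism in $\kv$ to be a \emph{weak equivalence} if it is a homotopy equivalence with respect to the interval $[0,1]$ viewed as an assembly. The conjecture then decomposes into: (i) constructing a second awfs $(C, F^t)$ on $\kv$ whose right class consists of computable Hurewicz fibrations that are also weak equivalences, (ii) exhibiting a comparison morphism of awfs's from the given $(C^t, F)$ to $(C, F^t)$, and (iii) verifying two-out-of-three for weak equivalences.

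For (i), I would imitate Str\o{}m's construction using the mapping path space $Nf = X \times_Y Y^{[0,1]}$, which exists in $\kv$ since the category is cartesian closed and $[0,1]$ is exponentiable. Any $f \colon X \to Y$ factors as $X \to Nf \to Y$, where the first map admits a computable strong deformation retraction and the second is a computable Hurewicz fibration; a dual mapping cylinder factorization should yield the required $(C, F^t)$ factorization. The algebra structures are then essentially tautological from the universal properties of the path space and cylinder. For (iii), the standard diagrammatic proofs of two-out-of-three for homotopy equivalences carry over unchanged. The identification of trivial fibrations with fibrations that are weak equivalences requires a computable version of Str\o{}m's argument: a Hurewicz fibration that is a homotopy equivalence admits a section by a direct lifting argument, and this section is a strong deformation retract via a further lifting against a suitable Leibniz product of $\delta_0$. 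The morphism of awfs's in (ii) should then follow from the concrete form of these factorizations.

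The main obstacle will be the constructivity of Str\o{}m's arguments. Classical topology freely uses Tietze extensions, partitions of unity, and case analysis on closed subsets, none of which have straightforward computable analogues. The saving grace is that the relative realizability structure of $\kv$ (with $\kltwo^{\mathrm{rec}}$ sitting inside $\kltwo$) is designed precisely to make continuous/computable operations available uniformly, and that Str\o{}m's key constructions (the mapping path space and lifting of homotopies against fibrations) are cartesian-closed and diagrammatic. The real heart of the problem is whether the specific retraction arguments establishing ``fibration plus homotopy equivalence equals trivial fibration'' can be carried out computably in $\kv$; a careful audit of Str\o{}m's original proofs to isolate their genuinely classical content appears to be the essential preliminary step.
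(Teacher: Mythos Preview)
The statement you are addressing is labelled as a \emph{conjecture} in the paper, appearing in the section on directions for future work. The paper does not prove it; the only remark accompanying it is the analogy with Str{\o}m's model structure on topological spaces. There is therefore no proof in the paper to compare your proposal against.

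Your proposal is a reasonable outline of a research strategy, and it is essentially the one the paper itself hints at by invoking the analogy with \cite{stromhcishc}. You also correctly identify the central obstacle: Str{\o}m's arguments involve constructions (closed subsets, extension theorems, case analyses) whose computability in $\kv$ is far from clear. But as written your proposal is not a proof---it is a plan, and you explicitly flag its incompleteness in the final paragraph. In particular, the step ``fibration plus homotopy equivalence implies trivial fibration'' is exactly where Str{\o}m's original argument is delicate even classically (see, e.g., the subtleties later clarified by Barthel and Riehl), and you would need to supply a genuinely new argument to make it go through computably. Until that is done, the statement remains open, which is consistent with its status in the paper.
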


\subsection*{Acknowledgements}
\label{sec:acknowledgements}

I'm grateful to Benno van den Berg for many helpful discussions and
suggestions while developing this work.

\bibliographystyle{abbrv}
\bibliography{mybib}{}

\end{document}